\definecolor{darkred}{RGB}{139,0,0}
\definecolor{darkgreen}{RGB}{0,100,0}
\definecolor{darkmagenta}{RGB}{139,0,139}
\definecolor{darkpurple}{RGB}{110,0,180}
\definecolor{darkblue}{RGB}{40,0,200}
\definecolor{darkorange}{RGB}{255,140,0}
\newcommand{\jd}[1]{{\color{darkred}{#1}}}
\newcommand{\beqn}{\begin{equation}}
\newcommand{\eeqn}{\end{equation}}
\newcommand{\fa}{{\mathfrak a}}
\newcommand{\ff}{{\mathfrak f}}
\def\N{\mathbb{N}}
\def\IC{\mathbb{C}}
\def\IN{\mathbb{N}}
\def\IR{\mathbb{R}}
\newcommand{\C}{\mathbb{C}}
\newcommand{\LinFun}{\mathscr{O}}  
\newcommand{\Gc}{\mathscr{P}} 
\newcommand{\calD}{\mathcal{D}}
\newcommand{\cE}{\mathcal{E}}
\newcommand{\cF}{\mathcal{F}}
\newcommand{\cR}{\mathcal{R}}
\newcommand{\cT}{\mathcal{T}}
\newcommand{\cO}{\mathcal{O}}
\newcommand{\cX}{\mathcal{X}}
\newcommand{\bcX}{\mathcal{X}}
\newcommand{\bcY}{\mathcal{Y}}
\newcommand{\cY}{\mathcal{Y}}
\newcommand{\eps}{\varepsilon}
\newcommand{\satop}[2]{\stackrel{\scriptstyle{#1}}{\scriptstyle{#2}}}
\newcommand{\bsalpha}{{\boldsymbol{\alpha}}}
\newcommand{\bsbeta}{\boldsymbol{\beta}}
\newcommand{\bsgamma}{{\boldsymbol{\gamma}}}
\newcommand{\bsrho}{{\boldsymbol{\rho}}}
\newcommand{\bstau}{\boldsymbol{\tau}}
\newcommand{\bsnu}{{\boldsymbol{\nu}}}
\newcommand{\bsell}{{\boldsymbol{\ell}}}
\newcommand{\bsb}{{\boldsymbol{b}}}
\newcommand{\bsq}{{\boldsymbol{q}}}
\newcommand{\bsk}{{\boldsymbol{k}}}
\newcommand{\bsd}{\boldsymbol{d}}
\newcommand{\bsy}{{\boldsymbol{y}}}
\newcommand{\bsz}{{\boldsymbol{z}}}
\newcommand{\bsS}{\boldsymbol{S}}
\newcommand{\bsU}{\boldsymbol{U}}
\newcommand{\bsV}{\boldsymbol{V}}
\newcommand{\bsW}{\boldsymbol{W}}
\newcommand{\bsX}{\boldsymbol{X}}
\newcommand{\bsY}{\boldsymbol{Y}}
\newcommand{\rd}{\mathrm{d}}
\newcommand{\bbC}{\mathbb{C}}
\newcommand{\bbZ}{\mathbb{Z}}
\newcommand{\bbN}{\mathbb{N}}
\newcommand{\calO}{\mathcal{O}}
\newcommand{\calW}{\mathcal{W}}
\newcommand{\cL}{\mathcal{L}}
\newcommand{\setu}{\mathrm{\mathfrak{u}}}
\newcommand{\setv}{\mathrm{\mathfrak{v}}}
\newcommand{\setw}{\mathrm{\mathfrak{w}}}
\newcommand{\KL}{Karhunen-Lo\`eve }
\newcommand{\setD}{{\mathfrak{D}}}
\newcommand{\mask}[1]{{}}
\newcommand{\bszero}{{\boldsymbol{0}}}
\numberwithin{equation}{section}
\theoremstyle{plain}
  \newtheorem{theorem}{Theorem}[section]
  \newtheorem{lemma}{Lemma}[section]
  \newtheorem{corollary}{Corollary}[section]
  \newtheorem{proposition}{Proposition}[section]
  \newtheorem{assumption}{Assumption}
\theoremstyle{definition}
   \newtheorem{definition}{Definition}[section]
   \newtheorem{remark}{Remark}[section]
\newcommand{\bsOmega}{{\boldsymbol{\Omega}}}
\newcommand{\bsone}{{\boldsymbol{1}}}
\newcommand{\Z}{\mathbb{Z}}
\newcommand{\calS}{\mathcal S}
\newcommand{\be}{\begin{equation}}
\newcommand{\ee}{\end{equation}}
\newcommand{\ba}{\begin{array}}
\newcommand{\ea}{\end{array}}
\newcommand{\beas}{\begin{eqnarray*}}
\newcommand{\eeas}{\end{eqnarray*}}
\newcommand{\bea}{\begin{eqnarray}}
\newcommand{\eea}{\end{eqnarray}}
\DeclareMathOperator{\supp}{supp}
\title{Higher order Quasi-Monte Carlo integration for 
       \\
       holomorphic, parametric operator equations\thanks{
The research of JD was supported under Australian Research Council's Discovery Projects funding scheme (project number  DP150101770), the research of QLG was supported under Australian Research Council's Discovery Projects funding scheme (project number  DP120101816) and the 
work of CS was supported in part by the European Research Council (ERC) under grant 
AdG 247277, and by the Swiss National Science Foundation (SNF) under grants
SNF 200021-159940 and SNF 200021-149819.
}
\author{
Josef Dick, Quoc T. Le Gia and Christoph Schwab}
}
\date{\today}
\begin{document}
\maketitle
\begin{abstract}
We analyze the convergence of higher order
Quasi-Monte Carlo (QMC) quadratures
of solution-functionals 
to countably-parametric, nonlinear operator equations
with distributed uncertain parameters taking values in a
separable Banach space $X$ admitting an unconditional Schauder basis.

Such equations arise in numerical uncertainty quantification
with random field inputs. 
Unconditional bases of $X$ render the random inputs
and the solutions of the forward problem countably parametric,
deterministic.
We show that these parametric solutions belong 
to a class of weighted Bochner 
spaces of functions of countably many variables,
with a particular structure of the QMC quadrature weights:
up to a (problem-dependent, and possibly large) 
finite dimension, product weights can be used, 
and beyond this dimension,
weighted spaces with so-called SPOD weights recently
introduced in [F.Y.~Kuo, Ch.~Schwab, I.H.~Sloan, 
Quasi-Monte Carlo finite element methods for a class 
of elliptic partial differential equations 
with random coefficients. SIAM J. Numer. Anal., 50, 3351--3374, 2012.] 
can be used to describe the solution regularity.
The regularity results in the present paper 
extend those in [J. Dick, F.Y.~Kuo, Q.T.~Le Gia, D.~Nuyens, Ch.~Schwab, 
Higher order QMC (Petrov-)Galerkin discretization for parametric operator equations. SIAM J. Numer. Anal., 52, 2676 -- 2702, 2014.] 
established for affine parametric, linear operator families; 
they imply, in particular, efficient constructions of 
(sequences of) QMC quadrature methods there, 
which are applicable to these problem classes.
We present a hybridized version of the fast
component-by-component (CBC for short) construction
of a certain type of higher order digital net.
We prove that this construction 
exploits the product nature of the QMC weights 
with linear scaling with respect to the integration dimension 
up to a possibly large, problem dependent finite dimension,
and the SPOD structure of the weights with quadratic scaling
with respect to the weights beyond this dimension.
\end{abstract}
Key words: 
Quasi-Monte Carlo, lattice rules, digital nets, 
parametric operator equations, infinite-dimensional quadrature,
Uncertainty Quantification,
CBC construction, SPOD weights.

\noindent
AMS Subject classification: 65D30, 65D32, 65N30
\section{Introduction}
\label{sec:Intro}
The numerical computation of statistical quantities for
solutions of operator equations 
which depend on ``uncertain input parameters''
is a key task 
in uncertainty quantification in engineering and in
the sciences. 
We consider here the case when the uncertain input
quantities are random variables taking values
in subsets of an infinite-dimensional, 
separable Banach space $X$.
The system's {\em responses} 
to such random inputs are, in turn,
random variables taking values in a state space $\bcX$.
One is interested in 
{\em statistical moments} of these random responses, 
such as the mean response and (co)variance. These, and other
{\em quantities of interest (QoI)} are then expressed as 
mathematical expectations over all realizations of 
the uncertain input $u\in X$. 

The numerical approximation of such QoI's in 
these problems involves two basic steps: 
i) approximate (numerical) solution of the operator equation, 
and 
ii) approximate evaluation of the mathematical expectation by 
    dimension-truncation and some form of dimension-robust 
        numerical integration, i.e. an integration method that is
     free from the curse of dimensionality 
     under certain assumptions on the integrand.
In the present paper, we outline a strategy towards these two 
aims, which is based on
i) a \emph{(Petrov-)Galerkin discretization} of the 
parametric, nonlinear operator equation 
and 
on ii) higher order \emph{QMC} integration.
It is motivated in part by \cite{KSS12}, where QMC
integration using a family of \emph{randomly shifted lattice rules} 
was combined with a Finite Element discretization for a model 
linear, parametric diffusion equation, 
and in part by \cite{ScMCQMC12}, 
where the methodology of \cite{KSS12} was extended to 
problems described by an abstract family of 
{\em linear and affine-parametric} operator equations. 

In contrast to \cite{KSS12,ScMCQMC12}, 
we propose and analyze the convergence of
\emph{deterministic}, so-called \emph{``interlaced polynomial lattice rules''}
for the numerical evaluation of infinite-dimensional
integrals for integrand functions obtained 
from Petrov-Galerkin (PG) discretization of parametric operator equations
with random input. We allow in particular \emph{distributed uncertain input data}
taking values in a separable Banach space $X$ 
which entails, upon parametrization with an unconditional basis,
infinitely many parameters. 
High order QMC quadratures are proved to provide 
dimension-independent convergence rate 
beyond order one for smooth integrands (cf. \cite{D08,D09}); 
convergence order one was the limitation in \cite{KSS12,KSS11,ScMCQMC12}.

In the present paper, we generalize these works and
prove that \emph{sparsity} of the uncertain input implies
higher order, dimension-independent convergence rates for the 
QMC evaluation of expectations of QoI's 
(under a probability measure on the space $X$ of uncertain inputs) 
for a class of nonlinear, parametric operator equations.

The outline of this paper is as follows:
in Section \ref{sec:HolOpEq}, 
we introduce a class of nonlinear,
holomorphic-parametric operator equations with sufficient
conditions on the nonlinear operators and on the uncertainty for the 
problems to be well-posed, from \cite{CCS2,GR90,PR}. 
We require that these conditions hold
uniformly on a set $\tilde{X} \subset X$ 
of admissible uncertainties.
We give a parametrization of the uncertain inputs which reduce the 
problem to a parametric, deterministic 
integration problem which depends on a possibly countable number of parameters 
$y_j\in [-1,1]$.
We review the theory of (Petrov-)Galerkin discretizations
of these equations, and develop discretization error estimates.
In Section~\ref{sec:Hol} we review the notion
of {\em holomorphy} of the integrand functions in these problems, 
from \cite{CCS2}, whereas in Section~\ref{sec:anadepsol}, 
we present the first principal result of the present paper on analyticity and parametric
regularity of the parametric integrand functions. 
Section~\ref{sec:QMC} presents the convergence theory 
for higher order QMC quadratures, based on \cite{DiPi07,DiPi10},
and, for the parametric integrands appearing here, on \cite{DKGNS13}.

In Theorem \ref{thm:Combin}, we prove an error bound with 
dimension-independent constants and convergence rates which
accounts for all sources of discretization error in the presently
proposed class of algorithms: i) dimension truncation in
the parametrized uncertain input $u\in X$, ii) 
(single-level) Petrov-Galerkin discretization of parametric operator
equation and iii) Higher order quasi-Monte Carlo quadrature
approximations of integrals of the dimensionally truncated,
parametric quantities of interest.

Based on the results in Section~\ref{sec:anadepsol}, 
the second principal result of this paper in 
Section \ref{sec:HybrCBC} pertains to 
new variants of the fast component-by-component
CBC constructions of generating vectors, which are developed
based on \cite{DKGNS13,DiGo12,Go13} and which are tailored to
the `hybrid' nature of the QMC weights, 
with possibly more favorable complexity estimates for the CBC construction.
\section{Holomorphic parametric operator equations}
\label{sec:HolOpEq}
%
We present a class of operator equations
which depend on an uncertain, ``distributed parameter'',
being an element $u$ in a real, separable Banach space $X$.
For a given, known forcing term $f\in \cY'$, and 
any instance of $u$ in (a subset of) $X$, 
the operator equation will admit a unique solution
(also referred to as ``response'')
$q\in \cX$; here, $\cX$ and $\cY$ are assumed to
be real, separable and reflexive Banach spaces and $\cY'$ is the dual space of $\cY$.
In this section, we present a mathematical setting 
which accommodates this kind of problem and
introduce conditions which ensure the (Lipschitz)
continuous dependence of the response $q\in\cX$ on the 
uncertain input $u\in X$. 
Assuming $X$ to be separable
and to admit an unconditional Schauder basis
$\Psi = \{ \psi_j \}_{j\geq 1}$, 
with an eye towards QMC algorithms,
we reformulate the operator equation with
distributed uncertain input as infinite-dimensional, 
parametric operator equation where the uncertain input $u$
is replaced by the sequence $\bsy$ of its coefficients $y_j$
with respect to the basis $\Psi$. 
We then provide
error bounds of the response subject to $s$-term 
truncations of the basis representation of $u$ in terms of 
the basis $\Psi$.
We also provide a general framework, from \cite{PR},
for {\em Petrov-Galerkin approximation} of the responses
$q\in \cX$, and bound the combined error due to 
dimension-truncation and Petrov-Galerkin approximation.
The derivative bounds of multivariate integrand functions
necessary for QMC convergence theory will be based on
\emph{analytic continuation} with respect to the
integration variable into the complex domain.
To this end, we review in Section \ref{sec:Hol} 
a holomorphy result from \cite{CCS2} for
the parameter
dependence of the uncertainty-to-response 
map $X\ni u\rightarrow q\in \cX$; to this end, we extend 
in Section \ref{sec:Hol} the
Banach spaces $X$, $\cX$ and $\cY$ to the coefficient field $\IC$.
\subsection{Nonlinear operator equations with uncertain input data}
\label{sec:OpEqUncInp}
For a distributed, uncertain parameter $u\in X$,
we consider a possibly nonlinear operator equation with input $u$ 
which is defined by a ``residual'' operator 
$\cR: X \times \cX \to \cY'$, where $\cR(u;q)$ 
acts, for given $u$, on $q\in \bcX$.
We assume a known {\em ``nominal parameter instance''}
      $\langle u \rangle \in X$
(such as, for example, the expectation of an $X$-valued random field $u$),
and consider, for $u \in {\mathscr B}_X(\langle u \rangle;R)$, 
an open ball of radius $R>0$ in $X$ centered at
$\langle u \rangle\in X$, 
the following class of smooth, parametric, nonlinear 
operator equations,
%
%
%
\be\label{eq:NonOpEqn}
\mbox{given}\;\;u\in {\mathscr B}_X(\langle u \rangle;R)\;,
\;\mbox{find} \; q\in \bcX \quad \mbox{s.t.} \quad 
_{\bcY'}\langle \cR(u;q) , v \rangle_\bcY = 0  \quad \forall v\in \bcY
\;,
\ee
where $_{\bcY'}\langle\cdot,\cdot\rangle_\bcY$ 
denotes the $\bcY'\times \bcY$-duality pairing.

Given $u\in {\mathscr B}_X(\langle u \rangle;R)$,
we call a solution $q_0$ of \eqref{eq:NonOpEqn} {\em regular
at $u$} iff $\cR(u;\cdot)$ is Fr\'{e}chet differentiable with respect to $q$
and the differential $D_q\cR(u;q_0)\in \cL(\bcX,\bcY')$ is an
isomorphism \jd{(here $\cL(\bcX,\bcY')$ denotes the set of all bounded linear functionals from $\bcX$ to $\bcY'$)}.
We impose further structural conditions on $\cR$:
for every admissible $u\in \tilde{X} \subseteq X$, 
we assume given a parametric forcing functional 
$F(u)\in \bcY'$, 
and a parametric, nonlinear operator $A(u;q): X\times \bcX \rightarrow \bcY'$,
so that \eqref{eq:NonOpEqn} is equivalent to finding,
for every $u \in  {\mathscr B}_X(\langle u \rangle;R)$,
$q(u)\in \bcX$ which satisfies the residual equation
\begin{equation}\label{eq:main}
\cR(u;q) = A(u;q) - F(u) = 0 \quad \mbox{in}\quad \bcY'\;.
\end{equation}

Problems of the form \eqref{eq:main} (i.e.,
with separate expressions $A$ and $F$ for the 
uncertain system resp. its forcing) arise in 
a number of applications; in the  form \eqref{eq:main}, 
the equation $A(u;q) = F(u)$ is obviously a special case
of \eqref{eq:NonOpEqn}. 

In the remainder of this article,
we develop sufficient conditions for unique solvability
for the parametric weak residual equation \eqref{eq:main}.
Sufficient conditions on $\cR$ for unique solvability of
\eqref{eq:main} straightforwardly imply analogous conditions
on $A$ and on $F$ in \eqref{eq:NonOpEqn} which we will not
detail in each case.

For the well-posedness of operator equations
involving $\cR(u;q)$ 
we assume the map $\cR(u;\cdot):\bcX \to \bcY'$ 
admits a family of regular solutions 
{\em locally, i.e. for each $u$ 
in an open neighborhood of the nominal parameter instance
      $\langle u \rangle \in X$}.
In particular, for all $u$ in a sufficiently small, 
closed neighborhood $\tilde{X}\subseteq X$ of $\langle u \rangle\in X$ 
(such as $ {\mathscr B}_X(\langle u \rangle;R)$ in \eqref{eq:NonOpEqn})
the problem \eqref{eq:main} is well-posed 
(see, e.g., \cite[Chapter~IV.3]{GR90}, or \cite{BRR,PR}):
for every fixed $u\in \tilde{X}\subset X$,
and for every $F(u)\in \bcY'$, 
there exists a unique solution $q(u)$ 
of \eqref{eq:main} which depends continuously on $u$.

As in \cite{BRR}, we call the set 
$\{(u,q(u)): u\in \tilde{X} \} \subset X\times \bcX$ a
{\em regular branch of solutions} of \eqref{eq:main} if
\begin{equation}\label{eq:RegBranch}
\begin{array}{l}
\tilde{X}\ni u \mapsto q(u) \;\mbox{is continuous as mapping from} \; 
\tilde{X} \mapsto \bcX\; \mbox{and}
\\
\cR(u;q(u)) = 0 \quad \mbox{in}\quad \bcY' \mbox{ for all } u \in \tilde{X}
\;.
\end{array}
\end{equation}
We call \eqref{eq:RegBranch} {\em branch of nonsingular solutions} 
if, in addition to \eqref{eq:RegBranch}, the differential
\begin{equation}\label{eq:NonSingBranch}
(D_q\cR)(u;q(u))\in \cL(\bcX,\bcY') \;
\mbox{is an isomorphism from $\bcX$ onto $\bcY'$,
     for all $u\in \tilde{X}$}
\;.
\end{equation}
The following proposition 
collects well-known sufficient conditions 
for well-posedness of \eqref{eq:main}.
For regular branches of nonsingular solutions 
given by \eqref{eq:main} - \eqref{eq:NonSingBranch},
the differential $D_q\cR$ satisfies the so-called 
{\em inf-sup conditions}.
\begin{proposition}\label{prop:WellposInfSup}
Assume that $\bcY$ is reflexive and that,
for some nominal value $\langle u \rangle\in X$ 
of the uncertainty, the operator equation 
\eqref{eq:main} admits a {\em regular branch of nonsingular solutions}
\eqref{eq:RegBranch}, \eqref{eq:NonSingBranch}.
Then the differential 
$D_q\cR \in \cL(\bcX,\bcY')$ 
at 
$(\langle u \rangle,q_0) \in X\times \bcX$,
given by the bilinear map 
$$\bcX \times \bcY \ni (\varphi,\psi) 
 \mapsto \,_{\bcY'}\langle  
 D_q\cR(\langle u\rangle;q_0)\varphi,\psi\rangle_\bcY
\;,
$$
is boundedly invertible, 
{\em uniformly with respect to $u\in \tilde{X}$}
where $\tilde{X} \subset X$ is an open neighborhood
of the nominal instance $\langle u \rangle \in X$ of the uncertain
parameter
if and only if there exists a constant $0 < \mu \leq 1$
such that there holds
\be\label{eq:DqAinfsup}
\forall u\in \tilde{X}: \quad 
\begin{array}{c} 
\displaystyle
\inf_{0\ne\varphi\in \bcX} \sup_{0\ne\psi   \in \bcY}
\frac{
 _{\bcY'}\langle  (D_q \cR)(u;q_0)\varphi,\psi\rangle_\bcY
}{
\| \varphi \|_\bcX \|\psi \|_{\bcY}
}
\geq \mu > 0 \;,
\\
\displaystyle
\inf_{0\ne\psi\in \bcY} \sup_{0\ne\varphi \in \bcX}
\frac{
 _{\bcY'}\langle  (D_q \cR)(u;q_0)\varphi , \psi \rangle_\bcY
}{
\| \varphi \|_\bcX \|\psi \|_{\bcY}
}
\geq \mu > 0
\end{array}
\ee
and
\be\label{eq:DqAsupsup}
\forall u \in \tilde{X}:
\quad 
\| (D_q\cR)(u;q_0) \|_{\cL(\bcX,\bcY')} 
=
\sup_{0\ne\varphi\in \bcX} 
\sup_{0\ne\psi   \in \bcY}
\frac{
_{\bcY'}\langle (D_q\cR)(u;q_0)\varphi,\psi\rangle_\bcY
}{
\| \varphi \|_\bcX \|\psi \|_{\bcY}
}
\leq \mu^{-1} \;.
\ee
\end{proposition}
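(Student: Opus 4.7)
The plan is to recognize this proposition as a parametric version of the Banach--Ne\v{c}as--Babu\v{s}ka theorem, applied uniformly across the neighborhood $\tilde{X}$ of the nominal parameter, with the reflexivity of $\bcY$ playing an essential role. Writing $T(u) := (D_q \cR)(u;q_0) \in \cL(\bcX,\bcY')$ for brevity, the claim is an equivalence between (a) uniform bounded invertibility of the family $\{T(u)\}_{u\in\tilde{X}}$ and (b) the pair of bounds \eqref{eq:DqAinfsup}--\eqref{eq:DqAsupsup} with a common constant $\mu$.

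For the forward implication, I would assume that $T(u)$ is boundedly invertible with uniform bounds $\|T(u)\|_{\cL(\bcX,\bcY')} \leq \mu^{-1}$ and $\|T(u)^{-1}\|_{\cL(\bcY',\bcX)} \leq \mu^{-1}$. The bound \eqref{eq:DqAsupsup} is then immediate, since its right-hand side is precisely $\|T(u)\|_{\cL(\bcX,\bcY')}$ by definition of the operator norm and of the duality pairing. For the first inf-sup in \eqref{eq:DqAinfsup}, I would use $\|T(u)\varphi\|_{\bcY'} = \sup_{0\ne\psi\in\bcY}\,_{\bcY'}\langle T(u)\varphi,\psi\rangle_{\bcY}/\|\psi\|_{\bcY}$ together with the a priori estimate $\|\varphi\|_{\bcX}\leq\mu^{-1}\|T(u)\varphi\|_{\bcY'}$, which is a restatement of $\|T(u)^{-1}\|\leq \mu^{-1}$. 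For the second inf-sup, I would invoke the Banach adjoint $T(u)^\ast \in \cL(\bcY'',\bcX')$; by reflexivity of $\bcY$ this may be viewed as an element of $\cL(\bcY,\bcX')$ which is again boundedly invertible with the same norm bounds as $T(u)$. The identity $\|T(u)^\ast\psi\|_{\bcX'} = \sup_{0\ne\varphi\in\bcX}\,_{\bcY'}\langle T(u)\varphi,\psi\rangle_{\bcY}/\|\varphi\|_{\bcX}$ combined with $\|\psi\|_{\bcY}\leq\mu^{-1}\|T(u)^\ast\psi\|_{\bcX'}$ then yields the second inf-sup in \eqref{eq:DqAinfsup}.

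For the converse, assume \eqref{eq:DqAinfsup}--\eqref{eq:DqAsupsup}. The first inf-sup gives $\|T(u)\varphi\|_{\bcY'}\geq\mu\|\varphi\|_{\bcX}$ for all $\varphi\in\bcX$, which immediately yields injectivity of $T(u)$ and, by a standard Cauchy-sequence argument applied to preimages, closedness of its range. For surjectivity I would use the second inf-sup: if $\psi\in\bcY$ annihilates the range of $T(u)$, then $\sup_{\varphi}{}_{\bcY'}\langle T(u)\varphi,\psi\rangle_{\bcY}/\|\varphi\|_{\bcX}=0$, forcing $\psi=0$. By reflexivity of $\bcY$, every bounded linear functional on $\bcY'$ is of this evaluation form, so by Hahn--Banach the range of $T(u)$ is dense in $\bcY'$; combined with closedness it equals $\bcY'$, and $T(u)$ is bijective. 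The open mapping theorem, or more directly the first inf-sup itself, gives $\|T(u)^{-1}\|_{\cL(\bcY',\bcX)}\leq\mu^{-1}$, and \eqref{eq:DqAsupsup} supplies $\|T(u)\|\leq\mu^{-1}$. Since $\mu$ is independent of $u$, the invertibility is uniform on $\tilde{X}$.

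The only real subtlety is the use of reflexivity of $\bcY$ in the surjectivity step: without it, annihilators of the range would naturally lie in $\bcY''$, not $\bcY$, and the second inf-sup as stated would not rule them out. This is why the hypothesis ``$\bcY$ reflexive'' appears in the proposition, and it is the single nontrivial structural assumption that makes the argument close cleanly.
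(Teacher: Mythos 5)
Your proof is correct: it is the standard Banach--Ne\v{c}as--Babu\v{s}ka argument (forward direction via the operator norm, the a priori bound $\|\varphi\|_{\bcX}\le\mu^{-1}\|T(u)\varphi\|_{\bcY'}$, and the adjoint identified with an element of $\cL(\bcY,\bcX')$ by reflexivity; converse via injectivity plus closed range from the first inf-sup and density of the range from the second inf-sup together with reflexivity and Hahn--Banach). The paper itself supplies no proof of this proposition --- it presents it as a collection of well-known facts, deferring to the cited references \cite{GR90,BRR,PR} --- and the argument used there is precisely the one you give, so there is nothing of substance to contrast.
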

Under conditions \eqref{eq:DqAinfsup} and \eqref{eq:DqAsupsup}, 
for every $u\in \tilde{X} \subseteq X$, 
there exists a unique, regular solution $q(u)$ of \eqref{eq:main}
which is uniformly bounded with respect to $u\in \tilde{X}$
in the sense that there exists
a constant $C(F,\tilde{X}) > 0$, independent of $q$, 
such that 
\begin{equation}\label{eq:LocBdd}
\sup_{u\in \tilde{X}} \| q(u) \|_{\bcX} \leq C(F,\tilde{X})
\;.
\end{equation}
For \eqref{eq:DqAinfsup} - \eqref{eq:LocBdd} being valid, 
we shall say that the set 
$\{ (u,q(u)): u\in \tilde{X}\}\subset \tilde{X}\times \bcX$
forms a {\em regular branch of nonsingular solutions}.

If, in addition to Frechet differentiability 
of $\cR$ with respect to $q$, for every $u\in \tilde{X} \subseteq X$,
the nonlinear functional is also Frechet differentiable
with respect to $u$ at every point of the regular branch
    $\{ (u,q(u)): u\in \tilde{X}\}\subset \tilde{X}\times \bcX$,
then the dependence of the mapping relating $u$ to $q(u)$ with the branch
of nonsingular solutions, is locally Lipschitz on ${\tilde{X}}$: 
i.e.
there exists a Lipschitz constant $L(F,\tilde{X})$ 
such that
\begin{equation}\label{eq:LocLip}
\forall u,v\in \tilde{X}:
\quad 
\| q(u) - q(v) \|_{\bcX} \leq L(F,\tilde{X}) \| u-v \|_X 
\;.
\end{equation}
This follows from $(D_u q)(u) = - (D_q\cR)^{-1} ( D_u\cR)$, and from the bounded invertability of the differential $D_q\cR$ on the regular branch, implied by \eqref{eq:DqAinfsup}.
 
In what follows, we place ourselves in the abstract
setting \eqref{eq:main} with a uniformly continuously differentiable
mapping $\cR(u;q)$ in a product of neighborhoods
${\mathscr B}_X(\langle u \rangle;R)\times {\mathscr B}_\bcX(q(\langle u \rangle);R)$
of sufficiently small radius $R>0$.
The quantity
$q(\langle u \rangle) \in \bcX$ is the corresponding 
regular solution of \eqref{eq:main} at 
the nominal value $\langle u \rangle\in X$.
\subsection{Uncertainty parametrization}
\label{sec:Param}
We shall be concerned with the particular
case where $u\in X$ is a random
variable taking values in a subset $\tilde{X}$ of 
the Banach space $X$.
We assume that $X$ is separable, infinite-dimensional, 
and admits an unconditional Schauder basis $\{\psi_j\}_{j\geq 1}$:
$X = {\rm span}\{\psi_j: j\geq 1\}$.
Moreover, we assume the {\em summability condition}
\begin{equation}\label{eq:psumpsi0}
\sum_{j\geq 1} \| \psi_j \|_X <\infty\;.
\end{equation}
Let $U = [-1,1]^{\mathbb{N}}$ 
and assume that\footnote{In our discussion of QMC quadrature 
ahead, we rescale this set to $[-1/2,1/2]^\bbN$, shift it to $[0,1]^\bbN$
in order to 
integrate with respect to the product of the Lebesgue-measure in $[0,1]^{\mathbb{N}}$.}
\begin{equation*}
\tilde{X} 
= 
\left\{u = \langle u \rangle + \sum_{j \ge 1} y_j \psi_j: \bsy = (y_1, y_2, \ldots ) \in U \right\}
\;.
\end{equation*}
The properties of the set $\tilde{X}$ depend on the properties of 
the sequence $(\psi_j)_{j \ge 1}$. 
Uncertain data $u$ with ``higher regularity'' (when measured in 
a smoothness scale $\{ X_t \}_{t\geq 0}$ with 
$X=X_0 \supset X_1 \supset X_2 \supset ...$) 
corresponds to a stronger decay of the sequence $(\| \psi_j \|_X)_{j \ge 1}$: 
specifically, 
we shall assume in what follows that $\{\psi_j\}_{j\geq 1}$
is scaled such that the sequence $\bsb = (b_j)_{j\geq 1}$ 
given by
\begin{equation}\label{eq:psumpsi}
\bsb := \{ \| \psi_j \|_X \}_{j\geq 1} \in \ell^p(\N) \;\;\mbox{for some} \;\;0< p < 1
\;.
\end{equation}
Once an unconditional Schauder basis $\Psi = \{\psi_j\}_{j\geq 1}$ 
of $X$ has been selected, 
every realization $u\in \tilde{X}$ can be identified in 
a one-to-one fashion with the pair $(\langle u \rangle,\bsy)$ 
via
\begin{equation}\label{eq:uviapsi}
u = \langle u \rangle + \sum_{j \ge 1} y_j \psi_j,
\end{equation}
where $\langle u \rangle\in X$ denotes the 
{\em nominal instance} of the uncertain datum $u$ and 
$\bsy$ is the coordinate vector
of the basis representation \eqref{eq:uviapsi}.
\begin{remark}\label{remk:U=X}
The operator $A(u;q)$ in \eqref{eq:main} becomes,
via the uncertainty parametrization \eqref{eq:uviapsi},
a parametric, deterministic operator family $A(u(\bsy);q)$ 
which we denote (with a slight abuse of notation)
by $\{ A(\bsy;q) : \bsy \in U\}$, with
the parameter set $U=[-1,1]^\bbN$ and with the basis $\{\psi_j\}_{j\geq 1}$.
Similarly we write $\cR(\bsy;q)$ instead of $\cR(u; q)$ in the following.
In the particular case that the parametric 
operator family is linear, we have 
$A(\bsy;q) = A(\bsy)q$ with $A(\bsy)\in \cL(\cX,\cY')$.
We do not assume, however, that the maps $\cX \ni q\mapsto A(\bsy;q) \in \cY'$
are linear.
\end{remark}
With these conventions and with \eqref{eq:uviapsi}, 
we may restate \eqref{eq:main} as parametric operator equation: 
given $F:U\rightarrow \cY'$,
\be\label{eq:paraOpEq} 
\mbox{find}\; q(\bsy;F)\in \cX:\;
\forall \bsy\in U:\;\;
\cR(\bsy;q) := A(\bsy;q) - F(\bsy) = 0 \quad \mbox{in}\quad \cY'
\ee
or, equivalently, with 
$\fa(\bsy;q,v) \;= \; {_{\cY'}\langle A(\bsy;q),v\rangle_{\cY} }$
and 
$\ff(\bsy;v)\; :=\; _{\cY'}\langle F(\bsy),v\rangle_{\cY}$,
\be\label{eq:paraOpEqw}
_{\cY'}\langle \cR(\bsy;q), v \rangle_\cY 
= 
\fa(\bsy;q,v) - \ff(\bsy;v) = 0 
\quad \forall v\in \cY\;.
\ee
With this understanding, and under the assumptions
\eqref{eq:LocBdd} and \eqref{eq:LocLip}, 
the operator equation
\eqref{eq:main} will admit, for every $\bsy\in U$, 
a unique solution $q(\bsy;F)$ which is, due to
\eqref{eq:LocBdd} and \eqref{eq:LocLip}, 
uniformly bounded and depends Lipschitz continuously
on the parameter sequence $\bsy\in U$: 
there holds
\begin{equation}\label{eq:ParDepy}
\sup_{\bsy\in U} \| q(\bsy;F) \|_{\cX} \leq C(F,U) 
\end{equation}
for some constant $C(F,U)$ which is independent of $q$, 
and, if the local Lipschitz condition \eqref{eq:LocLip} holds, 
there exists a Lipschitz constant $L>0$ 
such that (denoting by $\bsy, \bsy' \in U$ the
coefficient sequences associated with $u,v\in \tilde{X}$
via \eqref{eq:uviapsi})
\begin{equation}\label{eq:ParLip}
\begin{array}{rcl}
\| q(\bsy;F) - q(\bsy';F) \|_\cX 
& \leq & 
L(F,U) 
\| u - v \|_X
\\
& \leq & 
L(F,U) 
\| \bsy - \bsy' \|_{\ell^\infty(\bbN)}
\sum_{j\geq 1} \| \psi_j \|_X 
\;.
\end{array}
\end{equation}
We remark that the Lipschitz constant $L(F,U) > 0$ in \eqref{eq:ParLip} 
is not, in general, equal to $L(F,\tilde{X})$ in \eqref{eq:LocLip}:
it depends on $\langle u\rangle\in X$
and on the choice of $\{\psi_j\}_{j\geq 1}$.
\subsection{(Petrov-)Galerkin discretization}
\label{sec:Discr}
%
%
In this section we present, 
based on the theory in \cite[Chapter~IV.3]{GR90} and
in \cite{PR}, which goes back to \cite{BRR} and to M. Crouzeix,
an error analysis of (Petrov-)Galerkin discretizations of \eqref{eq:paraOpEq}
for the approximation of regular branches of solutions 
of smooth, nonlinear problems \eqref{eq:main}.
This will allow us, in the next section, to 
generalize the results \cite{KSS12,KSS11,KSS13} on 
Quasi-Monte Carlo (QMC) (Petrov-)Galerkin approximations 
for countably-parametric operator equations \eqref{eq:paraOpEq}.

To this end, as in \cite{ScMCQMC12,DKGNS13}, we assume that we are
given two one-parameter sequences
$\{ \cX^h \}_{h>0}\subset\cX$ and $\{ \cY^h \}_{h>0}\subset\cY$ of 
finite dimensional subspaces. 
We assume also that, as the discretization parameter $h\downarrow 0$, 
these sequences are dense in $\cX$ and in $\cY$, respectively. 
For the computational complexity analysis, 
we further assume the following {\em approximation properties}: 
there is a scale $\{ \cX_t \}_{t\geq 0}$ of subspaces such that 
$\cX_{t'} \subset \cX_t \subset \cX_0 = \cX$ for any 
$0<t<t'<\infty$ and such that,
for $0 < t \leq \bar{t}$ 
and for $0< h \leq h_0$, there holds
\begin{equation} \label{eq:apprprop}
\begin{aligned}
\forall v \in \cX_t\;
&:
\quad
\inf_{v^h\in \cX^h} \| v - v^h \|_{\cX}
\,\leq\,
C_t\, h^t\, \| v \|_{\cX_t} \;.
\end{aligned}
\end{equation}
Typical examples of smoothness scales $\{\bcX_t \}_{t\geq 0}$ are
given by the Sobolev scale $\bcX_t = H^{1+t}(D)$ 
in smooth domains 
(or by its weighted counterparts in polyhedra \cite{NS12}).
\begin{proposition}\label{prop:stab}
Under the assumptions of Proposition~\ref{prop:WellposInfSup} and if, in addition,  the subspace sequences 
$\{ \cX^h \}_{h>0}\subset\cX$ and $\{ \cY^h \}_{h>0}\subset\cY$ are stable, 
i.e., there exist $\bar{\mu} > 0$
and $h_0 > 0$ such that for every $0<h \leq h_0$, there 
hold the uniform (with respect to $\bsy\in U$) 
discrete inf-sup conditions
\begin{align}\label{eq:Bhinfsup1}
&\forall \bsy \in U:
\quad
\inf_{0\ne v^h \in \bcX^h} \sup_{0\ne w^h \in \bcY^h}
\frac{
 _{\bcY'}\langle  (D_q\cR)(\bsy;q_0)v^h,w^h\rangle_\bcY
}{
\| v^h \|_\bcX \| w^h \|_{\bcY}
}
\geq \bar{\mu} > 0\;,
\\
\label{eq:Bhinfsup2}
&\forall \bsy\in U:\quad
\inf_{0\ne w^h \in \bcY^h} \sup_{0\ne v^h \in \bcX^h}
\frac{_{\bcY'}\langle  (D_q\cR)(\bsy;q_0)v^h,w^h\rangle_\bcY}{\| v^h \|_{\bcX} \|w^h\|_{\bcY}}
\geq \bar{\mu}>0
\;.
\end{align}
Assume in addition that the differential $ (D_q\cR)(\bsy ;q) \in \cL(\cX,\cY')$ 
is Lipschitz with respect to $q$, uniformly with respect to 
$\bsy\in U$, 
i.e.
\be\label{eq:LipDqR}
\forall \bsy\in U:\quad 
\| (D_q\cR)(\bsy;q) - (D_q\cR)(\bsy;\tilde{q}) \|_{\cL(\cX,\cY')} 
\leq 
L \| q - \tilde{q} \|_{\cX} \;,
\ee
where the Lipschitz constant is independent of $\bsy$.

Then, for every $0<h \leq h_0$ 
the (Petrov-)Galerkin approximations: 
given $\bsy\in U$,
\begin{equation} \label{eq:parmOpEqh}
\mbox{find} \; q^h(\bsy) \in \bcX^h :
\quad
{_{\bcY'}}\langle \cR(\bsy;q^h(\bsy)), w^h \rangle_{\bcY} = 0
\quad 
\forall w^h\in \bcY^h\;,
\end{equation}
are uniquely defined and converge quasioptimally; i.e.
there exists a constant $C>0$ such that for all $\bsy\in U$
\begin{equation} \label{eq:quasiopt}
 \| q(\bsy) - q^h(\bsy) \|_{\cX}
 \,\le\, 
\frac{C}{\bar{\mu}} \inf_{0\ne v^h\in \cX^h} \| q(\bsy) - v^h\|_{\cX}
\;.
\end{equation}
If the parametric response $q(\bsy)$ belongs to $\bcX_t$ 
uniformly w.r.t. $\bsy\in U$,
and if, moreover, \eqref{eq:apprprop} holds, then 
there exists a constant $C>0$ such that, for every $\bsy\in U$
\begin{equation}\label{eq:convrate}
\| q(\bsy) - q^h(\bsy) \|_{\cX}
\,\le\, 
\frac{C}{\bar{\mu}} h^t \sup_{\bsy\in U} \| q(\bsy) \|_{\bcX_t} 
\;.
\end{equation}
\end{proposition}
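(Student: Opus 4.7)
The proof follows the standard Brezzi--Rappaz--Raviart (BRR) framework for Galerkin approximation of regular branches of solutions to smooth nonlinear operator equations, which is exactly the setting of \cite{BRR,PR} cited in the preceding text. The strategy is to linearize $\cR(\bsy;\cdot)$ at $q(\bsy)$, invert the linearization uniformly in $\bsy$ and $h$ using the discrete inf-sup conditions \eqref{eq:Bhinfsup1}--\eqref{eq:Bhinfsup2}, and then close a fixed-point/Newton-type argument using the Lipschitz property \eqref{eq:LipDqR} to produce $q^h(\bsy)$ near $q(\bsy)$.

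First I would use the two discrete inf-sup conditions to show that, for every $\bsy\in U$ and every $0<h\le h_0$, the linear map $(D_q\cR)(\bsy;q(\bsy))\colon \bcX^h\to (\bcY^h)'$ induced by restriction of test functions to $\bcY^h$ is a bijection with inverse of norm bounded by $\bar\mu^{-1}$, uniformly in $(\bsy,h)$. To pass from the single point $q_0$ to $q(\bsy)$, one uses \eqref{eq:LipDqR}, together with the uniform continuous-level stability from Proposition~\ref{prop:WellposInfSup} and the uniform bound \eqref{eq:ParDepy}: a perturbation argument shows that the discrete inf-sup constant is preserved (up to a factor close to one) at any $q$ in a fixed $\bcX$-neighborhood of the branch $\{q(\bsy):\bsy\in U\}$, provided $h_0$ (equivalently, the neighborhood radius) is chosen small enough so that $L\|q-q(\bsy)\|_\bcX\le \bar\mu/2$.

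Next I would apply a Newton--Kantorovich fixed-point iteration in $\bcX^h$: define $T^h_{\bsy}(v^h)=v^h - B^h(\bsy)^{-1}\cR(\bsy;v^h)$, where $B^h(\bsy)$ is the discrete linearization. Writing $\cR(\bsy;v^h)=\cR(\bsy;v^h)-\cR(\bsy;q(\bsy))$ and using the integral form of the mean value theorem together with \eqref{eq:LipDqR}, one sees that $T^h_{\bsy}$ is a contraction on a small $\bcX$-ball around $q(\bsy)$ whose centre has residual of order $\inf_{v^h\in \bcX^h}\|q(\bsy)-v^h\|_\bcX$. For $h_0$ small enough this ball is nonempty and stable under $T^h_{\bsy}$, so Banach's fixed-point theorem yields a unique $q^h(\bsy)\in \bcX^h$ solving \eqref{eq:parmOpEqh} in that neighborhood, depending measurably (in fact continuously) on $\bsy$.

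Finally, the quasi-optimality \eqref{eq:quasiopt} follows by a standard argument: for arbitrary $v^h\in \bcX^h$, applying the discrete inf-sup to $q^h(\bsy)-v^h\in\bcX^h$ and testing against $w^h\in \bcY^h$ gives
\[
\bar\mu\,\|q^h(\bsy)-v^h\|_\bcX
\le \sup_{0\ne w^h\in\bcY^h}
\frac{{}_{\bcY'}\langle \cR(\bsy;q^h(\bsy))-\cR(\bsy;v^h),w^h\rangle_\bcY}
{\|w^h\|_\bcY},
\]
using $\cR(\bsy;q^h(\bsy))=0$ in $(\bcY^h)'$. Expanding the right-hand side by the fundamental theorem of calculus and inserting a zero $\pm (D_q\cR)(\bsy;q(\bsy))(q(\bsy)-v^h)$, then using \eqref{eq:DqAsupsup} and \eqref{eq:LipDqR} together with the already-established bound $\|q^h(\bsy)-q(\bsy)\|_\bcX \le C\inf_{v^h}\|q(\bsy)-v^h\|_\bcX$, yields \eqref{eq:quasiopt} with a constant independent of $\bsy$ and $h$. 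The rate \eqref{eq:convrate} is then immediate by applying \eqref{eq:apprprop} with $v=q(\bsy)$ and taking the supremum over $\bsy\in U$. The main technical obstacle is the uniformity in $\bsy\in U$ throughout: one must verify that the BRR neighborhood radius, the contraction constant of $T^h_{\bsy}$, and the constants in the quasi-optimality estimate can all be chosen independently of $\bsy$, which is precisely what is granted by the uniform hypotheses \eqref{eq:DqAinfsup}--\eqref{eq:DqAsupsup}, \eqref{eq:ParDepy}, \eqref{eq:Bhinfsup1}--\eqref{eq:Bhinfsup2}, and \eqref{eq:LipDqR}.
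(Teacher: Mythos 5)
Your argument is correct in substance, but note that the paper does not actually prove this proposition: its entire ``proof'' is the one-line remark that the result follows from \cite[Theorem~4]{PR}. What you have written is, in effect, a self-contained reconstruction of the proof of that cited theorem via the Brezzi--Rappaz--Raviart framework --- uniform invertibility of the discrete linearization from \eqref{eq:Bhinfsup1}--\eqref{eq:Bhinfsup2}, a perturbation step using \eqref{eq:LipDqR} to move the linearization point from $q_0$ along the branch $\{q(\bsy)\}$, a Newton--Kantorovich contraction giving existence and \emph{local} uniqueness of $q^h(\bsy)$, and then quasi-optimality plus \eqref{eq:apprprop} for the rate. This buys self-containedness where the paper buys brevity at the cost of asking the reader to verify that the hypotheses of \cite[Theorem~4]{PR} hold uniformly in $\bsy\in U$, which is precisely the point you correctly single out as the main technical obstacle. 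One presentational wrinkle in your quasi-optimality step: the displayed inequality applies the discrete inf-sup constant $\bar\mu$ directly to the difference of \emph{nonlinear} residuals $\cR(\bsy;q^h(\bsy))-\cR(\bsy;v^h)$, whereas \eqref{eq:Bhinfsup1} only controls the \emph{linearized} form $_{\bcY'}\langle (D_q\cR)(\bsy;q_0)\,\cdot\,,\cdot\rangle_{\bcY}$ applied to $q^h(\bsy)-v^h$; the mean-value/Lipschitz manipulation you describe in the following sentence is what legitimizes replacing one by the other (up to a higher-order term absorbed for $h\le h_0$ small), so in a final write-up the linearized form should appear on the left of that display and the nonlinear residual difference only afterwards. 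This is an ordering issue rather than a gap.
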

This result follows, under the stated hypotheses, from \cite[Theorem~4]{PR}.
In the ensuing QMC convergence analysis we shall also require
error bounds for the dimensionally truncated parameter sequences.
The present framework of regular branches of solutions
of nonlinear operator equations covers many equations of interest
in applications: we mention only problems of viscous, incompressible
flows (see, e.g., \cite[Chapter~IV.4, IV.5]{GR90} for viscous,
incompressible flow, \cite[Section~5]{CCS2} for nonlinear, elliptic
PDEs in uncertain domains, and for linear, parabolic PDEs in 
uncertain domains).
\subsection{Dimension truncation}
\label{sec:dimtrunc}
For a {\em truncation dimension} $s\in \N$, 
denote the $s$-term truncation of the series representation \eqref{eq:uviapsi} of 
the uncertain datum $u$ by $u^s\in X$.
Then, dimension truncation is equivalent to setting $y_j=0$ 
for $j>s$ in \eqref{eq:uviapsi}. \jd{For $\bsy\in U$, we define 
\be \label{eq:y1s}
\bsy_{\{1:s\}} := (y_1,y_2,...,y_s,0,0,...).
\ee }
We denote by $q^s(\bsy)$ the solution of the corresponding parametric weak problem \eqref{eq:paraOpEq}. 
Unique solvability of \eqref{eq:paraOpEq} 
for every $\bsy\in U$ implies also unique solvability for the dimension truncated problem
with solution $q^s(\bsy) = q( \jd{ \bsy_{\{1:s\}} } )$ 
and introduce $u^s(\bsy) := u(\bsy_{\{1:s\}})$.
We bound the {\em dimension truncation error}
$q(\bsy) - q^s(\bsy)$ based on
\begin{assumption}\label{asmp:psum}
(i) 
$\bsb\in \ell^p(\IN)$ for some $0<p<1$, i.e. \eqref{eq:psumpsi} holds;

\noindent
(ii) the $b_j$ are enumerated in non-increasing order, i.e.
\begin{equation} \label{eq:ordered} 
b_1 \ge b_2 \ge \cdots \ge b_j \ge \, \cdots\;.
\end{equation}
\end{assumption}
Under Assumption \ref{asmp:psum},
we consider the {\em $s$-term truncated problem}: 
given $\bsy_{\{1:s\}} \in U$ 
\begin{equation}\label{eq:mainstrunc}
\mbox{find}\;q^s\in \bcX:\quad 
{ _{\bcY'} \langle \cR(\bsy_{\{1:s\}};q^s), w \rangle_{\bcY} } = 0 
\;\;\forall w \in \bcY 
\;.
\end{equation}
Under our assumption on well-posedness of the problem
\eqref{eq:NonOpEqn} uniformly for all 
$u\in {\mathscr B}_X(\langle u \rangle;R)$,
the basis property \eqref{eq:uviapsi} of the sequence $\{ \psi_j \}$
implies that $u^s \in \tilde{X}$.
Therefore, the parametric problem \eqref{eq:mainstrunc} 
admits a unique solution for every $\bsy\in U$.
\begin{theorem} \label{thm:trunc}
Under the Assumptions in Section \ref{sec:OpEqUncInp},
and assuming \eqref{eq:psumpsi} and \eqref{eq:uviapsi},
for every $f\in \cY'$, for every
$\bsy\in U$ and for every $s\in\bbN$, 
the solution $q^s(\bsy)$ of the 
parametric weak problem \eqref{eq:paraOpEq} 
with $s$-term truncated parametric expansion \eqref{eq:uviapsi}
satisfies, with $\bsb$ as defined in 
\eqref{eq:psumpsi},
\begin{equation}\label{eq:Vdimtrunc}
\sup_{\bsy\in U}
  \| q(\bsy) - q^s(\bsy) \|_\bcX
  \,\le\, C(F,X)
  \sum_{j\ge s+1} b_j
\;.
\end{equation}
Moreover, for every observation functional 
$\LinFun(\cdot)\in \cX'$, there holds the 
dimension-truncation error bound
\begin{equation}\label{eq:Idimtrunc}
  |I(\LinFun(q))- I(\LinFun(q^s))|
  \,\le\, \tilde{C} 
  \bigg(\sum_{j\ge s+1} b_j\bigg)
\end{equation}
for some constant $\tilde{C}>0$ independent of $s$.
In addition, 
if conditions~\eqref{eq:psumpsi0}, \eqref{eq:psumpsi} 
and \eqref{eq:ordered} hold, then
\begin{equation}\label{eq:DTbound}
  \sum_{j\ge s+1} b_j
  \,\le\,
  \min\left(\frac{1}{1/p-1},1\right)
  \bigg(\sum_{j\ge1} b_j^p \bigg)^{1/p}
  s^{-(1/p-1)}
  \;.
\end{equation}
\end{theorem}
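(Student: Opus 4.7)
The plan is to split the theorem into its three inequalities and address them in order: the first two are direct consequences of continuity established earlier, while the third is a standard $\ell^p$ tail estimate that requires a small but nontrivial argument.

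For the pointwise bound \eqref{eq:Vdimtrunc}, I would invoke the local Lipschitz property \eqref{eq:LocLip} of the uncertainty-to-response map. Writing $u = \langle u \rangle + \sum_{j \ge 1} y_j \psi_j$ and $u^s = u(\bsy_{\{1:s\}}) = \langle u \rangle + \sum_{j=1}^s y_j \psi_j$, the difference $u - u^s = \sum_{j \ge s+1} y_j \psi_j$ satisfies, by the triangle inequality and $|y_j|\le 1$,
\[
\|u - u^s\|_X \,\le\, \sum_{j \ge s+1} |y_j|\,\|\psi_j\|_X \,\le\, \sum_{j \ge s+1} b_j.
\]
Applying \eqref{eq:LocLip} to $q(u)$ and $q(u^s)$ then yields \eqref{eq:Vdimtrunc} with $C(F,X) = L(F,\tilde{X})$; the admissibility $u^s \in \tilde{X}$ follows since $u^s$ corresponds to $\bsy_{\{1:s\}} \in U$ under the parametrization \eqref{eq:uviapsi}. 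Bound \eqref{eq:Idimtrunc} is then immediate from $|\LinFun(q - q^s)| \le \|\LinFun\|_{\cX'}\,\|q - q^s\|_\cX$, the pointwise bound just obtained, and the fact that the integration is carried out over the probability space $U$; this gives $\tilde{C} = C(F,X)\,\|\LinFun\|_{\cX'}$.

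The substantive step is the tail estimate \eqref{eq:DTbound}. My plan is to use Assumption~\ref{asmp:psum}: because $(b_j)$ is non-increasing and non-negative,
\[
j\,b_j^p \,\le\, \sum_{k=1}^j b_k^p \,\le\, \|\bsb\|_{\ell^p}^p,
\quad\text{hence}\quad
b_j \,\le\, \|\bsb\|_{\ell^p}\, j^{-1/p}.
\]
From this two complementary tail bounds follow. Integral comparison gives
\[
\sum_{j \ge s+1} b_j \,\le\, \|\bsb\|_{\ell^p}\!\int_{s}^{\infty} \! x^{-1/p}\,\rd x \,=\, \frac{\|\bsb\|_{\ell^p}}{1/p-1}\, s^{-(1/p-1)},
\]
while the factorization $b_j = b_j^{p}\, b_j^{1-p}$ together with the monotonicity bound $b_j \le b_{s+1} \le \|\bsb\|_{\ell^p} s^{-1/p}$ for $j \ge s+1$ yields
\[
\sum_{j \ge s+1} b_j \,\le\, b_{s+1}^{1-p} \sum_{j \ge s+1} b_j^p \,\le\, \|\bsb\|_{\ell^p}\, s^{-(1/p-1)}.
\]
Taking the minimum of the two prefactors produces \eqref{eq:DTbound}. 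The only delicate point is that neither of the two estimates alone dominates uniformly in $p \in (0,1)$: the integral bound blows up as $p \uparrow 1$, whereas the Hölder-type bound is uniformly controlled but less sharp for small $p$. Retaining the minimum is what keeps the constant bounded across the entire admissible range of $p$.
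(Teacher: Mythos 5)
Your proposal is correct and follows essentially the same route as the paper: the Lipschitz bound \eqref{eq:LocLip} combined with the triangle-inequality estimate $\|u-u^s\|_X\le\sum_{j\ge s+1}b_j$ gives \eqref{eq:Vdimtrunc}, and \eqref{eq:Idimtrunc} follows by duality and integration over $U$. The paper merely asserts that \eqref{eq:DTbound} follows from $p$-summability and the ordering; your explicit derivation via $b_j\le\|\bsb\|_{\ell^p}j^{-1/p}$, the integral comparison, and the H\"older-type factorization $b_j=b_j^p b_j^{1-p}$ is the standard argument being invoked and correctly accounts for both terms in the minimum.
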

\begin{proof}
From the Lipschitz dependence \eqref{eq:LocLip}, 
we obtain
$$
\forall \bsy\in U:\quad 
\| q(\bsy) - q^s(\bsy) \|_{\bcX} \leq L(F,\tilde{X}) \| u(\bsy) - u^s(\bsy) \|_X
\;.
$$
From  \eqref{eq:uviapsi}, the $p$-summability \eqref{eq:psumpsi} 
of the sequence $\bsb$ and the
monotonicity \eqref{eq:ordered} we infer that the error of the 
$s$-term truncation $u^s(\bsy)$, $\| u(\bsy) - u^s(\bsy)\|_X$, 
can be bounded by a best $s$-term truncation error of 
$\bsb$ in the norm of $\ell^1(\IN)$ by
$$
\sup_{\bsy\in U} 
\| u(\bsy) - u^s(\bsy)\|_X
\leq 
\sum_{j \geq s+1}
b_j
\;.
$$
The $p$-summability $\bsb\in \ell^p(\IN)$ 
in Assumption \ref{asmp:psum}(i) and the 
(assumed) ordering \eqref{eq:ordered} imply \eqref{eq:DTbound}.
\end{proof}
As $\bsy\in U$ implies $\bsy_{\{1:s\}} \in U$ for all
$s\in \IN$, we obtain from  Proposition \ref{prop:stab}
immediately
\begin{corollary}\label{coro:DimTrcStab}
Under the assumptions of  Proposition \ref{prop:stab},
for given $\bsy_{\{1:s\}}\in U$,
the dimensionally truncated (Petrov-)Galerkin approximations 
\begin{equation} \label{eq:parmOpEqh_trun}
\mbox{find} \; q^h(\bsy_{\{1:s\}}) \in \bcX^h :
\quad
{{_{\bcY'}}\langle  \cR(\bsy_{\{1:s\}};q^h(\bsy_{\{1:s\}})), w^h \rangle_{\bcY} } = 0
\quad 
\forall w^h\in \bcY^h
\;,
\end{equation}
admit unique solutions $q^h(\bsy_{\{1:s\}}) \in \bcX^h$ which
converge, as $h\downarrow 0$, quasioptimally to $q(\bsy_{\{1:s\}}) \in \bcX$, 
i.e. \eqref{eq:quasiopt} and \eqref{eq:convrate} hold 
with $\bsy_{\{1:s\}}$ in place of $\bsy$, with
$C>0$ and $\bar{\mu}>0$ independent of $s$, of $\bsy\in U$ and of $h$.
\end{corollary}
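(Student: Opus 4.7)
The plan is to observe that the whole statement is an immediate specialization of Proposition~\ref{prop:stab} once one notes that dimension truncation is an inclusion of $U$ into itself. First I would note the key set-theoretic fact: for every $\bsy=(y_1,y_2,\ldots)\in U=[-1,1]^{\mathbb{N}}$ and every $s\in\mathbb{N}$, the truncated sequence $\bsy_{\{1:s\}}=(y_1,\ldots,y_s,0,0,\ldots)$ still lies in $U$, because each entry is either one of $y_1,\ldots,y_s\in[-1,1]$ or $0\in[-1,1]$. In particular the uncertainty $u^s(\bsy)=u(\bsy_{\{1:s\}})$ belongs to the admissible set $\tilde{X}$, so the residual equation \eqref{eq:mainstrunc} falls within the scope of \eqref{eq:paraOpEq} and the hypotheses of Proposition~\ref{prop:stab}.

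With this observation in hand, I would simply apply Proposition~\ref{prop:stab} with $\bsy$ replaced by $\bsy_{\{1:s\}}$. The discrete inf-sup conditions \eqref{eq:Bhinfsup1}--\eqref{eq:Bhinfsup2} are assumed to hold \emph{uniformly for all $\bsy\in U$} with constant $\bar{\mu}>0$ and threshold $h_0>0$; specializing to $\bsy_{\{1:s\}}\in U$ yields exactly the same discrete inf-sup inequalities at the truncated parameter, with the same constant $\bar{\mu}$ that is in particular independent of $s$. Similarly, the uniform Lipschitz bound \eqref{eq:LipDqR} on $D_q\cR$ and the approximation property \eqref{eq:apprprop} are unaffected by truncation. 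Proposition~\ref{prop:stab} therefore delivers, for every $0<h\le h_0$, a unique Petrov-Galerkin solution $q^h(\bsy_{\{1:s\}})\in\bcX^h$ of \eqref{eq:parmOpEqh_trun} together with the quasi-optimality estimate \eqref{eq:quasiopt} and, under the assumed smoothness of the parametric response, the convergence rate \eqref{eq:convrate}, both with $\bsy_{\{1:s\}}$ in place of $\bsy$ and with constants that do not involve $s$.

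There is essentially no obstacle in the argument; the only thing to verify carefully is that every constant appearing in Proposition~\ref{prop:stab} was stated uniformly in $\bsy\in U$, so that passing to $\bsy_{\{1:s\}}\in U$ introduces no $s$-dependence. The independence of $h$ is likewise inherited from the corresponding statement in Proposition~\ref{prop:stab}, and independence of $\bsy\in U$ is maintained because $\{\bsy_{\{1:s\}}: \bsy\in U\}\subset U$. This completes the reduction.
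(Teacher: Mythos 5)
Your proposal is correct and follows exactly the paper's argument: the paper likewise deduces the corollary ``immediately'' from Proposition~\ref{prop:stab} by observing that $\bsy\in U$ implies $\bsy_{\{1:s\}}\in U$, so that the uniform-in-$\bsy$ constants $C$ and $\bar{\mu}$ carry over without any $s$-dependence. Nothing is missing.
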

%
\subsection{Holomorphic parameter dependence}
\label{sec:Hol}
%
In the error analysis for QMC integration methods
as presented, e.g., in \cite{KSS12,KSS11,KSS13},
derivative bounds for the integrand functions 
that are explicit with respect to the dimension $s$
are essential. 
In \cite{CDS2,CCS2}, such bounds were obtained
via holomorphy of countably parametric families of
operator equations and their parametric solutions.
By this we mean that the parametric family of solutions
permits, with respect to each parameter $y_j$, a holomorphic
extension into the complex domain $\C$; for purposes of QMC
integration, in addition, some uniform bounds on these
holomorphic extensions must be satisfied in order to 
prove approximation rates and QMC quadrature error bounds
which are independent of the number of parameters 
which are ``activated'' 
in the QMC quadrature process.

In the remainder of Section \ref{sec:HolOpEq} and 
throughout the next Section \ref{sec:anadepsol}, 
\emph{
all
spaces $X$, $\bcX$ and $\bcY$ will be understood as 
Banach spaces over $\C$, without notationally indicating
so.
}
\subsubsection{$(\bsb,p,\eps)$-Holomorphy}
\label{sec:bpeHol}
In \cite{HaSc11,CCS2}, the notion of 
{\em $(\bsb, p, \eps)$-holomorphy of parametric solutions}
has been introduced. 
For $\kappa>1$, we define the sets $\cT_\kappa\supset [-1,1]$ as
\begin{equation}\label{eq:TubeDef}
\cT_\kappa 
= \{ z\in \bbC \mid {\rm dist}(z,[-1,1]) \leq \kappa - 1\}
= \bigcup_{-1\leq y \leq 1} \{ z\in \IC \mid |z-y|\leq \kappa -1 \} 
\subset \bbC
\;.
\end{equation}
\begin{definition} \label{def:peanalytic} ($(\bsb,p,\eps)$-holomorphy)
For $\eps > 0$ and for a positive sequence 
$\bsb = (b_j)_{j\geq 1} \in \ell^p(\IN)$ for
some $0<p<1$, we say that a parametric solution family 
$q(\bsy) : U\mapsto \cX$ of \eqref{eq:main}
satisfies the {\em $(\bsb,p,\eps)$-holomorphy assumption} 
if and only if all of the following conditions hold:
\begin{enumerate}
\item 
For each $\bsy\in U$, 
the map $\bsy\mapsto q(\bsy)$ from $U$ to $\cX$
is uniformly bounded w.r.t. the parameter sequence $\bsy$, 
i.e.
\be
\label{ubNewProblem}
\sup_{\bsy\in U}\|q(\bsy)\|_X \leq B_0\;,
\ee
for some finite constant $B_0>0$.
\item 
For any sequence $\bsrho :=(\rho_j)_{j\geq 1}$
of numbers $\rho_j > 1$ that satisfies 
\be
\label{eq:rho_b}
\sum_{j \geq 1} (\rho_j-1) b_j \leq \eps,
\ee
%
the parametric solution map $U\ni\bsy\mapsto q(\bsy)$ 
admits an extension
$\bsz \mapsto q(\bsz)$ to the complex domain
that is holomorphic with
respect to each variable $z_j$ 
in a cylindrical set of the form 
$\cO_{\bsrho} := \bigotimes_{j\geq 1} \cO_{\rho_j}$,
where, for every integer $j\geq 1$, 
$\cO_{\rho_j}\subset\C$ 
is an open set containing the closed tube $\cT_{\rho_j}$.
For a poly-radius $\bsrho$ satisfying
\eqref{eq:rho_b}, we denote by $\cT_\bsrho$ 
the corresponding cylindrical set 
$\cT_\bsrho := \bigotimes_{j\geq 1} \cT_{\rho_j}\subset \C^\N$.

\item
For any poly-radius $\bsrho $ satisfying \eqref{eq:rho_b}, 
there is a second family 
$\tilde{\cO}_{\bsrho} := \bigotimes_{j\geq 1} \tilde{\cO}_{\rho_j}$
of open, cylindrical sets 
$$
{\cO}_{\rho_j} \subset \tilde{\cO}_{\rho_j} \subset \C
$$
(strict inclusions), such that the
extension is bounded on the closure 
$\overline{\tilde{\cO}_\bsrho}$ of $\tilde{\cO}_{\bsrho}$
according to
\be
\sup_{\bsz\in\tilde{\cO}_\rho}\|q(\bsz)\|_X \leq B_\eps \, ,
\ee
where $B_\eps > 0$ depends on $\eps$, 
but is independent of $\bsrho$.
\end{enumerate}
\end{definition}

The notion of $(\bsb, p,\eps)$-holomorphy depends implicitly on the 
choice of sets $\cO_\rho$ and $\tilde{\cO}_{\rho}$.
Depending on the approximation process in the parameter 
domain $U$ under consideration, a particular choice of the sets 
$ \tilde{\cO}_{\rho_j}$ has to be made in order to obtain sharp
convergence bounds under minimal holomorphy requirements.

In \cite{CCS2,CDS2},
the sets $\cO_\rho$ were chosen to contain
Bernstein ellipses $\cE_\rho$ which are natural in the context
of Legendre polynomial chaos approximations. In the context
of Taylor- or Tschebyscheff polynomial approximations,
polydiscs $\cO_\rho$ are natural (cf. \cite{HaSc11}).
For the derivative bounds which arise in connection
with higher order QMC error analysis
(see, e.g., \cite{DKGNS13,KSS12}),
we use the tubes $\cT_{\rho}$ 
{
\eqref{eq:TubeDef} as continuation domains
$\cO_\rho$ and 
$\tilde{\cO} = \cT_{\tilde{\rho}}$ with $\tilde{\rho} > \rho > 1$.
}
%
\subsubsection{Holomorphic parametric operator equations}
\label{sec:ParOpEq}
We next consider parametric models \eqref{eq:paraOpEq}
and the regularity of their (countably-) parametric solution
families.
The following result, \cite[Theorem~2.4]{CCS2},
ensures $(\bsb, p,\eps)$-holomorphy 
of the parametric solution map $\bsy\mapsto q(\bsy)$ 
with respect to the holomorphy domains $\cT_{\bsrho}$ 
in Definition \ref{def:peanalytic}
under the assumption of $(\bsb, p,\eps)$-holomorphy of the 
parametric maps $A$ and $F$ in \eqref{eq:main} 
and \eqref{eq:paraOpEq}.
\begin{theorem} \label{thm:AnalyticLaxMilgram}
Assume that in \eqref{eq:psumpsi} it holds $\bsb\in \ell^p(\IN)$ for
some $0< p \leq 1$. 
Assume further that (recall that $X$, $\bcX$ and $\bcY$ are
understood as Banach spaces over $\IC$) the residual map 
$X\times \bcX \ni (u,q) \mapsto \cR(u;q) \in \bcY'$ 
in \eqref{eq:NonOpEqn} is continuously Frechet-differentiable, 
and
$$
\forall u\in \tilde{X}: \quad (D_q\cR)(u; q(u)) \in \jd{\cL(\bcX, \bcY') } 
$$
\jd{is an isomorphism.}

Then there holds: 
(i) 
The parametric residual map 
$\cR(\bsz;q)$ in \eqref{eq:main}, \eqref{eq:paraOpEqw} 
admits a holomorphic extension (still denoted by $\cR(\bsz;q)$)
which satisfies the $(\bsb, p,\eps)$-holomorphy assumptions 
for $\bsz \in \cT_{\bsrho}$ with the same $p$ and $\eps$ 
and with the same sequence $\bsb$.

(ii) 
Then there exists $\eps > 0$ such that
the parametric regular branch of nonsingular solutions,
$ U \ni \bsy \mapsto q(\bsy)$,
admits a holomorphic extension with respect to the parameters
$\bsy$ to the sets 
$\cT_{\bsrho} = \bigotimes_{j\geq 1}\cT_{\rho_j}$ 
with $\cT_{\rho_j}$ as in \eqref{eq:TubeDef}, for any 
$\bsrho = (\rho_j)_{j\geq 1}$ which satisfies \eqref{eq:rho_b}.
In particular, the parameter dependence of 
this holomorphic extension $\bsz\mapsto q(\bsz) \in \bcX$ 
of the regular branch
of solutions is $(\bsb,p,\eps)$-holomorphic.
\end{theorem}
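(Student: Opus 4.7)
The plan is to deduce both assertions from a holomorphic implicit function theorem, using the affine parametrization $u(\bsz) = \langle u\rangle + \sum_{j\geq 1} z_j\psi_j$ to lift $(\bsb,p,\eps)$-holomorphy of the input datum to the level of $\cR$ and then to the solution $q$. For (i), fix any poly-radius $\bsrho$ satisfying \eqref{eq:rho_b}. For $\bsz \in \cT_{\bsrho}$ the bound $|z_j| \leq \rho_j$ together with \eqref{eq:psumpsi0} gives $\sum_{j\geq 1} |z_j|\,\|\psi_j\|_X \leq \|\bsb\|_{\ell^1(\IN)} + \sum_{j\geq 1}(\rho_j-1) b_j \leq \|\bsb\|_{\ell^1(\IN)} + \eps$, so the series defining $u(\bsz)$ converges absolutely and uniformly on $\cT_{\bsrho}$ in the complexified $X$, with each $N$-th partial sum an $X$-valued polynomial in $\bsz$, and the limit staying in a fixed bounded complex neighborhood of $\langle u\rangle$. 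Continuous Frechet differentiability of $(u,q)\mapsto \cR(u;q)$, extended complex-linearly to the complexifications of $X$, $\bcX$, $\bcY$, then shows that $\cR(u(\bsz);q)$ is holomorphic in each $z_j$ separately, with uniform bounds on any slightly enlarged tube $\tilde{\cO}_{\bsrho}$ inherited from continuity of $\cR$ on bounded sets; this is the $(\bsb,p,\eps)$-holomorphy claim for $\cR$.

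For (ii), I would proceed by a holomorphic implicit function theorem. At $\bsz = \bszero$ the hypothesis asserts that $D_q\cR(\langle u\rangle; q(\langle u\rangle)) \in \cL(\bcX,\bcY')$ is an isomorphism; openness of the set of isomorphisms in $\cL(\bcX,\bcY')$ together with continuity of $(u,q) \mapsto D_q\cR(u;q)$ provides an open neighborhood $\cN \subset X \times \bcX$ of $(\langle u\rangle, q(\langle u\rangle))$ on which $D_q\cR$ remains an isomorphism with uniformly bounded inverse. I then choose $\eps$ small enough that whenever $\bsrho$ satisfies \eqref{eq:rho_b} and $\bsz \in \cT_{\bsrho}$, the pair $(u(\bsz),q)$ lies in $\cN$ for every $q$ in a fixed ball ${\mathscr B}_\bcX(q(\langle u\rangle);R')$. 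Rewriting $\cR(\bsz;q)=0$ as the fixed-point problem $q = q - (D_q\cR(\langle u\rangle; q(\langle u\rangle)))^{-1}\cR(\bsz;q)$ and contracting in this ball produces a unique solution $q(\bsz)$; holomorphy of $q(\bsz)$ in each $z_j$ follows from holomorphy of the map being contracted, and uniqueness ensures the local constructions glue into a single holomorphic extension on the entire cylinder $\cT_{\bsrho}$ that agrees on $U$ with the real branch $q(\bsy)$.

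The principal obstacle is producing the uniform bound required by the third clause of Definition \ref{def:peanalytic} on a strictly larger cylinder $\tilde{\cO}_{\bsrho} \supset \cT_{\bsrho}$ with a constant $B_\eps$ depending only on $\eps$ and not on the individual $\rho_j$. The device is to replace $\bsrho$ by $\tilde{\bsrho}$ with $\tilde{\rho}_j > \rho_j$ chosen so that $\sum_{j\geq 1}(\tilde{\rho}_j - 1) b_j \leq 2\eps$; since $\bsb \in \ell^p(\IN)$ with $p<1$ and hence $\bsb \in \ell^1(\IN)$, such an enlargement is always possible. On $\cT_{\tilde{\bsrho}}$ the image $u(\bsz)$ still lies in the fixed neighborhood of $\langle u\rangle$ on which $D_q\cR$ is uniformly invertible, so the very same contraction argument locates $q(\bsz)$ in ${\mathscr B}_\bcX(q(\langle u\rangle);R')$ with a bound depending only on $\eps$ (through the radius $R'$ and the uniform inverse bound on $\cN$). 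This verifies $(\bsb,p,\eps)$-holomorphy of the regular branch and concludes the proof.
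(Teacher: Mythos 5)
First, note that the paper does not prove this theorem itself: it is quoted as \cite[Theorem~2.4]{CCS2}, so your proposal can only be compared against the strategy of that reference, which is indeed a holomorphic implicit function theorem argument. Your treatment of part (i) and of the uniform bound $B_\eps$ on an enlarged cylinder $\tilde{\cO}_{\bsrho}$ (working with $2\eps$ in place of $\eps$) is sound: the estimate $\sum_{j\ge1}|z_j|\,b_j\le\|\bsb\|_{\ell^1(\IN)}+\eps$ for $\bsz\in\cT_{\bsrho}$, uniform convergence of the partial sums of $u(\bsz)$, and composition with the complex-Fr\'echet-differentiable map $\cR$ give separate holomorphy plus local boundedness, which is what Definition~\ref{def:peanalytic} requires of $\cR$.

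There is, however, a genuine gap in part (ii). You anchor the perturbation and the fixed-point iteration at the single point $(\langle u\rangle, q(\langle u\rangle))$: you invoke openness of the set of isomorphisms to get a neighborhood $\cN$ of that one point, and then claim that $\eps$ can be chosen so small that $(u(\bsz),q)\in\cN$ for all $\bsz\in\cT_{\bsrho}$. This cannot work, because $\cT_{\bsrho}\supset U=[-1,1]^{\IN}$ for every admissible $\bsrho$, so $u(\bsz)$ already sweeps out the entire set $\tilde{X}$, whose diameter in $X$ is of order $2\|\bsb\|_{\ell^1(\IN)}$ and is not controlled by $\eps$; no choice of $\eps$ in \eqref{eq:rho_b} shrinks it. Likewise the map $q\mapsto q-(D_q\cR(\langle u\rangle;q(\langle u\rangle)))^{-1}\cR(\bsz;q)$ is only a contraction when $D_q\cR(u(\bsz);q)$ is close to $D_q\cR(\langle u\rangle;q(\langle u\rangle))$, i.e.\ only near the nominal point. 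The correct argument — and the one underlying \cite[Theorem~2.4]{CCS2}, alluded to in the paper just before Theorem~\ref{thm:DsiboundC} via the covering of $[-1,1]$ by small balls (\cite[Lemma~5.2]{CCS2}) — uses the hypothesis that $(D_q\cR)(u;q(u))$ is an isomorphism at \emph{every} $u\in\tilde{X}$, with the uniform bounds \eqref{eq:DqAinfsup}--\eqref{eq:DqAsupsup}, to apply the holomorphic implicit function theorem locally around each real pair $(u(\bsy),q(\bsy))$, $\bsy\in U$. The condition $\sum_{j\ge1}(\rho_j-1)b_j\le\eps$ then guarantees that every $\bsz\in\cT_{\bsrho}$ satisfies $\|u(\bsz)-u(\bsy)\|_X\le\eps$ for some $\bsy\in U$, so for $\eps$ below the \emph{uniform} radius of applicability of the local IFT the extensions exist everywhere on $\cT_{\bsrho}$ and glue by uniqueness. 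Your closing remark about gluing shows you have the right picture, but the quantitative setup must be centered on the whole branch $\{(u,q(u)):u\in\tilde X\}$, not on the single nominal instance.
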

%
\section{Parametric regularity of solutions}
\label{sec:anadepsol}
In this section we study the dependence of the solution $q(\bsy)$ of
the parametric, variational problem \eqref{eq:paraOpEq} on the parameter
vector $\bsy$, with precise bounds on the growth of the partial
derivatives. These derivative bounds imply, 
in conjunction with the results in \cite{KSS12}, 
dimension independent convergence rates for QMC quadratures.

In the following, let $\N_0^\N$ denote the set of sequences $\bsnu =
(\nu_j)_{j\geq 1}$ of nonnegative integers $\nu_j$, and let $|\bsnu| :=
\sum_{j\geq 1} \nu_j$. For $|\bsnu|<\infty$, 
we denote the partial
derivative of order $\bsnu$ of $q(\bsy)$ 
with respect to $\bsy$ by
\be\label{eq:dqdy}
\partial^\bsnu_\bsy q(\bsy)
\,:=\,
\frac{\partial^{|\bsnu|}}{\partial^{\nu_1}_{y_1}\partial^{\nu_2}_{y_2}\cdots}q(\bsy)
\;.
\ee
In \cite{CDS1,KSS12,KunothCS2011}, bounds on the derivatives \eqref{eq:dqdy} 
were obtained by an induction argument which strongly relied on 
affine-parametric dependence of $A(\bsy;q)$ on $\bsy$.

Here, we derive 
alternative bounds on $\| (\partial^{\bsnu}_\bsy q)(\bsy)\|_{\cX}$
based on complex variable methods 
which were used also in \cite{CDS2,ScSt11,SS12,SS13,CCS2}.
We shall see that in QMC integration these bounds 
give rise to product weights
at least for a finite (possibly large, but in general operator-dependent)
``leading'' dimension of the parameter space.
The argument is based on 
\emph{holomorphic extension of the parametric integrand functions
into the complex domain} 
(we remark that not all PDE problems afford such 
 extensions and refer to \cite{HoSc12Wave} for an example).
 
In certain cases, the possibility of covering the parameter intervals $[-1,1]$
by a finite number of small balls (whose union is contained
in a tube $\cT_{\rho_j}$ \eqref{eq:TubeDef} 
for a radius $\rho_j >1 $ sufficiently close to $1$) 
is required to verify $(\bsb, p,\eps)$-holomorphy 
for certain nonlinear operator equations, 
see for example \cite[Lemma~5.2]{CCS2}. 
\begin{theorem}\label{thm:DsiboundC}
For every mapping $q(\bsy):U\mapsto \cX$ 
which is $(\bsb, p,\eps)$-{holomorphic} on a polytube $\cT_{\bsrho}$
of
poly-radius $\bsrho = (\rho_j)_{j\geq 1}$ with 
$\rho_j > 1$ satisfying \eqref{eq:rho_b},
there exists a sequence $\bsbeta\in \ell^p(\bbN)$ 
(depending on the sequence $\bsb$ in \eqref{eq:rho_b})
and a partition $\bbN = E \cup E^c$ such that 
the parametric solution $q(\bsy)$ satisfies,
for every $\bsnu\in \bbN_0^\bbN$ with $|\bsnu|<\infty$,
the bound
\be \label{eq:HybBd}
\sup_{\bsy \in U}
\| (\partial^\nu_\bsy q)(\bsy) \|_\cX
\leq
C
\bsnu_E! 
\prod_{j\in E}\beta_j^{\nu_j}  
\times 
|\bsnu_{E^c}|! 
\prod_{j\in E^c}\beta_j^{\nu_j}
\;.
\ee
Here, $E = \{1,2,...,J\}$ for some $J(\bsb)<\infty$ depending on the 
sequence $\bsb$ in \eqref{eq:rho_b}, and for $\bsnu\in \bbN_0^\bbN$,
we set $\bsnu_E := \{ \nu_j : j\in E \}$.
The sequence 
$\bsbeta = (\beta_j)_{j\geq 1}$ satisfies
$\beta_j = 4\| \bsb \|_{\ell^1(\bbN)}/\eps$ for $1 \le j \le J$, 
i.e. 
it is in particular independent of $j$ for $1\leq j \leq J$.
Moreover, 
$\beta_j \lesssim b_j $ for $j>J$ with the implied
constant depending only on $J(\bsb)$ and 
on $\| \bsb \|_{\ell^1(\bbN)}$.
\end{theorem}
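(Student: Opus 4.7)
The plan is to bound $\|\partial^\bsnu_\bsy q(\bsy)\|_\cX$ by applying Cauchy's integral formula coordinate-wise to the holomorphic extension $\bsz \mapsto q(\bsz)$ provided by the $(\bsb,p,\eps)$-holomorphy assumption (cf.\ Theorem \ref{thm:AnalyticLaxMilgram} and Definition \ref{def:peanalytic}). For any $\bsy \in U$ and any finitely supported $\bsnu \in \bbN_0^\bbN$, I select a poly-radius $\bsrho = (\rho_j)_{j\geq 1}$ with $r_j := \rho_j - 1 > 0$ and $\sum_j r_j b_j \leq \eps$, and integrate $q(\bsz)$ over circles of radius $r_j$ around $y_j$ for each $j$ with $\nu_j > 0$. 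Since each such circle lies inside $\cT_{\rho_j}$ and $q$ is holomorphic and bounded by $B_\eps$ on $\tilde{\cO}_\bsrho \supset \cT_\bsrho$, the Cauchy formula yields at once
\[
\|\partial^\bsnu_\bsy q(\bsy)\|_\cX \;\leq\; B_\eps \,\bsnu!\; \prod_{j:\,\nu_j>0} r_j^{-\nu_j},
\]
with $B_\eps$ independent of $\bsrho$. The remainder of the proof is an explicit, $\bsnu$-dependent choice of $\bsrho$ engineered to yield the hybrid product-then-SPOD structure.

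To that end, fix a partition $\bbN = E \cup E^c$ with $E = \{1,\dots,J\}$ and split the $\eps$-budget into $\eps/4$ for $E$ and $3\eps/4$ for $E^c$. For $j \in E$ take the fixed radius $r_j = \eps/(4\|\bsb\|_{\ell^1(\bbN)})$, which consumes at most $\eps/4$ of the budget and contributes the product-weight factor $\bsnu_E!\,\prod_{j \in E}(4\|\bsb\|_{\ell^1(\bbN)}/\eps)^{\nu_j}$. For $j \in E^c$ with $\nu_j > 0$ take the Lagrange-optimal, $\bsnu$-dependent radius $r_j = (3\eps/4)\,\nu_j/(|\bsnu_{E^c}|\,b_j)$, which just saturates the remaining $3\eps/4$; when $|\bsnu_{E^c}|=0$ the $E^c$-block is empty and no choice is required. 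After insertion into the Cauchy bound, the $E^c$-factor becomes $\bsnu_{E^c}!\,\prod_{j\in E^c}(4|\bsnu_{E^c}|b_j/(3\eps\nu_j))^{\nu_j}$, which the two elementary inequalities $\nu_j! \leq \nu_j^{\nu_j}$ and $n^n \leq e^n\,n!$ (the latter from the Taylor series of $e^n$) collapse to $|\bsnu_{E^c}|!\,\prod_{j\in E^c}(4e b_j/(3\eps))^{\nu_j}$, exactly the SPOD-type factor asserted in \eqref{eq:HybBd}.

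The remaining assertions about $\bsbeta$ are immediate. On the finite set $E$ the sequence is the constant $4\|\bsb\|_{\ell^1(\bbN)}/\eps$, hence trivially $\ell^p$-summable there, and on $E^c$ it is a fixed scalar multiple of $\bsb|_{E^c}$, which lies in $\ell^p(\bbN)$ by \eqref{eq:psumpsi}. The threshold $J$ is only required to be finite; the natural choice is the smallest $J$ for which $b_j \leq \eps/(4\|\bsb\|_{\ell^1(\bbN)})$ for all $j > J$, ensuring that the $\bsnu$-dependent $E^c$-radius is smaller than the fixed $E$-radius for large $j$ and so making the hybrid split quantitatively meaningful.

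The only delicate step is verifying that the coordinate-wise Cauchy argument is legitimate with radii depending on $\bsnu$. Three things must be tracked: that only finitely many one-dimensional circle integrals appear (since $\supp\bsnu$ is finite); that the resulting $\bsrho(\bsnu)$ satisfies the admissibility condition \eqref{eq:rho_b} for every $\bsnu$, which holds by construction because the two parts of the $\eps$-budget sum to $\eps$; and that the uniform bound $B_\eps$ is genuinely independent of $\bsrho$, so that it factors out once and for all. Once these are in place the rest is routine arithmetic, and the main intellectual content of the proof is the hybrid choice of radii itself.
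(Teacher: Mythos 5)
Your proposal is correct and follows essentially the same two-step route as the paper: an iterated Cauchy integral bound over an admissible polytube (the paper's Lemma~\ref{lem:supq}, though you integrate over the circles $|z_j-y_j|=\rho_j-1$ directly instead of deforming to $\partial\cT_{\rho_j}$, which only changes harmless constants), followed by a $\bsnu$-dependent split of the $\eps$-budget in \eqref{eq:rho_b} into a constant radius on $E=\{1,\dots,J\}$ and radii proportional to $\nu_j/(|\bsnu_{E^c}|\,b_j)$ on $E^c$, with the same elementary inequalities $\nu_j!\le\nu_j^{\nu_j}$ and $n^n\le e^n n!$ converting the $E^c$ factor to SPOD form. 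The only point to patch is that \eqref{eq:rho_b} requires $\rho_j>1$ for \emph{every} $j$, including $j\in E^c$ with $\nu_j=0$, so you should reserve a small uniform base increment for those coordinates (as the paper does via $\kappa$) rather than spending the full $3\eps/4$ on $\supp\bsnu_{E^c}$; this is a trivial adjustment.
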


The proof of the derivative bound is divided into two steps.
To simplify the notation, we give it for
$|y_j|\leq 1$ and for a poly-radius $\bsrho$ which satisfies
$\rho_j > 1$. 
Later, in Section~\ref{sec:QMC} it is natural to 
consider the parameter domain $[-1/2, 1/2]^{\mathbb{N}}$. The 
assertion for the parameter domain $U = [-1/2,1/2]^\bbN$
then follows via scaling by a factor of $1/2$ 
(see Remark~\ref{rem_interval_change} for details).

In the first step, 
we infer from $(\bsb, p,\eps)$-holomorphy of $q(\bsy)$,
via Cauchy's integral formula, bounds on
$\sup_{\bsy\in [-1,1]^\bbN} \| (\partial^\bsnu_\bsy q )(\bsy)\|_{\cX}$
in terms of the maximum of the analytic continuation
of $q(\bsy)$ to the domain $\cT_\bsrho$ 
of points in the ``polytube'' $\cT_\bsrho$. 
These derivative bounds are valid
{\em for any poly-radius $\bsrho$ which is $(\bsb, p,\eps)$-admissible
      in the sense that \eqref{eq:rho_b} holds.}
The result of the first step is recorded in Lemma~\ref{lem:supq}.

In the second step of the proof, we use a $\bsnu$-dependent 
choice of a $(\bsb, p,\eps)$-admissible poly-radius $\bsrho$ 
for which \eqref{eq:rho_b} holds to obtain the 
($\bsnu$-{\em in}dependent)
weight sequence $\bsbeta$:
for given $\bsnu\in \bbN_0^\bbN$
such that $|\bsnu|<\infty$, we then define a 
$(\bsb, p,\eps)$-admissible poly-radius $\bsrho(\bsnu)$ so that
\eqref{eq:HybBd} is satisfied for this $\bsnu$, with constants
$C_0$ and the sequence $\bsbeta$ independent of $\bsnu$.

In the following let for $\bsnu \in \bbN_0^\bbN$ the support of 
$\bsnu$ be denoted by $ \supp \bsnu := \{ j\in \bbN: \nu_j \ne 0\}\subset \bbN$. 
For a subset $H \subseteq \bbN$, 
we denote its  complement $H^c = \bbN \setminus H$ 
and for a vector $\bsy = (y_j)_{j\ge 1}$,
$\bsy_H = (y_j)_{j \in H}$ denotes its ``restriction'' to $H$.
\begin{lemma}\label{lem:supq}
For every mapping $q(\bsy):U\mapsto \cX$ 
which is $(\bsb, p,\eps)$-holomorphic on a polytube $\cT_{\bsrho}$
of
poly-radius $\bsrho = (\rho_j)_{j\geq 1}$ with 
$\rho_j > 1$ satisfying \eqref{eq:rho_b},
there holds
\[
\sup_{ \bsy_H \in \prod_{j \in H} [-1,1] } 
\| (\partial^\bsnu_\bsy q)(\bsy_H,\bsy_{H^c}) \|_\cX
 \leq \displaystyle 
\sup_{ \bsz_H \in \prod_{j \in H} \cT_{\rho_j} } 
         \| q(\bsz_H,\bsy_{H^c}) \|_\cX
\bsnu!
\prod_{ j \in H }
\frac{\rho_j}{(\rho_j-1)^{\nu_j+1}} \, ,
\]
for every  $\bsnu\in \bbN_0^\bbN$ with $|\bsnu|<\infty$, where
$H = \supp \bsnu$ and for every 
$\bsy_{H^c} \in \prod_{j \in H^c} [-1,1]$.
\end{lemma}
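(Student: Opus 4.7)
The key starting observation is that since $|\bsnu|<\infty$, the index set $H := \supp\bsnu$ is a \emph{finite} subset of $\bbN$, so $\partial^\bsnu_\bsy q$ involves only the finitely many coordinates $\{y_j: j\in H\}$. I would fix an arbitrary $\bsy_{H^c}\in\prod_{j\in H^c}[-1,1]$ and hold it frozen throughout. By the $(\bsb,p,\eps)$-holomorphy hypothesis, the partial map $\bsz_H\mapsto q(\bsz_H,\bsy_{H^c})$ extends holomorphically in each variable $z_j$ ($j\in H$) to $\cT_{\rho_j}$ and is bounded on the finite-dimensional polytube $\prod_{j\in H}\cT_{\rho_j}$. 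This reduces matters to a classical finite-dimensional computation.

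The plan is then to apply the one-variable Cauchy integral formula for the $\nu_j$-th derivative in $z_j$, once per coordinate $j\in H$. The definition \eqref{eq:TubeDef} of $\cT_{\rho_j}$ guarantees that for any $y_j\in[-1,1]$ the closed disc of radius $\rho_j-1$ centered at $y_j$ lies inside $\cT_{\rho_j}$, so one may integrate over a contour $\Gamma_j\subset\cT_{\rho_j}$ enclosing $y_j$. Iterating the one-variable formula (the other coordinates remain in $[-1,1]\subset\cT_{\rho_j}$ at every stage, so the integrand stays holomorphic and bounded in the current variable) yields the representation
\[
(\partial^\bsnu_\bsy q)(\bsy_H,\bsy_{H^c})
= \Bigl(\prod_{j\in H}\frac{\nu_j!}{2\pi i}\Bigr)\oint\cdots\oint
\frac{q(\bsz_H,\bsy_{H^c})\,\prod_{j\in H}dz_j}{\prod_{j\in H}(z_j-y_j)^{\nu_j+1}},
\]
where the $|H|$-fold iterated contour integral runs over $\Gamma_{j}\subset\cT_{\rho_j}$, $j\in H$.

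Now I would apply the standard $M$-$L$ estimate to each contour. Taking $\Gamma_j$ to be the circle of radius $\rho_j-1$ about $y_j$, one has $|z_j-y_j|=\rho_j-1$ on $\Gamma_j$ and contour length $2\pi(\rho_j-1)$, so each coordinate contributes a factor $\nu_j!/(\rho_j-1)^{\nu_j}$ times the supremum of $\|q(\cdot,\bsy_{H^c})\|_\cX$ over $\prod_{j\in H}\cT_{\rho_j}$. Since $\rho_j/(\rho_j-1)\ge 1$, the resulting bound already dominates the slightly weaker factor $\nu_j!\,\rho_j/(\rho_j-1)^{\nu_j+1}$ stated in the lemma. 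Taking the supremum over $\bsy_H\in\prod_{j\in H}[-1,1]$ and multiplying over $j\in H$ gives the asserted inequality; the $\bsnu!$ on the right emerges as $\prod_{j\in H}\nu_j!=\bsnu!$ (since $\nu_j=0$ for $j\notin H$).

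No step presents a genuine obstacle: the hypothesis $(\bsb,p,\eps)$-holomorphy was designed precisely to make the iterated Cauchy argument go through, and every manipulation commutes with the Banach-space norm by the standard theory of Banach-valued holomorphic functions. The only cosmetic decision is the exact form of the constant per coordinate; the authors' choice of $\rho_j/(\rho_j-1)^{\nu_j+1}$ (rather than the sharper $1/(\rho_j-1)^{\nu_j}$) is presumably made for notational convenience when this lemma is later fed into the $\bsnu$-dependent choice of $\bsrho(\bsnu)$ that produces the product/SPOD weight structure of Theorem~\ref{thm:DsiboundC}.
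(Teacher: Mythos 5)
Your proposal is correct and follows essentially the same route as the paper: fix $\bsy_{H^c}$, apply the iterated Cauchy integral formula over circles of radius $\rho_j-1$ centered at $y_j\in[-1,1]$ (contained in $\cT_{\rho_j}$ by \eqref{eq:TubeDef}), and estimate. The only difference is that the paper first deforms each contour to $\partial\cT_{\rho_j}$ and bounds its length, which is where the factor $\rho_j/(\rho_j-1)^{\nu_j+1}$ originates, whereas you estimate directly on the circles and obtain the slightly sharper $1/(\rho_j-1)^{\nu_j}$ per coordinate, correctly observing that this dominates the stated bound.
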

\begin{proof}
The condition \eqref{eq:rho_b} on the poly-radius $\bsrho$ 
implies, with the assumption of $(\bsb,p,\eps)$-holomorphy
of the parametric map $q(\bsy)$, the estimate
\begin{equation}\label{eq-auxiliary-estimate}
\|q(\bsz_H,\bsy_{H^c})\|_{\cX} \leq B
\end{equation}
for some $B \geq 1$ (depending on $\eps$) and
for every $\bsz_H \in \prod_{j \in H} \cT_{\rho_j}$ 
and every $\bsy_{H^c}  \in \prod_{j \in H^c} [-1, 1]$.
To simplify the notation in the following, w.l.o.g. 
we assume that $H=\{1,\ldots,K\}$ for some $K \in \bbN_0$
(this may always be achieved by re-indexing the variables). 
For $\bsb = (b_j)_{j \geq 1}$,
we further define the sequence $\widetilde\bsrho$ by
\[
\widetilde\rho_j=\rho_j+\varepsilon\,,\quad j\in H\,,\quad
\varepsilon
=
\frac{\delta}{\sum_{j\in H} b_j}\,,
\qquad\widetilde\rho_j=\rho_j\,,\quad j \in H^c \,,
\]
for some small real number $\delta>0$.
Then, for $\delta > 0$ sufficiently small, 
also $\widetilde\rho$ is an admissible poly-radius,
in the sense that the parametric solution admits a
holomorphic continuation to the set 
$\cT_{\tilde{\bsrho}}\subset \bbC^\bbN$. In particular, $q_H$ is analytic in an open neighborhood of $U_{\rho,H}$, 
where we are writing 
$q_H(z_1,\ldots,z_K)= q_H(\bsz_H)\equiv q(\bsz_H,0)$.

Cauchy's integral formula can be applied 
successively with respect to each coordinate $z_j\in \cT_{\rho_j}$
with $j\in H$ 
to obtain 
for every $\bsy\in U$ the representation
\[
q(y_1,\ldots,y_K,\bsy_{H^c})
=
(2\pi i)^{-K}\oint_{\Gamma'_1(y_1)}\cdots\oint_{\Gamma'_K(y_K)}
\frac{q(\bsz'_H,\bsy_{H^c})}{(z'_1-y_1)\cdots(z'_K - y_K)}\,
d z'_1\cdots d z'_K\,,
\]
where now $\Gamma'_j(y_j)\subset \bbC$ denotes the circle
with radius $\rho_j-1$ and center $y_j\in [-1,1]$
for $j\in H$. Then, for all $\bsy\in U$,
the integration domains are contained in $\prod_{j \in H} \cT_{\rho_j}$.
Changing the path of integration from $\Gamma'_j(y_j)$ to 
$\partial \cT_{\rho_j}$, the boundary of $\cT_{\rho_j}$,
and differentiating under the integral sign 
in Cauchy's integral formula now yields
for every $\bsy\in U$
\[
(\partial^\bsnu_\bsy q)(\bsy)
=
\bsnu!(2\pi i)^{-K}
\oint_{\partial \cT_{\rho_1}}\cdots\oint_{\partial \cT_{\rho_K}}
\frac{q(z'_1,\ldots,z'_K,\bsy_{H^c})}{(z'_1-y_1)^{\nu_1+1}\cdots (z'_K-y_K)^{\nu_K+1}}
	\,d z'_1\cdots d z'_K
\]
since $\Gamma'_j(y_j)\subset \cT_{\rho_j}$.
Then,
\eqref{eq-auxiliary-estimate}, $|\partial \cT_\kappa| = 2(2+(\kappa-1)\pi)$
and a standard estimate 
for the integral yields the derivative bound: 
for every fixed $\bsy_{H^c}\in U_{H^c}$, 
\be 
\label{eq:DnuyqEst}
\begin{array}{l}
\displaystyle
\sup_{\bsy_H\in \prod_{j \in H}  [-1,1]} 
\| (\partial^\bsnu_\bsy q)(\bsy_H,\bsy_{H^c}) \|_\cX 
\\
\leq \displaystyle 
\sup_{\bsz_H \in \prod_{j \in H} \partial \cT_{\rho_j} }  \| q(\bsz_H,\bsy_{H^c}) \|_\cX
\frac{\bsnu!}{(2\pi)^{K}}
\prod_{ j \in H } |\partial \cT_{\rho_j}| (\rho_j-1)^{-\nu_j-1} 
\\
\leq \displaystyle
\sup_{\bsz_H \in \prod_{j \in H} \cT_{\rho_j}}  \| q(\bsz_H,\bsy_{H^c}) \|_\cX
\frac{\bsnu!}{(2\pi)^{K}}
\prod_{j \in H} 
2(2+(\rho_j-1)\pi) (\rho_j-1)^{-\nu_j-1} \\
= \displaystyle 
\sup_{\bsz_H \in \prod_{j \in H}  \cT_{\rho_j} }  \| q(\bsz_H,\bsy_{H^c}) \|_\cX
\bsnu!
\prod_{ j \in H }
(\frac{2}{\pi}+(\rho_j-1)) (\rho_j-1)^{-(\nu_j+1)}
\\
\leq \displaystyle 
\sup_{\bsz_H \in { \prod_{j \in H} \cT_{\rho_j} } }  \| q(\bsz_H,\bsy_{H^c}) \|_\cX
\bsnu!
\prod_{ j \in H }
\frac{\rho_j}{(\rho_j-1)^{\nu_j+1}}
\;.
\end{array}
\ee
Here we used 
$$
\inf_{y_j \in [-1,1], z_j'\in \partial \cT_{\rho_j}} \{|z_j'-y_j|\} \geq \rho_j - 1 > 0
\;.
$$

\end{proof}
%
\noindent
{\em Proof of Theorem~\ref{thm:DsiboundC}}
In this proof, we will establish \eqref{eq:HybBd} using
the result in Lemma~\ref{lem:supq}.
To obtain these derivative bounds, 
for given $\bsnu\in \IN_0^\IN$ with $|\bsnu|<\infty$
and for  fixed 
$\eps > 0$, we choose, 
with $B\geq 1$ as in \eqref{eq-auxiliary-estimate},
$J = J(\eps,\bsb) \in \N_0$ as
\begin{equation}\label{eq:defS}
J (\eps,\bsb) 
:= 
{\rm min} 
\left\{ s\in \N \mid \sum_{j>s} b_j \leq \frac{\eps}{4B} \leq \frac{\eps}{4} 
\right\}
\;.
\end{equation}
Since $\bsb\in \ell^1(\N)$, \eqref{eq:defS} 
defines for every $\eps > 0$ a unique $J(\eps,\bsb) \in \N$ which, 
as
{\em we emphasize,  is independent of the particular multi-index $\bsnu$.}
With $J = J(\bsb,\eps)$, 
we define the set $E:=\{1,2,...,J\} \subset \N$ 
and define $E^c := \N \backslash E$.
For any multi-index $\bsnu\in \cF$,
we then introduce the
partition $\bsnu = (\bsnu_E,\bsnu_{E^c})$ where
$\bsnu_E := \{ \nu_1,\nu_2,...,\nu_J \}$ and 
$\bsnu_{E^c} := \{ \nu_{J + 1}, \nu_{J + 2},... \}$.
Next, 
we define $\kappa:=1+\eps / (4\| \bsb \|_{\ell^1(\N)}) > 1$
and introduce, for $\bsnu\in \cF$,
the poly-radius $\bsrho(\bsnu)$ by
\be\label{eq:polyrho}
\rho_j := 
\left\{
\begin{array}{ll}
\kappa & \mbox{for } j\in E\;,
\\
\kappa + 
\displaystyle
\frac{\eps}{2b_j} \frac{\nu_j}{1+|\bsnu_{E^c} |} 
& \mbox{for } j\in E^c 
\;.
\end{array}
\right.
\ee
With this choice of $\bsrho(\bsnu)$ we verify 
that \eqref{eq:rho_b} holds. 
This follows since
$$
\begin{array}{rcl}
\displaystyle
\sum_{j\geq 1} 
(\rho_j-1) b_j
&\leq& \displaystyle  
(\kappa-1) \sum_{j=1}^J b_j 
+ 
\sum_{j> J} b_j\left(\kappa -1 + \frac{\eps}{2b_j} \frac{\nu_j}{1+|\bsnu_{E^c} |}\right)
\\
& = &\displaystyle 
\frac{\eps}{4\| \bsb \|_{\ell^1}} \sum_{j=1}^J b_j 
+
\sum_{j > J} b_j
\left(\frac{\eps}{4\| \bsb \|_{\ell^1}} + \frac{\eps}{2b_j} \frac{\nu_j}{1+|\bsnu_{E^c}|}\right)
\\
& \leq &\displaystyle 
\frac{\eps}{4} 
+ 
\frac{\eps}{4}
+
\frac{\eps}{2}
\frac{|\bsnu_{E^c}|}{1+|\bsnu_{E^c} |} \leq \eps
\;.
\end{array}
$$
We introduce the notation
$\phi(\rho) := \frac{\pi}{2} \rho/(\rho-1)$ for $\rho >1$.
The property $\phi'(\rho)<0$ for $\rho >1$ implies,
for $\rho_j$ as in \eqref{eq:polyrho}, that
$\phi(\rho_j) \leq \phi(\kappa)$ for all $j\in \N$. 
Further we have $\phi(\rho) \ge 1$ for all $\rho > 1$.

In the following we prove a bound on $\bsnu! \prod_{j \in H} \frac{\rho_j}{(\rho_j-1)^{\nu_j+1}}$ 
(where $H = \supp \bsnu$), which appears in Lemma~\ref{lem:supq}.
We obtain, assuming 
w.l.o.g. that $J \leq L := \max\{ j: \nu_j > 0 \}$, 
that there holds the bound
$$
\begin{array}{l}
\displaystyle 
\bsnu!
\prod_{ j \in H }
\frac{\rho_j}{(\rho_j-1)^{\nu_j+1}}
= 
\bsnu_E!\bsnu_{E^c} !
\prod_{ 1\leq j \leq L} \frac{2}{\pi} \phi(\rho_j)(\rho_j-1)^{-\nu_j}
\\
\leq \displaystyle 
\bsnu_E! \left\{ \prod_{j\in E} \phi(\kappa) \left(\frac{4\| \bsb \|_{\ell^1}}{\eps}\right)^{\nu_j} \right\}
\times
\bsnu_{E^c}! \left\{ \prod_{ \satop{ j\in E^c }{\nu_j > 0} } 
\phi(\rho_j) \left(\frac{2b_j}{\eps \nu_j}(1+|\bsnu_{E^c}|) \right)^{\nu_j} 
\right\} 
\\
=: \displaystyle 
\bsnu_E! \bsnu_{E^c}! \bsbeta_E(\bsnu) \bsbeta_{E^c}(\bsnu) 
\;.
\end{array}
$$
We estimate $\bsbeta_E(\bsnu)$ and $\bsbeta_{E^c}(\bsnu)$.
We observe that in case that all $\nu_j \geq 1$ for $j\in E$
$$
{\bsbeta}_E(\bsnu) 
=
\prod_{j=1}^J \phi(\kappa)  
\left(\frac{4\| \bsb \|_{\ell^1}}{\eps}\right)^{\nu_j} 
\leq
\left( \phi(\kappa) \frac{4 \| \bsb \|_{\ell^1}}{\eps} \right)^{|\bsnu_E|} \, ,
$$
which is of product weight form.
In case some or all $\nu_j=0$ for $j\in E$, 
we find the bound
$$
\bsbeta_E(\bsnu)
\leq (\phi(\kappa))^J
\left( \frac{4 \| \bsb \|_{\ell^1}}{\eps} \right)^{|\bsnu_E|}\;,
$$
where we recall that $J = J(\bsb,\eps)$ does not depend on $\bsnu$.

Next we consider $\bsbeta_{E^c}(\bsnu)$. 
Using $\phi(\rho_j) \leq \phi(\kappa)$, we obtain that
$$
\begin{array}{rcl}
\bsbeta_{E^c}(\bsnu) 
& \leq & \displaystyle
\phi(\kappa)^{| \bsnu_{E^c} |}
\prod_{j\in E^c: \nu_j > 0} 
\left(\frac{2b_j}{\eps} \right)^{\nu_j} \left(\frac{1+|\bsnu_{E^c}|}{\nu_j}\right)^{\nu_j} 
\\
& \leq & \displaystyle
\prod_{j\in E^c: \nu_j > 0} 
\left(\frac{2\phi(\kappa) b_j}{\eps} \right)^{\nu_j} 
\left(\frac{1+|\bsnu_{E^c}|}{\nu_j}\right)^{\nu_j}
\;.
\end{array}
$$
We set 
$d_j := 2\phi(\kappa) b_j/\eps$, and $\bar{d}_j := e d_j$ for $j\in E^c$. 
Then 
$$
\bsbeta_{E^c}(\bsnu) 
\leq 
\prod_{j>J} \left(d_j \frac{1+|\bsnu_{E^c}|}{\nu_j}\right)^{\nu_j}
= 
\frac{(1+|\bsnu_{E^c}|)^{|\bsnu_{E^c}|}}{\bsnu_{E^c}^{\bsnu_{E^c}}}
\prod_{j>J} d_j^{\nu_j}
\;.
$$
%
%
Stirling's approximation implies that for all $n \in \bbN$ we have 
$\sqrt{2\pi} n^{n+1/2} \le n! e^n \le e n^{n+1/2}$. 
This also implies that $(1+n)^n \le n^n e^2/\sqrt{2\pi} \le 3 n^n$. 
Thus
$$
\frac{(1+|\bsnu_{E^c}|)^{|\bsnu_{E^c}|}}{\bsnu_{E^c}^{\bsnu_{E^c}}} \le 3 \frac{|\bsnu_{E^c}|^{|\bsnu_{E^c}|}}{\bsnu_{E^c}^{\bsnu_{E^c}}} \le 3 \frac{|\bsnu_{E^c}|! e^{|\bsnu_{E^c}|} }{\sqrt{2\pi} \sqrt{|\bsnu_{E^c}|}} \prod_{\satop{ j \in E^c }{\nu_j > 0} } \frac{e \sqrt{\nu_j}}{\nu_j! e^{\nu_j}} \le \frac{3}{\sqrt{2\pi}} \frac{|\bsnu_{E^c}|!}{\bsnu_{E^c}!} \frac{ \prod_{j \in E^c} e \sqrt{\nu_j} }{\sqrt{|\bsnu_{E^c}| } } \; .
$$
Since $e \sqrt{\nu_j} \le e^{\nu_j}$ for integers $\nu_j > 0$, we obtain
$$
\bsbeta_{E^c}(\bsnu) 
\leq \frac{3}{\sqrt{2\pi}}
\frac {|\bsnu_{E^c}|!}{\bsnu_{E^c}!} \bar \bsd^{\bsnu_{E^c}}  \; .%
$$
%
Combining all bounds, we find 
there exists a constant $\hat{C}>0$ 
(depending on $p$, $\eps$, and on $\bsb$) 
such that there holds, 
for every $\bsnu\in \IN_0^\IN$ with $|\bsnu|<\infty$,
the bound
$$
\begin{array}{rcl}
\displaystyle 
\bsnu_E! \bsnu_{E^c}! \bsbeta_E(\bsnu) \bsbeta_{E^c}(\bsnu)
& \leq & \displaystyle
\frac{3}{\sqrt{2\pi}} \phi(\kappa)^J  
\left(\bsnu_E! \prod_{j=1}^J
\left(\frac{4\| \bsb \|_{\ell^1}}{\eps}\right)^{\nu_j} 
\right)
\times 
| \bsnu_{E^c} |! \prod_{j>J} \bar{d}_j^{\bsnu_j} 
\\
& = & \displaystyle
\hat{C}
\bsnu_E! 
\prod_{j\in E}\beta_j^{\nu_j}  
\times 
|\bsnu_{E^c}|! 
\prod_{j\in {E^c}}\beta_j^{\nu_j} \;.
\end{array}
$$
Here, 
$\beta_j = 4\| \bsb \|_{\ell^1}/\eps$ for $1\leq j \leq J$
is independent of $\bsnu$
and we have $\beta_j = \bar{d}_j \sim b_j$ for $j>J$.
By the choice of $J(\eps,\bsb)$, $\hat{C} = e\phi(\kappa)^{J(\eps,\bsb)}$ 
depends on $\bsb$ and $\eps$, but not on $\bsnu$.
\qed

\begin{remark}\label{remk:ChoicS}
We see from the proof of Theorem \ref{thm:DsiboundC}
and, in particular, from  \eqref{eq:defS},
that the ``crossover-dimension'' $J (\bsb,\eps)$ between product weights
and the more general 
hybrids of product and of SPOD weights,
depends on the precise structure of the 
decay of the sequence $\bsb$
(rather than only on the summability exponent).
It is therefore of some interest to identify cases where $J$ is large. 
This occurs for sequences $\bsb$ which exhibit a 
``plateau'' up to dimension $J>>1$, i.e.
\begin{equation}\label{eq:KLplateau}
b_1 = b_2 = ... = b_J > b_{J + 1} \geq b_{J + 2} \geq ... \downarrow 0 
\;.
\end{equation}
Such cases appear, for example, in \KL expansions of random fields 
$u(\bsy)$, given by \eqref{eq:uviapsi}, 
with two-point correlation kernels which concentrate on a 
(non-dimensional) spatial correlation length scale 
$0 < \lambda << 1$, in $D\subset \IR^d$ a bounded domain.
In this case, typically $J \sim 1/\lambda^d$.
E.g. for $\lambda \sim 0.01$ in three space dimensions,
$J\sim 10^6$.
\end{remark}
To exploit the derivative bounds \eqref{eq:HybBd},
it is of utmost importance to have a fast CBC 
construction of higher-order QMC rules which are able to exploit
\eqref{eq:KLplateau}.
We address a suitable CBC construction of corresponding QMC rules and 
estimates of the QMC errors incurred by these rules
in the ensuing sections, thereby extending \cite{BDGP11,BDLNP12,DKGNS13}.
%
\section{Quasi-Monte Carlo integration}
\label{sec:QMC}
In Theorem \ref{thm:DsiboundC} we established bounds
on the derivatives of $(\bsb,p,\eps)$-analytic solution
families of smooth, nonlinear parametric operator equations
with $(\bsb,p,\eps)$-analytic operators.
Here, we establish error bounds for QMC quadratures 
for these integrand functions. The convergence
estimates obtained here are uniform in the dimension $s$ of the parameter
domain. The application of the QMC quadratures to the 
formally countably-parametric problems must therefore be prepared
by {\em dimension truncation}, i.e. we consider \eqref{eq:mainstrunc}
and its (Petrov-)Galerkin discretization \eqref{eq:parmOpEqh_trun}.
As we explained in the introduction, in order to approximate the
mathematical expectation of the random solutions
by QMC methods, we truncate the infinite sum in
\eqref{eq:uviapsi} to a finite number of $s\geq 1$ terms.
\subsection{Higher-order QMC quadrature based on digital nets}
\label{sec:HoQMCDigNet}
For an integrand $G\in C^0([0,1]^s)$,
we want to approximate the $s$-dimensional integral
\begin{equation}\label{eq:IsF}
 I_s(G) \,:=\,
 \int_{[0,1]^s} G(\bsy) \,\rd\bsy
\end{equation}
by an equal weight QMC quadrature rule of the form
\begin{equation}\label{eq:QNs}
  Q_{N,s}(G) \,:=\,
  \frac{1}{N} \sum_{n=0}^{N-1} G(\bsy_n)\;,
\end{equation}
with judiciously chosen points $\bsy_0,\ldots,\bsy_{N-1} \in [0,1]^s$. 
For completeness we repeat the necessary definitions 
and results from \cite{DKGNS13} in the following.
\begin{definition}[Norm and function space] \label{def_F_norm}
Let $\alpha, s\in\bbN$, $1\le q \le \infty$ and $1\le r \le \infty$, 
and let 
$\bsgamma = (\gamma_\setu)_{\setu\subset\bbN}$ 
be a collection of nonnegative real numbers, known as \emph{weights}. 
Assume further that for every $s\in \IN$, the integrand function
$G: [0,1]^s \to \mathbb{R}$ 
has partial derivatives of orders up to $\alpha$ with respect
to each variable. 
Set $0/0 := 0$ and $a/0 := \infty$ for $a > 0$.
We quantify the smoothness of the integrand function $G$
in \eqref{eq:IsF} 
by the higher order unanchored Sobolev norm
\footnote{We point out that \eqref{eq:defFabs} differs from the 
expression for the norm given in \cite{DKGNS13} which contains
a misprint and which should read as in \eqref{eq:defFabs}.}
\begin{equation}\label{eq:defFabs}
\begin{array}{rl}
\|G\|_{s,\alpha,\bsgamma,q,r}
& \displaystyle
:=
 \Bigg( \sum_{\setu\subseteq\{1:s\}} \Bigg( \gamma_\setu^{-q}
 \sum_{\setv\subseteq\setu} \sum_{\bstau_{\setu\setminus\setv} \in \{1:\alpha\}^{|\setu\setminus\setv|}} \\
&\qquad\qquad\quad
 \displaystyle
 \int_{[0,1]^{|\setv|}} \bigg|\int_{[0,1]^{s-|\setv|}} \!
 (\partial^{(\bsalpha_\setv,\bstau_{\setu\setminus\setv},\bszero)}_\bsy G)(\bsy) \,
\rd \bsy_{\{1:s\} \setminus\setv}
 \bigg|^q \rd \bsy_\setv \Bigg)^{r/q} \Bigg)^{1/r}, 
\end{array}
\end{equation}
with the obvious modifications if $q$ or $r$ is infinite.
Here $\{1:s\}$
is a shorthand notation for the set $\{1,2,\ldots,s\}$, and
$(\bsalpha_\setv,\bstau_{\setu\setminus\setv},\bszero)$ denotes a sequence
$\bsnu$ with $\nu_j = \alpha$ for $j\in\setv$, $\nu_j = \tau_j$ for
$j\in\setu\setminus\setv$, and $\nu_j = 0$ for $j\notin\setu$.
Let $\calW_{s,\alpha,\bsgamma,q,r}$ denote the Banach space of all such
functions $F$ with finite norm.
\end{definition}

By the definition of $0/0$ and $a/0$,  if $\gamma_\setu = 0$ for some $\setu$ 
then the corresponding term 
$ \sum_{\setv\subseteq\setu} \sum_{\bstau_{\setu\setminus\setv} \in \{1:\alpha\}^{|\setu\setminus\setv|}}
    \int_{[0,1]^{s-|\setv|}}
   (\partial^{(\bsalpha_\setv,\bstau_{\setu\setminus\setv},\bszero)}_\bsy G)(\bsy) \,\rd \bsy_{\{1:s\} 
   \setminus\setv} $ 
has to be $0$ for all $G \in \calW_{s, \alpha, \bsgamma, q, r}$.

The following result is an upper bound on the worst-case integration error 
in $\calW_{s, \alpha,\bsgamma, q, r}$ using a QMC rule based on a digital net, 
see \cite[Theorem~3.5]{DKGNS13}.

\begin{theorem}[Worst case error bound] \label{thm:wce}
Let $\alpha, s\in\bbN$ with $\alpha>1$, $1\le q\le \infty$ and $1\le r\le \infty$, and let
$\bsgamma = (\gamma_\setu)_{\setu\subset\bbN}$ denote a collection of
weights. Let $r'\ge 1$ satisfy 
$1/r + 1/r' = 1$. Let $b$ be
prime, $m\in\bbN$, and let $\calS=\{\bsy_n\}_{n=0}^{b^m-1}$ denote a
digital net with generating matrices $C_1,\ldots,C_s\in\bbZ_b^{\alpha
m\times m}$. Then we have
\[
  \sup_{\|G\|_{s,\alpha,\bsgamma,q,r} \le 1}
  \left| \frac{1}{b^m} \sum_{n=0}^{b^m-1} G(\bsy_n) - \int_{[0,1]^s} G(\bsy) \,\mathrm{d} \bsy \right|
  \,\le\, e_{s,\alpha,\bsgamma,r'}(\calS)\;,
\]
with
\begin{align} \label{def-B}
  e_{s,\alpha,\bsgamma,r'}(\calS)
  \,:=\, \Bigg(\sum_{\emptyset \neq \setu \subseteq \{1:s\}}
  \bigg(C_{\alpha,b}^{|\setu|}\, \gamma_\setu \sum_{\bsk_\setu \in \setD_\setu^*}
  b^{-\mu_{\alpha}(\bsk_\setu)} \bigg)^{r'} \Bigg)^{1/r'}\;.
      \end{align}
Here $\setD_\setu^*$ is the ``dual net without $0$ components'' projected
to the components in $\setu$, defined by
\begin{equation} \label{dual}
  \setD_\setu^* \,:=\, \left\{ \bsk_\setu \in \bbN^{|\setu|}\,:\,
  \sum_{j\in\setu} C_j^\top {\rm tr}_{\alpha m}(k_j) = \bszero \in \bbZ_b^m \right\} \;,
\end{equation}
where ${\rm tr}_{\alpha m}(k) := (\varkappa_0, \varkappa_1, \ldots,
\varkappa_{\alpha m-1})^\top$ if $k = \varkappa_0 + \varkappa_1 b
+\varkappa_2 b^2 + \cdots$ with $\varkappa_i\in \{0,\ldots,b-1\}$.
Moreover, we have $\mu_{\alpha}(\bsk_\setu) = \sum_{j\in\setu}
\mu_\alpha(k_j)$ with
\begin{equation} \label{def-mu-k}
  \mu_\alpha(k)
  \,:=\,
  \begin{cases}
  0 & \mbox{\,if } k = 0, \\
  a_1 + \cdots + a_{\min(\alpha,\rho)} & 
  \begin{aligned}
   \mbox{if }
  k &= \kappa_1 b^{a_1-1} + \cdots + \kappa_\rho b^{a_\rho-1} \mbox{ with} \\
    &\kappa_i\in \{1,\ldots,b-1\} \mbox{ and } a_1>\cdots>a_\rho>0,
  \end{aligned}
  \end{cases}
\end{equation}
and
\begin{align}\label{eq:Cab}
 &C_{\alpha,b}
 \,:=\,
 \max\left(\frac{2}{(2\sin\frac{\pi}{b})^{\alpha}},\max_{1\le z\le\alpha-1}
 \frac{1}{(2\sin\frac{\pi}{b})^z}\right)
 \nonumber
 \\
 &\qquad\qquad\qquad\times
 \left(1+\frac{1}{b}+\frac{1}{b(b+1)}\right)^{\alpha-2}
 \left(3 + \frac{2}{b} + \frac{2b+1}{b-1} \right)\;.
\end{align}
\end{theorem}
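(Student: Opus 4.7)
The plan is to follow the standard Walsh-series route used in higher-order digital net theory, essentially reproducing the argument of \cite{DKGNS13}. First I would expand the integrand in its $b$-adic Walsh series
\[
G(\bsy) = \sum_{\bsk \in \bbN_0^s} \widehat{G}(\bsk)\, \wal_{\bsk}(\bsy),
\qquad
\widehat{G}(\bsk) = \int_{[0,1]^s} G(\bsy)\, \overline{\wal_{\bsk}(\bsy)}\, \rd \bsy,
\]
which converges for functions in $\calW_{s,\alpha,\bsgamma,q,r}$ since $\alpha\ge 2$ ensures enough smoothness. The digital net structure enters through the character sum identity
\[
\frac{1}{b^m}\sum_{n=0}^{b^m-1}\wal_{\bsk}(\bsy_n)
= \begin{cases} 1 & \text{if } \bsk\in\setD^*,\\ 0 & \text{otherwise,}\end{cases}
\]
where $\setD^*$ is the dual net \eqref{dual} extended with zero components allowed. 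Subtracting $\widehat{G}(\bszero) = I_s(G)$, this immediately yields the identity
\[
\frac{1}{b^m}\sum_{n=0}^{b^m-1}G(\bsy_n) - I_s(G)
= \sum_{\bsk\in\setD^*\setminus\{\bszero\}} \widehat{G}(\bsk),
\]
thereby reducing the worst-case error to bounding this sum.

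Second, the technical heart of the proof is a pointwise bound on the Walsh coefficients in terms of the smoothness of $G$. For $\bsk$ whose support is $\setu\subseteq\{1:s\}$, one applies integration by parts componentwise along with the Haar-like primitive functions of the one-dimensional Walsh functions. A careful iteration (of order $\min(\alpha,\rho_j)$ in each coordinate $j\in\setu$, where $\rho_j$ is the number of nonzero $b$-adic digits of $k_j$) produces a factor $b^{-\mu_\alpha(k_j)}$ per coordinate together with a constant $C_{\alpha,b}$ arising from the $L^\infty$-bounds on the iterated Walsh primitives; the explicit form \eqref{eq:Cab} comes from summing a geometric series in $1/b$ plus the Haar-term factor $(1+1/b+1/(b(b+1)))^{\alpha-2}$. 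The resulting estimate has the schematic shape
\[
\bigl|\widehat{G}(\bsk)\bigr|
\le C_{\alpha,b}^{|\setu|}\, b^{-\mu_\alpha(\bsk_\setu)}
\int_{[0,1]^{|\setv|}}\left|\int_{[0,1]^{s-|\setv|}}
(\partial^{(\bsalpha_\setv,\bstau_{\setu\setminus\setv},\bszero)}_\bsy G)(\bsy)\,
\rd\bsy_{\{1:s\}\setminus\setv}\right|\rd\bsy_\setv,
\]
where $\setv\subseteq\setu$ and $\bstau_{\setu\setminus\setv}\in\{1:\alpha\}^{|\setu\setminus\setv|}$ are determined by the digits of $\bsk_\setu$ (specifically $\setv$ collects the coordinates where the iteration uses the full $\alpha$ integrations and $\bstau$ encodes the partial ones). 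This is the main obstacle: the combinatorics of carrying an $\alpha$-fold integration by parts against Walsh functions with varying digit expansions must be tracked in complete generality, and is where the paper \cite{DiPi07,DiPi10} does the heavy lifting.

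Third, I would split the dual-net sum according to support, $\setD^*\setminus\{\bszero\}=\bigsqcup_{\emptyset\neq\setu\subseteq\{1:s\}}\setD_\setu^*$, and insert the weights trivially via $1 = \gamma_\setu\cdot\gamma_\setu^{-1}$:
\[
\Bigl|\sum_{\bsk\in\setD^*\setminus\{\bszero\}} \widehat{G}(\bsk)\Bigr|
\le \sum_{\emptyset\neq\setu\subseteq\{1:s\}}\gamma_\setu\!\!\sum_{\bsk_\setu\in\setD_\setu^*}\gamma_\setu^{-1}|\widehat{G}(\bsk)|.
\]
Applying Hölder's inequality in $\ell^{r'}$ over the sets $\setu$, combined with the coefficient bound above summed over the possible $\setv$ and $\bstau_{\setu\setminus\setv}$, produces the claimed factor $e_{s,\alpha,\bsgamma,r'}(\calS)$ multiplied by $\|G\|_{s,\alpha,\bsgamma,q,r}$, upon invoking the definition \eqref{eq:defFabs}. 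Taking the supremum over the unit ball of $\calW_{s,\alpha,\bsgamma,q,r}$ gives the stated worst-case error bound.

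Finally, I would verify sharpness of the Hölder step: the exponent pairing $(q,r)$ in the norm and $(q',r')$ in the coefficient sum is arranged precisely so that the inner $|\setv|$-dimensional $L^q$ bound on derivatives is matched by an $\ell^{q'}$ sum over the relevant digit patterns within a fixed $\bsk_\setu$, while the outer $r'$-power sum over $\setu$ matches $r$ in the norm. The only routine bookkeeping that remains is combining the constants $C_{\alpha,b}^{|\setu|}$ inside the weight factor, which is done by absorbing them into the $|\setu|$-dependent factor inside \eqref{def-B}. No further QMC-specific tools are needed.
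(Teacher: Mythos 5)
The paper does not actually prove this theorem: it is imported verbatim from \cite[Theorem~3.5]{DKGNS13}, and your outline correctly reconstructs the standard argument used there --- Walsh expansion combined with the dual-net character identity, the Walsh-coefficient decay bound (the source of $\mu_\alpha$ and of the explicit constant $C_{\alpha,b}$), and a H\"older step pairing $(q,r)$ with $(q',r')$ against the norm \eqref{eq:defFabs}. Like the paper, you defer the coefficient-decay estimate to \cite{D08,DiPi07,DiPi10}, so at the level of detail the paper itself offers, nothing is missing.
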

\begin{remark}\label{rem_Y}
For the special but important case $b=2$, Yoshiki~\cite{Y15} 
achieved an improvement of the constant $C_{\alpha,2}$. 
He showed that one can choose $C_{\alpha, 2} = 2^{-1/q'}$, where 
$1 \le q' \le \infty$ is the H\"older conjugate of $q$, 
i.e. $1/q + 1/q'=1$, and $q$ is the parameter appearing 
in the norm \eqref{eq:defFabs}.
\end{remark}
We recall the special case where the integrand
$G(\bsy)$ is a composition of a continuous, linear functional $\LinFun(\cdot) \in \bcX'$
with the (Petrov-)Galerkin approximation $q^s_h(2 \bsy-\bsone)$ 
of the dimension-truncated, parametric and
$(\bsb,p,\eps)$-holomorphic, operator equation \eqref{eq:NonOpEqn}.  
In this case, 
for every $s\in N$ and for every $h>0$ sufficiently small,
the integrand functions
$G(\bsy) := (\LinFun \circ q^s_h)(\bsy_{\{1:s\}})$ are, likewise, 
$(\bsb,p,\eps)$-holomorphic {\em uniformly w.r.t. $s\in \IN$ and to $h>0$}.
By Theorem \ref{thm:DsiboundC}, they 
satisfy the derivative estimates \eqref{eq:HybBd}
uniformly w.r.t. $s\in \IN$ and to $h>0$.
For integrand functions $G(\bsy)$ which satisfy \eqref{eq:HybBd}, 
we proved in \cite{DKGNS13} convergence rates of QMC quadratures
which are based on higher order digital nets. Precisely, 
we showed in \cite[Section~3]{DKGNS13} a special case of the following result.
\begin{proposition}\label{prop:main1}
Let $s\ge 1$ and $N = b^m$ for $m\ge 1$ and prime $b$. 
Let $\bsbeta = (\beta_j)_{j\ge 1}$ be a sequence of positive numbers, 
and denote by $\bsbeta_s = (\beta_j)_{1\le j \le s}$ its $s$-term
truncation. 
Assume that
\begin{equation} \label{p-sum}
  \exists\, 0<p\le 1 : \quad \sum_{j=1}^\infty \beta_j^p < \infty\;.
\end{equation}
Define, for $0<p<1$ as in \eqref{p-sum},
\begin{equation} \label{alpha}
  \alpha \,:=\, \lfloor 1/p \rfloor +1 \;.
\end{equation}
Consider integrand functions $G(\bsy)$ 
whose mixed partial derivatives 
of order $\alpha$ satisfy
\begin{equation} \label{eq:like-norm}
\forall\, \bsy\in U \;\forall s\in \IN \;
\forall\, \bsnu \in \{0, 1, \ldots, \alpha\}^s:
\quad
 | (\partial^{\bsnu}_\bsy G)(\bsy)| 
\,\le\, 
c(G) \bsnu_E! 
\prod_{j\in E}\beta_j^{\nu_j}  
\times 
|\bsnu_{E^c}|! 
\prod_{j\in {E^c}}\beta_j^{\nu_j}
\end{equation}
for some fixed integer $J\in \mathbb{N}$ 
where $E = \{1, 2, \ldots, J\}$ and ${E^c} = \mathbb{N} \setminus E$,
and where $c(G)>0$ is independent of $\bsy$, $s$ and of $\bsnu$.
Then, for every $N\in \N$, 
an interlaced polynomial lattice rule of
order $\alpha$ with $N$ points can be constructed using a fast
component-by-component algorithm, using
$\calO(\alpha \left(\min\{s, J\} + \alpha (s-J)_+ \right) N \log N)$ 
operations, plus $\calO(\alpha^2 (s-J)_+^2 N)$ update cost, plus 
$\calO(N + \alpha (s- J)_+ N)$ memory cost, where $(w)_+ = \max\{0, w\}$, 
such that there holds the error bound
\begin{equation}\label{thebound}
\forall s,N \in \N:\quad 
  |I_s(G) - Q_{N,s}(G)|
  \,\le\, C_{\alpha,\bsbeta,b,p}\, N^{-1/p} \;,
\end{equation}
where $C_{\alpha,\bsbeta,b,p} < \infty$ is a constant independent of $s$
and $N$.
\end{proposition}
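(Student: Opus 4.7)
The plan is to instantiate the abstract QMC error bound of Theorem~\ref{thm:wce} with a \emph{hybrid weight sequence} $\bsgamma$ that mirrors the structure of the derivative bound \eqref{eq:like-norm}. For each finite $\setu\subset\bbN$ set $\setu_E=\setu\cap E$ and $\setu_{E^c}=\setu\cap E^c$, and adopt the ansatz
\begin{equation*}
\gamma_\setu \,=\, \Bigl(\prod_{j\in\setu_E}\Gamma_j\Bigr)\cdot |\setu_{E^c}|!\,\prod_{j\in\setu_{E^c}}\Lambda_j
\end{equation*}
for positive sequences $(\Gamma_j)_{j\in E}$ and $(\Lambda_j)_{j\in E^c}$ to be fixed later. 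This ansatz is of product type on $E$ and of SPOD type on $E^c$, matching the two factors in \eqref{eq:like-norm} separately.

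First I would verify that $G\in\calW_{s,\alpha,\bsgamma,q,r}$ and bound its norm explicitly for a convenient choice of $q,r$ (for instance $q=2$, $r=\infty$, as in \cite{DKGNS13}). Substituting \eqref{eq:like-norm} into \eqref{eq:defFabs}, replacing each $\nu_j\in\{1{:}\alpha\}$ by $\alpha$ in the $\beta_j^{\nu_j}$ factors, summing over the inner multi-indices $\bstau_{\setu\setminus\setv}$, and dividing by $\gamma_\setu$ produces a cleanly factorized expression in the coordinates of $E$ and of $E^c$. This is the only place where the hybrid structure of \eqref{eq:like-norm} is used in the norm estimate, and the weight ansatz is designed precisely to absorb it, leaving a finite bound whenever $\Gamma_j$ and $\Lambda_j$ are chosen not too small compared with $\beta_j$.

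Next I would apply Theorem~\ref{thm:wce} and estimate the worst-case quantity $e_{s,\alpha,\bsgamma,r'}(\calS)$ via the Jensen-type inequality of \cite[Section~3]{DKGNS13}. For a suitable averaging exponent $\lambda\in(1/\alpha,1]$, the sum over the dual net in \eqref{def-B} is dominated by a coordinate-wise product, which in the hybrid setting splits into a product over $j\in\{1{:}s\}\cap E$ of factors $1+\Gamma_j^\lambda T_j$ and a product over $j\in\{1{:}s\}\cap E^c$ of SPOD-type factors involving $\Lambda_j$ and $\alpha$. A fast CBC construction then yields an interlaced polynomial lattice rule realizing this bound componentwise. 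Choosing $\Gamma_j\sim\beta_j$, $\Lambda_j\sim\beta_j$, $\lambda=p$, and $\alpha=\lfloor1/p\rfloor+1$, and using $\sum_j\beta_j^p<\infty$, balances the norm against the worst-case error and produces the dimension-independent rate $N^{-1/p}$ with the claimed constant $C_{\alpha,\bsbeta,b,p}$.

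For the complexity claim I would run the CBC in two successive phases. On the leading block $E$ the weights are of product form, so each coordinate update in the fast CBC of \cite{DiGo12} costs $\calO(N\log N)$, giving $\calO(\alpha\min\{s,J\}N\log N)$ in total. On the tail $E^c$ the SPOD-type weights are handled by the algorithm of \cite{Go13,DKGNS13} at cost $\calO(\alpha(s-J)_+N\log N)$ plus $\calO(\alpha^2(s-J)_+^2 N)$ update cost and $\calO(N+\alpha(s-J)_+N)$ memory, matching the claim. The main obstacle I anticipate is to show that a \emph{single} CBC procedure can genuinely concatenate these two phases --- i.e., that when optimizing the $j$-th component of the generating vector for $j\in E^c$, the already-fixed components with $i\in E$ can be incorporated without forcing the more expensive SPOD-type cost onto the first $J$ coordinates. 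The multiplicative decoupling of $e_{s,\alpha,\bsgamma,r'}(\calS)$ established above is precisely what makes this possible; working it out in detail is the content of Section~\ref{sec:HybrCBC}.
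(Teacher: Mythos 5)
Your overall strategy -- bound the $\calW_{s,\alpha,\bsgamma,q,r}$-norm of $G$ using \eqref{eq:like-norm}, choose $\bsgamma$ to absorb that bound, and then invoke the CBC/Jensen machinery of \cite{DKGNS13} with $\lambda=p$ -- is exactly the paper's route. But your weight ansatz contains a genuine error that breaks the first step. You set
$\gamma_\setu = \bigl(\prod_{j\in\setu\cap E}\Gamma_j\bigr)\,|\setu\cap E^c|!\,\prod_{j\in\setu\cap E^c}\Lambda_j$
and call the second factor ``of SPOD type''; it is in fact a POD weight (the factorial is of the \emph{cardinality} $|\setu\cap E^c|$, not of the total derivative order). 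When you substitute \eqref{eq:like-norm} into \eqref{eq:defFabs}, the sum over $\setv\subseteq\setu$ and $\bstau_{\setu\setminus\setv}\in\{1:\alpha\}^{|\setu\setminus\setv|}$ produces, among others, the term with every coordinate of $\setu\cap E^c$ differentiated to order $\alpha$, which carries the factor $(\alpha\,|\setu\cap E^c|)!$. With $k=|\setu\cap E^c|$, the ratio $(\alpha k)!/(k!\prod_{j}\Lambda_j)$ grows like $k^{(\alpha-1)k}$ for any choice of per-coordinate constants $\Lambda_j$ (even $\Lambda_j\sim\beta_j^\alpha$), so $\max_{\setu\subseteq\{1:s\}}\gamma_\setu^{-1}(\cdots)$ is unbounded as $s\to\infty$ and the norm of $G$ is not controlled uniformly in $s$. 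This is precisely why SPOD weights were introduced for $\alpha\ge 2$: a POD structure cannot absorb derivative bounds that allow each coordinate order to run up to $\alpha$.

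The paper's choice keeps the factorial of the order \emph{inside} a sum over all orders,
\begin{equation*}
 \gamma_\setu \,=\, \sum_{\bsnu_\setu \in \{1:\alpha\}^{|\setu|}}
 \bsnu_{\setu \cap E}!\;|\bsnu_{\setu \cap E^c}|!\,
 \prod_{j\in\setu} \Bigl(2^{\delta(\nu_j,\alpha)}\beta_j^{\nu_j}\Bigr),
\end{equation*}
which by construction makes $\|G\|_{s,\alpha,\bsgamma,q,\infty}\le c(G)$ with $r=\infty$, and still factorizes into a product part over $\setu\cap E$ and an SPOD part over $\setu\cap E^c$ (so your two-phase CBC picture survives). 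With this replacement the remainder of your argument -- Jensen's inequality with $\lambda=p$, $\alpha=\lfloor 1/p\rfloor+1$, summability of $\sum_j\beta_j^p$, and the hybrid product/SPOD CBC cost accounting -- goes through as in \cite[Theorem 5.3]{DKGNS13} and Section~\ref{sec:HybrCBC}. As written, however, the proposal does not prove the dimension-independent bound \eqref{thebound}.
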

\begin{proof}

For a function $G$ satisfying \eqref{eq:like-norm}, 
its norm \eqref{eq:defFabs} with $r=\infty$ and for any $q$, 
can be bounded by
\begin{align*}
 \|G\|_{s,\alpha,\bsgamma,q,\infty}
 &\,\le\, c
 \max_{\setu\subseteq\{1:s\}}
 \gamma_\setu^{-1}
 \sum_{\bsnu_\setu \in \{1:\alpha\}^{|\setu|}}
 \bsnu_{\setu \cap E}!\,
 \prod_{j\in\setu\cap E} \left(2^{\delta(\nu_j,\alpha)}\beta_j^{\nu_j}\right)\;
 |\bsnu_{\setu \cap {E^c}}|!\,
 \prod_{j\in\setu \cap {E^c}} \left(2^{\delta(\nu_j,\alpha)}\beta_j^{\nu_j}\right)\;\\
 &\,=\, c(G)
  \max_{\setu\subseteq\{1:s\}}
 \gamma_\setu^{-1}
 \sum_{\bsnu_\setu \in \{1:\alpha\}^{|\setu|}}
 \bsnu_{\setu \cap E}!\,
 |\bsnu_{\setu \cap {E^c}}|!\,
 \prod_{j\in\setu} \left(2^{\delta(\nu_j,\alpha)}\beta_j^{\nu_j}\right)\;,
\end{align*}
where $\delta(\nu_j,\alpha)$ is $1$ if $\nu_j=\alpha$ and is $0$
otherwise. To make  $\|G\|_{s,\alpha,\bsgamma,q,\infty} \le c$, 
we choose
\begin{equation}\label{equ:hybridWeight}
 \gamma_\setu :=  \sum_{\bsnu_\setu \in \{1:\alpha\}^{|\setu|}}
 \bsnu_{\setu \cap E}!\,
 |\bsnu_{\setu \cap {E^c}}|!\,
 \prod_{j\in\setu} \left(2^{\delta(\nu_j,\alpha)}\beta_j^{\nu_j}\right).
\end{equation}

With that, we can apply \cite[Theorem 5.3]{DKGNS13} to get the estimate \eqref{thebound}.
We remark that when $E = \emptyset$, we recover the case of SPOD weights 
as in equation (3.17) in \cite{DKGNS13}.
\end{proof}
\begin{remark}
A more precise bound \eqref{thebound} with an explicit constant $C_{\alpha,\bsbeta, b, p}$ 
is given in Eq.~\eqref{bound_with_constant} below.
\end{remark}
\begin{remark}\label{rem_interval_change}
Notice that the bound \eqref{eq:like-norm} was shown in Theorem~\ref{thm:DsiboundC} 
for functions defined on $[-1,1]^{\mathbb{N}}$, 
whereas now we use (the dimension truncated version) $[0,1]^s$. 
The change from $[-1,1]$ to $[0,1]$ can be achieved by the simple 
linear transformation $y \mapsto (y+1)/2$. 
Using \eqref{eq:HybBd} together with this change of variable in Proposition~\ref{prop:main1} 
increases the constant in \eqref{eq:Cab} by a factor of at most $2^\alpha$. 
Thus, in order for the theory to apply to the integrands from 
Sections~\ref{sec:HolOpEq} and \ref{sec:anadepsol}, 
we need to multiply $C_{\alpha, b}$ in \eqref{eq:Cab} by $2^\alpha$. 
In other words we need to replace 
$C_{\alpha, b}$ by $C'_{\alpha, b} = 2^\alpha C_{\alpha, b}$.
\end{remark}

\subsection{Combined error bound}
\label{sec:CombErrBd}
From the error bound in Theorem \ref{thm:trunc} 
on the impact of dimension truncation, the QMC integration error bound in
Proposition \ref{prop:main1}, 
and from the properties \eqref{eq:quasiopt} and \eqref{eq:convrate}
    of the (Petrov-)Galerkin projection \eqref{eq:parmOpEqh_trun}
we obtain
\begin{theorem} \label{thm:Combin}
Consider the nonlinear, parametric operator equation 
\eqref{eq:main} under the assumptions made 
in Section \ref{sec:OpEqUncInp},
and under Assumption \ref{asmp:psum}
on $p$-summability \eqref{eq:psumpsi} and 
the decreasing arrangement \eqref{eq:ordered}
of the sequence $\bsb$.
If the approximation property \eqref{eq:apprprop} 
holds, and if the parametric 
solutions $q(\bsy)$ 
of the problems \eqref{eq:paraOpEq}
are uniformly $\bcX_t$-regular
in the sense that 
there exists $C(F,t)<\infty$ such that
\be\label{eq:qRegt}
\sup_{\bsy\in U} \| q(\bsy) \|_{\bcX_t} \leq C(F,t) <\infty 
\;,
\ee
then for the QMC-integrated, (Petrov-)Galerkin-approximated 
responses $Q_{N,s}(\LinFun(q^s_h))$ of the parametric 
(Petrov-)Galerkin approximations $q^s_h(\bsy) \in \bcX^h$ 
defined in \eqref{eq:parmOpEqh_trun}, 
there holds the error bound
$$
\left| I(\LinFun(q(\cdot))) - Q_{N,s}(\LinFun(q^s_h)) \right|
\leq 
C_1
(N^{-1/p} + h^t + s^{-(1/p-1)})
\;.
$$
Here, the constant $C_1>0$ is independent of $N$, $h$ and of $s$.
\end{theorem}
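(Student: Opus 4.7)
The plan is to decompose the total error by a triangle inequality into three parts --- dimension truncation, Petrov--Galerkin discretization, and QMC quadrature --- and to bound each in turn using results from the earlier sections. First I observe that, since $q^s(\bsy) = q(\bsy_{\{1:s\}})$ depends only on the first $s$ coordinates, the integral of $\LinFun(q^s(\cdot))$ over $U$ reduces to an $s$-dimensional integral, which I denote $I_s(\LinFun(q^s))$. I then write
\[
I(\LinFun(q)) - Q_{N,s}(\LinFun(q^s_h)) = T_1 + T_2 + T_3,
\]
with $T_1 := I(\LinFun(q)) - I(\LinFun(q^s))$, $T_2 := I_s(\LinFun(q^s)) - I_s(\LinFun(q^s_h))$, and $T_3 := I_s(\LinFun(q^s_h)) - Q_{N,s}(\LinFun(q^s_h))$.

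For $T_1$, Theorem~\ref{thm:trunc} supplies the bound $|T_1| \leq \tilde{C}\, s^{-(1/p-1)}$ directly from $\bsb\in\ell^p(\bbN)$ and the monotonicity of $\bsb$. For $T_2$, Corollary~\ref{coro:DimTrcStab} combined with the regularity assumption \eqref{eq:qRegt} and the approximation property \eqref{eq:apprprop} yields $\sup_{\bsy\in U}\| q^s(\bsy) - q^s_h(\bsy)\|_{\bcX} \lesssim h^t$ uniformly in $s$; continuity of $\LinFun\in\bcX'$ followed by integration over $[0,1]^s$ then gives $|T_2|\lesssim h^t$.

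The decisive step is controlling $T_3$. The plan is to verify that $G_{s,h}(\bsy) := \LinFun(q^s_h(\bsy))$ satisfies the hybrid derivative bound \eqref{eq:like-norm} uniformly in both $s$ and $h$, and then invoke Proposition~\ref{prop:main1}. To obtain \eqref{eq:like-norm} I would repeat the reasoning of Theorem~\ref{thm:AnalyticLaxMilgram} at the discrete level: the uniform discrete inf-sup conditions \eqref{eq:Bhinfsup1}--\eqref{eq:Bhinfsup2}, together with the Lipschitz condition \eqref{eq:LipDqR}, allow the implicit function theorem / Neumann series argument to be run with the discrete differential $D_q\cR(\bsz;\cdot)|_{\bcX^h}$ on polytubes $\cT_{\bsrho}$ from \eqref{eq:TubeDef}. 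This should produce a holomorphic extension of $\bsy\mapsto q^s_h(\bsy)$ that is $(\bsb,p,\eps)$-holomorphic with $\bsb$, $p$, and $\eps$ inherited from the continuous problem. Theorem~\ref{thm:DsiboundC} then delivers \eqref{eq:HybBd} with a sequence $\bsbeta\in\ell^p(\bbN)$ and splitting $\bbN = E\cup E^c$ independent of $h$ and of $s$; linearity and continuity of $\LinFun$ transfer the bound to $G_{s,h}$. Proposition~\ref{prop:main1} then delivers $|T_3| \le C N^{-1/p}$ via a fast CBC-constructed interlaced polynomial lattice rule of order $\alpha = \lfloor 1/p\rfloor+1$, with a constant independent of $s$ and $N$.

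The main obstacle is precisely this $h$- and $s$-uniformity: one must ensure that the polytubes $\cT_{\bsrho}$ on which the exact parametric residual admits a boundedly invertible differential can also be used for the discrete residual, with bounds on the extension $q^s_h(\bsz)$ that do not deteriorate as $h\downarrow 0$. The discrete inf-sup stability \eqref{eq:Bhinfsup1}--\eqref{eq:Bhinfsup2} is precisely what supplies this uniformity, but verifying that the same $\eps$ controls the admissible poly-radii in \eqref{eq:rho_b} at the discrete level --- so that $J(\bsb,\eps)$ in \eqref{eq:defS} and the sequence $\bsbeta$ are inherited unchanged --- requires a careful perturbation argument. Once this is in place, summing the three bounds on $T_1$, $T_2$, and $T_3$ yields the estimate with a single constant $C_1$ independent of $N$, $h$, and $s$.
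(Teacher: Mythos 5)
Your decomposition is valid and the treatment of the first two terms matches the paper, but the third term is where you and the paper genuinely diverge. The paper splits the error as
\[
\left| I(\LinFun(q)) - I_s(\LinFun(q^s)) \right|
+ \left| I_s(\LinFun(q^s)) - Q_{N,s}(\LinFun(q^s)) \right|
+ \left| Q_{N,s}(\LinFun(q^s - q^s_h)) \right| ,
\]
i.e.\ it applies the QMC error bound of Proposition~\ref{prop:main1} to the \emph{exact} dimension-truncated solution $q^s(\bsy)=q(\bsy_{\{1:s\}})$, for which the derivative bounds of Theorem~\ref{thm:DsiboundC} are already available, and then disposes of the Petrov--Galerkin error through the elementary observation that $Q_{N,s}$ is an equal-weight average, so $|Q_{N,s}(g)|\le \sup_{\bsy}|g(\bsy)|$; the sup-norm PG estimate $\sup_{\bsy}\|q^s(\bsy)-q^s_h(\bsy)\|_{\bcX}\le C h^t$ from Corollary~\ref{coro:DimTrcStab} then finishes that term with no parametric regularity of $q^s_h$ required at all. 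You instead apply the QMC bound to the \emph{discrete} integrand $\LinFun(q^s_h(\cdot))$, which forces you to establish $(\bsb,p,\eps)$-holomorphy of $\bsy\mapsto q^s_h(\bsy)$ uniformly in $h$ and $s$ --- precisely the ``main obstacle'' you identify. That step is plausible (the paper itself asserts the uniform discrete holomorphy in Section~\ref{sec:HoQMCDigNet}, and the discrete inf-sup conditions \eqref{eq:Bhinfsup1}--\eqref{eq:Bhinfsup2} are the right ingredient), but you leave it as a sketch, and it is a nontrivial perturbation argument that the paper's arrangement of the triangle inequality is specifically designed to avoid. If you reorder your splitting so that the quadrature error hits $q^s$ rather than $q^s_h$, and the discretization error appears inside $Q_{N,s}$, the entire discrete-holomorphy issue disappears and the proof closes with only the results already proved in Sections~\ref{sec:HolOpEq}--\ref{sec:QMC}.
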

\begin{proof}
We write
$$
\begin{array}{rcl}
\left| I(\LinFun(q)) - Q_{N,s}(\LinFun(q^s_h)) \right|
&\leq& \displaystyle 
\left| I(\LinFun(q)) - I_s(\LinFun(q^s)) \right|
+
\left| I_s(\LinFun(q^s)) - Q_{N,s}(\LinFun(q^s)) \right| 
\\
& & \displaystyle
+
\left| Q_{N,s}(\LinFun(q^s - q^s_h)) \right|
\\
& =: & \displaystyle 
E_I + E_{II} + E_{III}
\;.
\end{array}
$$
The dimension truncation error $E_I$ 
is bounded by \eqref{eq:Idimtrunc} and \eqref{eq:DTbound} 
in Theorem \ref{thm:trunc}. 
Term $E_{II}$ is a QMC error which is bounded 
by Proposition \ref{prop:main1}; this Proposition is applicable  
based on Theorem \ref{thm:DsiboundC}, upon noting \eqref{eq:y1s}, i.e., 
that for finite truncation dimension $s$ the dimensionally truncated,
parametric solution $q^s(\bsy)$ can be interpreted as evaluation of
$q(\bsy)$ (to which Theorem \ref{thm:DsiboundC} applies) 
at the particular parameter value $\bsy := (\bsy_{\{1:s\}}, \bszero)$.
The last term $E_{III}$ is bounded using the equal weight property 
\eqref{eq:QNs} of $Q_{N,s}$ to infer
$$
E_{III}
\leq \| \LinFun \|_{\cX'} 
\sup_{\bsy_{\{1:s\}} \in [-1/2,1/2)^s} 
\| q^s(\bsy_{\{1:s\}}) - q^s_h(\bsy_{\{1:s\}}) \|_{\cX}
\leq 
\| \LinFun \|_{\cX'} 
\sup_{\bsy\in U} \| q(\bsy) - q_h(\bsy) \|_{\cX}
\;
$$
and the (Petrov-)Galerkin error 
$\sup_{\bsy \in U} \|q(\bsy) - q_h(\bsy)\|_{\cX} \le C h^t$.
\end{proof}
\section{Fast component-by-component construction}
\label{sec:HybrCBC}
Here, we outline, based on \cite{NC06a,DiGo12,DKGNS13}, a modification
of the fast CBC construction of the generating vector for the QMC
rule; while asymptotically, as $s\to\infty$, the complexity of this
construction equals that of the CBC construction 
for the SPOD weights in \cite{DKGNS13},
for finite, large values of the index $J$ in the proof of 
Theorem \ref{thm:DsiboundC} (which do occur in practical
situations as outlined in Remark \ref{remk:ChoicS}), 
we obtain quantitative advantages for the construction 
based on `` hybrid QMC-weights '', as outlined 
in what follows. 
We follow \cite{DKGNS13} closely in our exposition below.

As quadrature rule we use (interlaced) polynomial lattice
rules which are a special class of (higher order) digital nets, 
and which were introduced by Niederreiter, see \cite{Nie92}, see also \cite{DiPi10, LP14, Nuy13}.
We state the definition of these rules in the following. 
Let $b$ be a prime number, $\Z_b$ be the finite field with $b$ elements, 
$\Z_b[x]$ be the set of all polynomials with coefficients in $\Z_b$ and $\Z_b((x^{-1}))$ 
be the set of all formal Laurent series $\sum_{\ell = w}^\infty t_\ell x^{-\ell}$, 
where $w$ is an arbitrary integer and $t_\ell \in \Z_b$ for all $\ell$.
\begin{definition}[Polynomial lattice rules] \label{def_poly_lat}
For a prime $b$ and any $m \in \bbN$, let $P \in \Z_b[x]$ be an
irreducible polynomial with $\deg(P)= m$. 
For a given dimension $s\geq 1$, select $s$ polynomials
$q_1(x),\ldots,q_s(x)$ from the set
\begin{equation}\label{eq:Gbm}
   \Gc_{b, m} \,:=\, \{q(x) \in \Z_b[x] \setminus \{0\} \,:\, \text{deg}(q) < m\}\;,
\end{equation}
and write collectively
\begin{equation}\label{eq:GenVecq}
 \bsq \,=\, \bsq(x) \,=\, (q_1(x),\ldots,q_s(x)) \in
\Gc^s_{b, m}  \;.
\end{equation}
%
For each integer $0 \le n < b^m$, 
let $n = \eta_0 + \eta_1 b + \cdots + \eta_{m-1} b^{m-1}$ be the
$b$-adic expansion of $n$, and associate with $n$ the polynomial
\[
  n(x) = \sum_{r=0}^{m-1} \eta_r \, x^r  \in \Z_b[x]\;.
\]
Furthermore, we denote by $v_{m}$ the map from $\Z_b((x^{-1}))$ to the
interval $[0,1)$ defined for any integer $w$ by
\[
  v_{m}\left( \sum_{\ell=w}^\infty t_\ell\, x^{-\ell} \right) =
  \sum_{\ell=\max(1,w)}^m t_\ell \, b^{-\ell}\;.
\]
Then, the QMC point set $\calS_{P,b,m,s}(\bsq)$ of a (classical)
\emph{polynomial lattice rule} comprises the points
\[
  \bsy_n = \left( v_{m} \left( \frac{n(x) q_1(x)}{P(x)} \right),\ldots,
    v_{m} \left( \frac{n(x) q_s(x)}{P(x)} \right) \right) \in [0,1)^s,
    \quad n = 0, \ldots, b^m-1\;.
\]
\end{definition}

Interlaced polynomial lattice rules are special families of higher order digital nets \cite{D07, D08}. These quadrature rules were first studied in \cite{DiGo12, Go13, Go13b} since they yield faster CBC constructions.

\begin{definition}[Interlaced polynomial lattice rules] \label{int-poly}
Define the \emph{digit interlacing function}
with interlacing factor $\alpha \in \mathbb{N}$
by 
\begin{equation}\label{eq:DigIntl}
\begin{array}{rcl}
\mathscr{D}_\alpha: [0,1)^{\alpha} & \to & [0,1)
\\
(x_1,\ldots, x_{\alpha}) &\mapsto & \sum_{a=1}^\infty \sum_{j=1}^\alpha
\xi_{j,a} b^{-j - (a-1) \alpha}\;,
\end{array}
\end{equation}
where $x_j = \xi_{j,1} b^{-1} + \xi_{j,2} b^{-2} + \cdots$ for $1 \le j
\le \alpha$. We also define such a function for vectors by setting
\begin{equation} \label{eq:DigIntlAr}
\begin{array}{rcl}
\mathscr{D}_\alpha: [0,1)^{\alpha s} & \to & [0,1)^s
\\
(x_1,\ldots, x_{\alpha s})
&\mapsto &
(\mathscr{D}_\alpha(x_1,\ldots, x_\alpha),  \ldots,
\mathscr{D}_\alpha(x_{(s-1)\alpha +1},\ldots, x_{s \alpha}))
\;.
\end{array}
\end{equation}
Then, an \emph{interlaced polynomial lattice rule of order $\alpha$ with
$b^m$ points in $s$ dimensions} is a QMC rule using
$\mathscr{D}_\alpha(\calS_{P,b,m,\alpha s}(\bsq)) 
= \{ \mathscr{D}_{\alpha}(\bsy_n): n = 0, \ldots, b^m-1 \}$ 
as quadrature points,
for some given modulus $P$ and generating vector
$\bsq \in \Gc^{\alpha s}_{b, m}$.
\end{definition}


We have the following upper bound for the worst-case error
of interlaced polynomial lattice rules \cite[Section 3.2]{DKGNS13} 
\begin{equation} \label{eq:error-2}
  e_{s,\alpha,\bsgamma,1}(\calS)
  \,\le\,
  \sum_{\emptyset\neq \setv \subseteq\{1:\alpha s\}}
  (C'_{\alpha,b})^{|\setu(\setv)|}\, \gamma_{\setu(\setv)}\,b^{\alpha(\alpha-1)|\setu(\setv)|/2}
  \sum_{\bsell_\setv \in \calD_\setv^*}b^{-\alpha\mu_1(\bsell_\setv)}
  \;,
\end{equation}
where $\calD_\setv^*$ is the ``dual net without $0$ components''
defined in terms of the generating polynomials, 
see \cite[Eq. (3.28)]{DKGNS13} and where we replaced 
$C_{\alpha, b}$ by $C'_{\alpha, b}$.
Eq.~\eqref{eq:error-2} is derived from \eqref{def-B} 
by setting $r'=1$ and using interlaced polynomial lattice rules, 
see \cite{DKGNS13} for details.
Here, 
for a given set $\emptyset\ne\setv\subseteq\{1:\alpha s\}$, we define
\begin{equation} \label{u_of_v}
  \setu(\setv) \,:=\, \{\lceil j/\alpha \rceil: j \in \setv\} \,\subseteq\, \{1:s\}\;,
\end{equation}
where each element appears only once. The set
$\setu(\setv)$ can be viewed as an indicator on whether the set $\setv$
includes any element from each block of $\alpha$ components from
$\{1:\alpha s\}$.

Since we do not have a suitable expression for the worst-case error 
$e_{s, \alpha, \bsgamma, 1}$ we use the right-hand side of \eqref{eq:error-2} 
as our search criterion in the CBC construction instead. 
To simplify our notation, we define
\begin{equation} \label{eq:def-E}
  \cE_d(\bsq) \,:=\,
  \sum_{\emptyset \neq \setv \subseteq \{1:d\}} \widetilde{\gamma}_{\setv}
  \sum_{\bsell_\setv \in \calD_\setv^*}
  b^{- \alpha \mu_1(\bsell_\setv)}
\;.
\end{equation}
The case $d = \alpha s$ and the weights
\begin{equation} \label{eq:weight2}
  \widetilde{\gamma}_{\setv} \,:=\,
  (C'_{\alpha,b})^{|\setu(\setv)|}\, \gamma_{\setu(\setv)}\,b^{\alpha(\alpha-1)|\setu(\setv)|/2}\;
\end{equation}
are of particular interest for our purposes here. However, as shown in \cite{DKGNS13}, the theorem below holds for any $d$ and also for general weights $\widetilde{\gamma}_{\setv}$.

\begin{theorem}[CBC error bound]\label{thm_cbc}
Let $b \ge 2$ be prime, and $\alpha \ge 2$ and $m,d \ge 1$ be integers,
and let $P \in \Z_b[x]$ be an irreducible polynomial with $\deg(P) = m$.
Let $(\widetilde{\gamma}_{\setv})_{\setv\subseteq\{1:d\}}$ be positive
real numbers. Then a generating vector 
$\bsq^* = (1, q_2^*,
\ldots, q_d^*) \in \Gc^d_{b,m}$ can 
be constructed using a component-by-component
approach, minimizing $\cE_d(\bsq)$ in each step, such that
\begin{equation} \label{cond1}
 \cE_d(\bsq^*) \,\le\,
 \Bigg( \frac{2}{b^{m}-1}\sum_{\emptyset\ne\setv\subseteq\{1:d\}}
         \widetilde\gamma_{\setv}^{\lambda} \left(\frac{b-1}{b^{\alpha\lambda}-b}\right)^{|\setv|}
 \Bigg)^{1/\lambda}
 \quad\mbox{for all}\quad \lambda \in (1/\alpha, 1]
 \;.
\end{equation}
\end{theorem}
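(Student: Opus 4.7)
The plan is to proceed by induction on $k = 1, 2, \ldots, d$, constructing $q_k^*$ at step $k$ (given $q_1^* = 1, q_2^*, \ldots, q_{k-1}^*$) by minimizing $\cE_k(q_1^*, \ldots, q_{k-1}^*, q_k)$ over $q_k \in \Gc_{b,m}$, and showing
\begin{equation*}
\cE_k(\bsq^*_{\{1:k\}})^{\lambda} \leq \frac{2}{b^m - 1} \sum_{\emptyset \neq \setv \subseteq \{1:k\}} \widetilde{\gamma}_{\setv}^{\lambda} \left(\frac{b-1}{b^{\alpha\lambda}-b}\right)^{|\setv|}.
\end{equation*}
The base case $k=1$ is immediate since with $q_1^* = 1$ the only constraint defining $\calD^*_{\{1\}}$ becomes $\ell \equiv 0 \pmod P$, which has no nonzero solutions of degree less than $m$, so $\cE_1 = 0$.

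For the inductive step, the first key move is the decomposition $\cE_k(\bsq^*_{\{1:k\}}) = \cE_{k-1}(\bsq^*_{\{1:k-1\}}) + \theta_k(q_k^*)$, where $\theta_k$ collects precisely the terms indexed by $\setv \ni k$. Since $\cE_{k-1}$ does not depend on $q_k$, the minimizer $q_k^*$ of $\cE_k$ also minimizes $\theta_k$. Applying the elementary inequality $(a+b)^\lambda \leq a^\lambda + b^\lambda$ for $\lambda \in (0,1]$ gives $\cE_k^\lambda \leq \cE_{k-1}^\lambda + \theta_k(q_k^*)^\lambda$, and the first summand is controlled by the inductive hypothesis. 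For $\theta_k(q_k^*)^\lambda$, I would bound the minimum by the average over $q_k \in \Gc_{b,m}$, then push the exponent $\lambda$ inside the sums over $\setv$ and $\bsell_\setv$ by subadditivity, arriving at
\begin{equation*}
\theta_k(q_k^*)^\lambda \leq \sum_{\satop{\setv \subseteq \{1:k\}}{k \in \setv}} \widetilde{\gamma}_{\setv}^{\lambda} \cdot \frac{1}{b^m - 1} \sum_{q_k \in \Gc_{b,m}} \sum_{\bsell_\setv \in \calD^*_\setv(q_k)} b^{-\alpha\lambda\,\mu_1(\bsell_\setv)}.
\end{equation*}

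The heart of the proof, and the step I expect to be most delicate, is evaluating this inner average. For a fixed $\bsell_\setv$ with $\ell_k \neq 0$ and $\deg\ell_j < m$, the condition $\sum_{j \in \setv} \ell_j q_j \equiv 0 \pmod P$ defining $\calD^*_\setv$ becomes a linear equation in $q_k$ modulo the irreducible polynomial $P$. Because $\Z_b[x]/P$ is a field and $\ell_k$ is a nonzero element of degree less than $m$, this equation has exactly one solution in $\Z_b[x]/P$; counting over $\Gc_{b,m}$ yields at most one admissible $q_k$, so averaging gives a factor $1/(b^m-1)$. The remaining components of $\bsell_\setv$ are then independently summed using the geometric series identity
\begin{equation*}
\sum_{\satop{\ell \neq 0}{\deg \ell < m}} b^{-\alpha\lambda\,\mu_1(\ell)} \leq (b-1) \sum_{a \geq 1} b^{(1-\alpha\lambda)a - 1} = \frac{b-1}{b^{\alpha\lambda}-b},
\end{equation*}
which converges precisely because $\lambda > 1/\alpha$, producing one such factor per element of $\setv$. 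Combining the inductive bound on $\cE_{k-1}^\lambda$ (which contributes $2/(b^m-1)$ times the sum over $\setv \subseteq \{1:k-1\}$) with the new contribution $1/(b^m-1)$ times the sum over $\setv \ni k$ gives, after regrouping, the claimed bound at level $k$ with the uniform prefactor $2/(b^m-1)$, completing the induction and hence the proof.
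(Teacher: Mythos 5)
You follow the standard CBC induction (decompose $\cE_k = \cE_{k-1} + \theta_k$, bound the minimum by the average, push $\lambda$ inside by subadditivity, count solutions of the linear congruence), which is indeed the architecture of the proof in \cite{DKGNS13} that the paper cites for this theorem. However, there is a genuine gap, and it is exactly the point that produces the factor $2$ in \eqref{cond1}. The dual net $\calD_\setv^*$ here is \emph{not} indexed by nonzero polynomials of degree less than $m$: as in \eqref{dual} and \cite[Eq.~(3.28)]{DKGNS13}, its components $\ell_j$ range over all of $\bbN$, and the defining congruence $\sum_{j\in\setv}{\rm tr}_m(\ell_j)\,q_j \equiv 0 \pmod{P}$ involves only the first $m$ $b$-adic digits of $\ell_j$. (This is also why your ``geometric series identity'' evaluates to the infinite-sum value $\tfrac{b-1}{b^{\alpha\lambda}-b}$ even though you wrote the index set as $\deg\ell<m$.) Consequently, for those $\ell_k$ with ${\rm tr}_m(\ell_k)=0$, i.e.\ $b^m\mid \ell_k$, the congruence does not constrain $q_k$ at all: either every $q_k\in\Gc_{b,m}$ admits that $\bsell_\setv$ in the dual net or none does, and the averaging argument yields no factor $1/(b^m-1)$ for these terms. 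They must be bounded separately, using $\mu_1(\ell_k)\ge m+1$ so that their total contribution is $\le b^{-\alpha\lambda m}\tfrac{b-1}{b^{\alpha\lambda}-b}\le \tfrac{1}{b^m-1}\tfrac{b-1}{b^{\alpha\lambda}-b}$ per coordinate; adding this to the $1/(b^m-1)$ from the ``good'' terms is precisely what forces the prefactor $2/(b^m-1)$ rather than the $1/(b^m-1)$ your argument claims for the increment.

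The same oversight breaks your base case: with $q_1^*=1$ one has $\calD^*_{\{1\}}=\{\ell\in\bbN:\ b^m\mid\ell\}\neq\emptyset$, so $\cE_1=\widetilde\gamma_{\{1\}}\tfrac{b-1}{b^\alpha-b}\,b^{-\alpha m}>0$, not $0$. The claimed bound at $k=1$ still holds, but it requires the estimate $\bigl(\tfrac{b-1}{b^\alpha-b}\bigr)^\lambda b^{-\alpha\lambda m}\le \tfrac{2}{b^m-1}\,\tfrac{b-1}{b^{\alpha\lambda}-b}$ (via Jensen and $\alpha\lambda>1$), not the assertion that the dual net is empty. With these two repairs --- a correct base case and a split of the $q_k$-average into the cases ${\rm tr}_m(\ell_k)\neq 0$ and ${\rm tr}_m(\ell_k)=0$ --- your induction goes through and coincides with the cited proof.
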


It follows from Theorem~\ref{thm_cbc} that an interlaced polynomial lattice rule
with interlacing factor $\alpha$ in $s$ dimensions can be constructed using a CBC algorithm with weights \eqref{eq:weight2}, such that
\begin{align*}
  &e_{\alpha,\bsgamma,s,1}(\calS)
  \,\le\, \cE_{\alpha s}(\bsq^*) \\
  &\,\le\, \Bigg( \frac{2}{b^{m}-1}
  \sum_{\emptyset\ne\setv\subseteq\{1:\alpha s\}}
  \left((C'_{\alpha,b})^{|\setu(\setv)|}\, \gamma_{\setu(\setv)}\, b^{\alpha(\alpha-1)|\setu(\setv)|/2}\right)^\lambda
                                                                \left(\frac{b-1}{b^{\alpha\lambda}-b} \right)^{|\setv|}
  \Bigg)^{1/\lambda} \\
  &\,=\, \Bigg( \frac{2}{b^{m}-1}
  \sum_{\emptyset\ne\setu\subseteq\{1:s\}}
  \left( (C'_{\alpha,b})^{|\setu|}\, \gamma_{\setu}\, b^{\alpha(\alpha-1)|\setu|/2}\right)^\lambda
  \left(\left(1+\frac{b-1}{b^{\alpha\lambda}-b} \right)^\alpha-1\right)^{|\setu|}
  \Bigg)^{1/\lambda}\;. 
\end{align*}

By substituting in $\gamma_\setu$ from \eqref{equ:hybridWeight} and using Jensen's inequality, we get
\begin{align} \label{eq:want}
  e_{\alpha,\bsgamma,s,1}(\calS)
  &\,\le\, \Bigg( \frac{2}{b^{m}-1}
  \sum_{\emptyset\ne\setu\subseteq\{1:s\}}
  \sum_{\bsnu_\setu \in \{1:\alpha\}^{|\setu|}}
  (\bsnu_{\setu\cap E}! |\bsnu_{\setu\cap {E^c}}|!)^\lambda \,\prod_{j\in\setu}
  \left(B\,2^{\delta(\nu_j,\alpha)} \beta_j^{\nu_j}\right)^\lambda
  \Bigg)^{1/\lambda} \nonumber\\
  &\,=\, \Bigg( \frac{2}{b^{m}-1}
  \sum_{\bszero\ne\bsnu\in\{0:\alpha\}^s}
  (\bsnu_{E \cap \{1:s\}} ! |\bsnu_{{E^c}\cap\{1:s\}}|!)^\lambda \,\prod_{\satop{j=1}{\nu_j>0}}^s
  \left(B\,2^{\delta(\nu_j,\alpha)} \beta_j^{\nu_j}\right)^\lambda
  \Bigg)^{1/\lambda}\;,
\end{align}
where
\begin{align} \label{eq:B}
  B \,:=\, C'_{\alpha,b}\, b^{\alpha(\alpha-1)/2}
  \left(\left(1+\frac{b-1}{b^{\alpha\lambda}-b} \right)^\alpha-1\right)^{1/\lambda}\;.
\end{align}
We now show how we can choose $\lambda$ such that the sum in \eqref{eq:want} is
bounded independently of $s$. Let $\widetilde\beta_j :=
2\max(B,1)\beta_j$. Using the same argument as in \cite{DKGNS13}, the sum in \eqref{eq:want} 
is bounded by
\[
  \sum_{\bszero\ne\bsnu\in\{0:\alpha\}^s}
  \bigg(\bsnu_{E\cap\{1:s\}}!|\bsnu_{{E^c}\cap\{1:s\}}|! \,\prod_{j=1}^s
  \widetilde\beta_j^{\nu_j} \bigg)^\lambda\;,
\]
where each term in the sum to be raised to the power of $\lambda$ is of
the form
\begin{equation} \label{eq:form}
  \nu_1! \nu_2! \cdots \nu_J!
  (\nu_{J+1} + \nu_{J+2} + \cdots + \nu_s)!\,
  \underbrace{\widetilde\beta_1 \cdots \widetilde\beta_1}_{\nu_1}
  \underbrace{\widetilde\beta_2 \cdots \widetilde\beta_2}_{\nu_2}
  \cdots
  \underbrace{\widetilde\beta_s \cdots \widetilde\beta_s}_{\nu_s}\;,
\end{equation}
where for $s\le J$ we set $(\nu_{J+1}+ \nu_{J+2} + \ldots + \nu_s)!= 0! = 1$ and
$\nu_{s+1}! = \cdots = \nu_J! = 0! = 1$.

We now define a sequence $d_j := \widetilde\beta_{\lceil j/\alpha\rceil}$
so that $d_1 = \cdots = d_\alpha = \widetilde\beta_1$ and $d_{\alpha+1} =
\cdots = d_{2\alpha} = \widetilde\beta_2$, and so on. 
Then any term of the
form \eqref{eq:form} is bounded by a term of the form 
\[
\left( \prod_{j \in \setv \cap \alpha E}\alpha! d_j \right)  |\setv\cap \alpha {E^c}|!\, 
\prod_{j\in\setv \cap \alpha {E^c}} d_j
\]
for some finite subset of indices $\setv\subset\bbN$. 
As before, $E = \{1:J\}$ and we write
\[
\alpha E = \{1,2,\ldots,\alpha J \} \text{  and  }
\alpha {E^c} = \{\alpha J+1, \alpha J +2, \ldots, \}
\;.
\]
Thus we conclude that
\begin{align} \label{eq:last}
  \sum_{\bszero\ne\bsnu\in\{0:\alpha\}^s}
  \bigg(\bsnu_{E\cap\{1:s\}} ! \, |\bsnu_{{E^c}\cap\{1:s\}}|! \,\prod_{\satop{j=1}{\nu_j>0}}^s
  \widetilde\beta_j^{\nu_j} \bigg)^\lambda
  &\,{\leq} \, \sum_{\satop{\setv\subset\bbN}{|\setv|<\infty}}
  \bigg( \bigg( \prod_{j \in \setv \cap \alpha E}\alpha! d_j \bigg) |\setv\cap\alpha {E^c}|!\, \prod_{j\in\setv \cap \alpha {E^c}} d_j\bigg)^\lambda \nonumber\\
  &\,=\, \sum_{\setw \subset \alpha E} \sum_{\satop{\setu \subset \alpha {E^c}}{|\setu|<\infty}}
   \left( \left( \prod_{j \in \setw} \alpha! d_j \right)
   |\setu|! \prod_{j \in \setu} d_j\right)^{\lambda} \nonumber \\
  &\,=\,
  \sum_{\setw \subset \alpha E} 
  \left(\prod_{j \in \setw} \alpha! d_j \right)^{\lambda}
  \sum_{\satop{\setu \subset \alpha {E^c}}{|\setu| < \infty}}
  \left(|\setu|! \prod_{j \in \setu} d_j\right)^{\lambda}\nonumber
  \\
  &\,\le\, \prod_{j=1}^{\alpha J} (1 + (\alpha! d_j)^{\lambda})
  \sum_{\ell=0}^\infty (\ell!)^\lambda
  \sum_{\satop{\setu\subset\alpha {E^c}}{|\setu|=\ell}} \prod_{j\in\setu} d_j^\lambda
  \nonumber\\
  &\,\le\, \exp\left( (\alpha!)^\lambda 
        \sum_{j=1}^\infty d_j^{\lambda}\right)      
        \sum_{\ell=0}^\infty (\ell!)^{\lambda-1}
  \bigg(\sum_{j=1}^\infty d_j^\lambda\bigg)^\ell\;,
\end{align}
where in the last step we used the estimation $1 + x = \exp(\log (1+x)) \le \exp(x)$.

Note that $\sum_{j=1}^\infty \beta_j^p < \infty$ holds if and only if
$\sum_{j=1}^\infty d_j^p < \infty$. The last expression
in \eqref{eq:last} is finite for $p \le \lambda\le 1$.
The last expression in \eqref{eq:last} is also finite if $\lambda=1$ and $\sum_{j=1}^\infty d_j < 1$.
Since $\lambda$ also needs to satisfy $1/\alpha < \lambda\le 1$, we choose
\begin{equation}\label{lambda}
  \lambda \,=\, p \qquad\mbox{and}\qquad \alpha \,=\, \lfloor 1/p\rfloor +1\;,
\end{equation}
and for 
$p=1$ we assume additionally that $\sum_{j=1}^\infty d_j
< 1$,
which is equivalent to
\begin{equation} \label{eq:small}
  \sum_{j=1}^\infty \beta_j
  < \frac{1}{2\alpha\max(B,1)} \; .
\end{equation}
Thus with \eqref{lambda}, in \eqref{cond1}
we obtain a convergence of $\calO(N^{-1/p})$
 where $N=b^m$ with the implied constant 
bounded by \eqref{eq:last}, which is
independent of the dimension $s$. 
More precisely, we obtain that the integration error is bounded by (using $N = b^m$)
\begin{equation}\label{bound_with_constant}
\left( \frac{2}{N-1} \right)^{1/p} \left[ \exp\left( ((\lfloor 1/p \rfloor+1)!)^{p} \sum_{j=1}^\infty d_j^p \right) \sum_{\ell=0}^\infty (\ell!)^{p-1} \left( \sum_{j=1}^\infty d_j^p \right)^\ell \right]^{1/p},
\end{equation}
where $d_j = 2 \max(B, 1) \beta_{\lceil j/\alpha \rceil}$ and $B$ is given by \eqref{eq:B}. 
We have not tried to optimize the constant in \eqref{bound_with_constant} in terms of its dependence on $1/p$. Indeed, the expression in brackets in \eqref{bound_with_constant} grows at least of order $\exp(a^{1/p} )$, for some $a > 1$. (If one was mainly interested in the dependence of the constant on $1/p$, then \eqref{cond1} yields a dependence of order $\exp(c/p^2)$ for some $c > 0$.)

\subsection{Component-by-component algorithm} 
\label{sec:CBC}

A version of the component-by-component (or CBC) algorithm was first
proposed by Korobov \cite{Kor59} and rediscovered in \cite{SloRez02} in the context
of lattice rules for periodic functions. A version for deterministically shifted 
lattice rules in weighted spaces was proposed by \cite{SloKuoJoe02},
and the version for randomly shifted lattice rules was proposed in \cite{SloKuoJoe02a}.
Here we focus on the CBC algorithm for higher order interlaced polynomial
lattice rule as proposed in \cite{DiGo12, Go13, Go13b, DKGNS13}. 

We first derive a closed form expression for 
$\cE_d(\bsq)$ in \eqref{eq:def-E} which can be used for computation. 
Recall from Definition~\ref{def_poly_lat} that the
$j$-th coordinate of the $n$-th point of the interlaced polynomial lattice
point set is
\begin{equation*}
  y_j^{(n)} \,=\, \upsilon_m\left(\frac{n(x)\, q_j(x)}{P(x)} \right)\;.
\end{equation*}
Note that $y_j^{(n)}$ depends on the $j$-th component $q_j$ of the generating
vector. In the following we use results from \cite{DiPi05}. We have
\begin{equation*}
 \sum_{\bsell_\setv \in \calD_\setv^*} b^{- \alpha \mu_1(\bsell_\setv)}
 \,=\, \frac{1}{b^m} \sum_{n=0}^{b^m-1} \prod_{j \in \setv} \omega(y_j^{(n)})\;,
\end{equation*}
where $\bsy_{\setv}^{(n)} = (y_j^{(n)})_{j \in \setv}$ is the projection
of the $n$-th point $\bsy^{(n)}$ onto the coordinates in $\setv$, 
\begin{align*}
  \omega(y)
  \,=\, \frac{b-1}{b^\alpha-b} - b^{\lfloor \log_b y \rfloor (\alpha-1)}
  \frac{b^\alpha-1}{b^\alpha - b}\;,
\end{align*}
and where for $y=0$ we set $b^{\lfloor \log_b 0\rfloor (\alpha-1)} :=
0$. The last equality can be obtained by multiplying
\cite[Eq.~(2)]{DiPi05} by $b^{-\alpha}$.
Thus we have
\begin{equation} \label{eq:E-CBC}
  \cE_d(\bsq) \,=\,
  \frac{1}{b^m} \sum_{n=0}^{b^m-1} \sum_{\emptyset \neq \setv \subseteq \{1:d\}}
  \widetilde\gamma_\setv \prod_{j \in \setv} \omega(y_j^{(n)})\;.
\end{equation}

The CBC construction proceeds inductively on the dimension, keeping the components already calculated fixed and searching for the polynomial $q_d$ which minimizes $\cE_d$. To do so, we separate the terms in $\cE_d$ which depend on $q_d$ from those which do not depend on $q_d$. This depends on the particular form of the weights.

From \eqref{equ:hybridWeight} and \eqref{eq:weight2} we obtain hybrid weights
\[
  \widetilde\gamma_\setv =
 \sum_{\bsnu_{\setu(\setv)} \in \{1:\alpha\}^{|\setu(\setv)|}} 
 \bsnu_{\setu(\setv)\cap E} ! \,
 |\bsnu_{\setu(\setv)\cap {E^c}}|! 
 \prod_{j \in \setu(\setv)} \gamma_j(\nu_j)\,,
 \quad\mbox{with}\quad
 \gamma_j(\nu_j) :=
 C'_{\alpha,b}\, b^{\alpha(\alpha-1)/2}\, 2^{\delta(\nu_j,\alpha)}\beta_j^{\nu_j}.
\]
Substituting this into \eqref{eq:E-CBC} yields
\begin{align*}
  \cE_d(\bsq)
  &\,=\,
  \frac{1}{b^m} \sum_{n=0}^{b^m-1} \sum_{\emptyset \neq \setv \subseteq \{1:d\}}
  \sum_{\bsnu_{\setu(\setv)} \in \{1:\alpha\}^{|\setu(\setv)|}} 
 \bsnu_{\setu(\setv)\cap E}! \, |\bsnu_{\setu(\setv)\cap {E^c}}|!
  \bigg(\prod_{j \in \setu(\setv)} \gamma_j(\nu_j)\bigg)
  \bigg(\prod_{j \in \setv} \omega(y_j^{(n)})\bigg)\;.
\end{align*}
Every block of $\alpha$ components in the generating vector
$\bsq$ yields one component for the interlaced polynomial lattice rule.
In order to keep track of the block and position within each block, we replace the index $d$ by a double index $(s,t)$ such
that $s$ is the index for the block and $t$ is the index within the block,
that is, we set
\[
  s \,=\, \lceil d/\alpha\rceil
  \quad\mbox{and}\quad t \,=\, (d-1)\bmod \alpha + 1
  \quad\mbox{such that}\quad d \,=\, \alpha(s-1)+t\;.
\]
We now reorder the sums in $\cE_d(\bsq)$ according to $\bsnu =
(\nu_1,\ldots, \nu_s) \in \{0:\alpha\}^s$ and $\setv \subseteq \{1:d\}$ so
that the set $\setu(\setv)$ 
consists of the indices $j$ for which
$\nu_j > 0$.
This yields
\begin{align} \label{eq:Est}
  &\cE_{s,t}(\bsq)
  \,=\,
  \frac{1}{b^m} \sum_{n=0}^{b^m-1}
  \sum_{\satop{\bsnu \in \{0:\alpha\}^s}{|\bsnu|\ne 0}}
  \sum_{\satop{\setv \subseteq \{1:d\}\text{ s.t.}}
   {\setu(\setv) = \{1\le j\le s\,:\,\nu_j>0\}}} \bsnu_E! \, |\bsnu_{E^c}|!
  \bigg(\prod_{j \in \setu(\setv)} \gamma_j(\nu_j)\bigg)
      \bigg(\prod_{j \in \setv} \omega(y_j^{(n)})\bigg) \nonumber\\
  &\,=\,
  \frac{1}{b^m} \sum_{n=0}^{b^m-1} 
 (S_1(n,s,t) + S_2(n,s,t) + S_3(n,s,t)), 
\end{align}
where
\begin{align}
S_1(n,s,t) &:=     \sum_{\satop{\bsnu_{E_s} \in \{0:\alpha\}^{|E_s|}}{|\bsnu_{E_s}|\ne 0}}
  \sum_{\satop{\setv_1 \subseteq \{1:\min(d,\alpha J)\}\text{ s.t.}}
   {\setu(\setv_1) = \{1\le j\le \min(s,J)\,:\,\nu_j>0\}}} \bsnu_{E_s}! 
  \bigg(\prod_{j \in \setu(\setv_1)} \gamma_j(\nu_j)\bigg)
      \bigg(\prod_{j \in \setv_1} \omega(y_j^{(n)})\bigg), \label{def:S1}\\
S_2(n,s,t) &:= \sum_{\satop{\bsnu_{E^c_s} \in \{0:\alpha\}^{|E^c_s|}}{|\bsnu_{E^c_s}|\ne 0}}
  \sum_{\satop{\setv_2 \subseteq \{1+\min(d,\alpha J):d\}\text{ s.t.}}
   {\setu(\setv_2) = \{\min(s,J) < j\le s\,:\,\nu_j>0\}}} |\bsnu_{E^c_s}|! 
  \bigg(\prod_{j \in \setu(\setv_2)} \gamma_j(\nu_j)\bigg)
      \bigg(\prod_{j \in \setv_2} \omega(y_j^{(n)})\bigg),\label{def:S2}\\
S_3(n,s,t) &= S_1(n,s,t) \cdot S_2(n,s,t) \nonumber,
\end{align}
with $E_s := E \cap \{1:s\}$, $E^c_s := {E^c} \cap \{1:s\}$,  
$\nu_{E_s} = (\nu_j)_{{j\in E_s}}$ and $\nu_{E^c_s} = (\nu_j)_{j \in E^c_s}$.
For $s \le J$ we set $S_2(n,s,t) = 0$ and for $J = 0$ we set $S_1(n,s,t) = 0$.

We note that $S_1(n)$ has a product weight structure while $S_2(n)$ has
an SPOD weight structure. 
If $d > \alpha J$ then $S_1(n)$ is fixed and we need to compute $S_2(n)$ only.

For $d  =\alpha(s-1) + t\le \alpha J$ we have
\begin{align*}
  \cE_{s,t}(\bsq)
  &\,=\,
  \frac{1}{b^m} \sum_{n=0}^{b^m-1} 
  \sum_{ \satop{\bsnu \in \{0:\alpha\}^s}{|\bsnu|\ne 0} }
  \sum_{\satop{\emptyset \neq \setv_1 \subseteq \{1:\alpha(s-1)+t\} \text{ s.t.}}
   { \setu(\setv_1) = \{1\le j\le \min(s,J)\,:\,\nu_j>0\} } }
  \bigg(\prod_{j \in \setu(\setv_1)} \nu_j!\gamma_j(\nu_j)\bigg)
  \bigg(\prod_{j \in \setv_1} \omega(y_j^{(n)})\bigg) \\
  &\,=\,
  \frac{1}{b^m} \sum_{n=0}^{b^m-1} \sum_{\emptyset \neq \setu \subseteq \{1:s\}}
  \bigg(\prod_{j \in \setu} \sum_{\nu_j=1}^\alpha\nu_j!\gamma_j(\nu_j)\bigg)
   \sum_{\satop{\setv_1 \subseteq \{1:\alpha(s-1)+t\}}{\setu(\setv_1)=\setu}}
  \bigg(\prod_{j \in \setv_1} \omega(y_j^{(n)})\bigg)\;.
\end{align*}
Replacing $d$ by the double index $(s,t)$ as before, we obtain for
$t=\alpha$ that
\begin{align*}
  \cE_{s,\alpha}(\bsq)
  &\,=\,
  \frac{1}{b^m} \sum_{n=0}^{b^m-1} \underbrace{\prod_{j=1}^s
  \bigg[1 + \sum_{\nu_j=1}^\alpha 
     \nu_j!\gamma_j(\nu_j) \bigg(\prod_{i=1}^\alpha (1+ \omega(y_{j,i}^{(n)}) )- 1\bigg)\bigg] }_{=:\,Y_s(n)} - 1\;,
\end{align*}
where we defined the quantity $Y_s(n)$, with $Y_0(n) :=1$. For $t<\alpha$
we have
\begin{align*}
  \cE_{s,t}(\bsq)
  &\,=\,
  \frac{1}{b^m} \sum_{n=0}^{b^m-1}
  \bigg[1 + \sum_{\nu_s=1}^\alpha
\nu_s!\gamma_s(\nu_s) \bigg(\underbrace{\prod_{i=1}^t (1+ \omega(y_{s,i}^{(n)}))}_{=:\, V_{s,t}(n)}-1\bigg)\bigg]
  Y_{s-1}(n)
  - 1\;,
\end{align*}
where 
$V_{s,t}(n)$ is defined above. The part of
$\cE_{s,t}(\bsq)$ that is affected by $q_{s,t}$ is
\[
 \sum_{n=1}^{b^m-1}
 \omega(y_{s,t}^{(n)})\, V_{s,t-1}(n)\, Y_{s-1}(n)\;.
\]

In order to compute this quantity for every $q_{s,t}\in \Gc_{b,m}$ we need to perform the
matrix-vector multiplication using the matrix
\begin{equation*}
  \bsOmega \,:=\,
  \left[\omega\left(\upsilon_m\left(\frac{n(x) q(x)}{P(x)} \right)\right)
  \right]_{\satop{1\le n\le b^m-1}{q\in \Gc_{b,m}}}
\end{equation*}
and the vector $[V_{s,t-1}(n)\,Y_{s-1}(n)]_{1\le n\le b^m-1}$. A permutation can be applied to $\bsOmega$ using the so-called Rader transform (see, e.g., \cite{NC06a})) such that the fast Fourier transform can be used to carry out the matrix-vector multiplication. As shown in \cite{NC06a}, this reduces the cost of the matrix-vector multiplication to $\calO(M\,\log M) = \calO(N\,\log N)$ operations, where $M=b^m-1$ and $N = b^m$. 

Once $q_{s,t}$ has been computed for a given dimension, one has to update
the products $V_{s,t}(n)$. This can be done in $\calO(N)$ operations. After an entire block of $\alpha$ dimensions has been computed, the products $Y_{s}(n)$ need to be updated, which can be done in $\calO(N)$ operations. The total
computational cost is then $\calO(\alpha\, s\, N\,\log N)$ operations,
with a memory requirement of $\calO(N)$. 

When $d>\alpha J$  we have 
\[
 \cE_{s,t}(\bsq)
 \,=\,
 \frac{1}{b^m} \sum_{n=0}^{b^m-1}
 S_1(n,J,\alpha) + S_2(n,s,t)\cdot(1 + S_1(n,J,\alpha) ),
\]
where $S_1(n,J,\alpha)=Y_J(n)-1$. Thus $S_1(n,J,\alpha)$ has been
computed in the first part of the algorithm and is therefore now fixed.
When the final block is complete and therefore $t = \alpha$, we have
\begin{align} \label{eq:U}
 & S_2(n,s,\alpha) = 
 \sum_{\ell=1}^{\alpha (s-J)}
 \underbrace{
 \ell!
 \sum_{\satop{\bsnu \in \{0:\alpha\}^{s-J} }{|\bsnu|=\ell}}
 \prod_{\satop{j=J+1}{\nu_j>0}}^s \bigg[ \gamma_j(\nu_j)
 \bigg(\prod_{i = 1}^{\alpha} (1+\omega(y_{j,i}^{(n)})) -1 \bigg) \bigg]
 }_{=:\, U_{s,\ell}(n)}
\;,
\end{align}
where $\bsnu \in \{0:\alpha\}^{s-J}$ is given by
$\bsnu = (\nu_j)_{j\in \{J+1,J+2,\ldots,s\}}$ and where
we defined the quantity $U_{s,\ell}(n)$, with $U_{J,\ell}(n):=1$,
$U_{s,0}(n):=0$, and $U_{s,\ell}(n) :=0$ for $\ell>\alpha (s-J)$.
When the final block is incomplete, that is, $t < \alpha$, by
separating out the case $\nu_s = 0$ in \eqref{eq:Est}, we get
\begin{align*}
 & S_2(n,s,t)
 \,=\,
 \sum_{\ell=1}^{\alpha (s-1-J)}
 \ell!
 \sum_{\satop{\bsnu \in \{0:\alpha\}^{s-1-J} }{|\bsnu|=\ell}}
 \prod_{\satop{j=J+1}{\nu_j>0}}^{s-1} \bigg[ \gamma_j(\nu_j)
 \bigg(\prod_{i = 1}^{\alpha} (1+\omega(y_{j,i}^{(n)})) -1 \bigg) \bigg]
 \nonumber\\
 &\qquad\qquad\quad +
 \sum_{\ell=1}^{\alpha (s-J)}
 \sum_{\nu_s=1}^{\min(\alpha,\ell)}
 \ell!
 \sum_{\satop{\bsnu \in \{0:\alpha\}^{s-1-J} }{|\bsnu|=\ell-\nu_s}} \Bigg(
 \prod_{\satop{j=J+1}{\nu_j>0}}^{s-1} \bigg[ \gamma_j(\nu_j)
 \bigg(\prod_{i = 1}^\alpha (1+\omega(y_{j,i}^{(n)})) -1\bigg) \bigg] \nonumber\\
 &\qquad\qquad\qquad\qquad\qquad\qquad\qquad\qquad\qquad\qquad\times
 \gamma_s(\nu_s)
 \bigg(\prod_{i = 1}^t (1+\omega(y_{s,i}^{(n)})) -1\bigg) \Bigg)\;, \nonumber
\end{align*}
and thus
\begin{align}  \label{eq:VWX}
 &S_2(n,s,t)
 \,=\, S_2(n,s-1,\alpha) \\
 &\qquad + 
 \bigg(\underbrace{\prod_{i = 1}^t (1+\omega(y_{s,i}^{(n)}))}_{=:\,V_{s,t}(n)}-1 \bigg)
 \bigg(
 \underbrace{
 \sum_{\ell=1}^{\alpha (s-J)}
 \underbrace{
 \sum_{\nu_s=1}^{\min(\alpha,\ell)} \gamma_s(\nu_s)\frac{\ell!}{(\ell-\nu_s)!}\, U_{s-1,\ell-\nu_s}(n)
 }_{=:\, X_{s,\ell}(n)}
 }_{=:\, W_s(n)}
 \bigg)\;, \nonumber
\end{align}
where we defined 
$V_{s,t}(n)$, $W_{s}(n)$, and $X_{s,\ell}(n)$ as indicated, with
$V_{s,0}(n):=1$.

Since the polynomial $q_{s,t}$ only appears in the final factor of the
products $V_{s,t}(n)$, the only part of $\cE_{s,t}(\bsq)$ that is
affected by $q_{s,t}$ is
\[
 \sum_{n=1}^{b^m-1}
 \omega(y_{s,t}^{(n)})\, V_{s,t-1}(n)\, W_s(n)\,(1+S_1(n,J,\alpha))\;.
\]
Computing this quantity for every $q_{s,t}\in \Gc_{b,m}$ requires the
matrix-vector multiplication with the matrix $\bsOmega$
%
and the vector $[V_{s,t-1}(n)\,W_s(n)\,(1+S_1(n,J,\alpha))]_{1\le n\le b^m-1}$. 
Again, one can apply a permutation to the matrix $\bsOmega$ such that the fast 
Fourier transform can be used \cite{NC06a}. The cost is 
then $\calO(M\,\log M) = \calO(N\,\log N)$ operations, where $M=b^m-1$ and $N = b^m$. 

Once $q_{s,t}$ is chosen for dimension $\alpha (s-1) + t$, we update the products $V_{s,t}(n)$ using
\[
  V_{s,t}(n) \,=\, (1+\omega(y_{s,t}^{(n)}))\, V_{s,t-1}(n)\;.
\]
This requires $\calO(N)$ operations. After completing an entire
block of $\alpha$ dimensions, also the values $U_{s,\ell}(n)$ need to be updated. This can be done using the equation
\begin{align*}
 U_{s,\ell}(n)
 &\,=\,
 \ell!
 \sum_{\satop{\bsnu \in \{0:\alpha\}^{s-J-1} }{|\bsnu|=\ell}}
 \prod_{\satop{j=J+1}{\nu_j>0}}^{s-1} \bigg[ \gamma_j(\nu_j)
 \bigg(\prod_{i = 1}^{\alpha} (1+\omega(y_{j,i}^{(n)})) -1\bigg) \bigg] \\
 &\qquad
 +
 \ell! 
 \sum_{\nu_s=1}^{\min(\alpha,\ell)} \!\!
 \sum_{\satop{\bsnu \in \{0:\alpha\}^{s-J-1} }{|\bsnu|=\ell-\nu_s}} \!\!\!
 \Bigg(
 \prod_{\satop{j=J+1}{\nu_j>0}}^{s-1} \bigg[ \gamma_j(\nu_j)
 \bigg(\prod_{i = 1}^{\alpha} (1+\omega(y_{j,i}^{(n)})) -1\bigg) \bigg] \\
 &\qquad\qquad\qquad\qquad\qquad\qquad\times
 \gamma_s(\nu_s)
 \bigg(\prod_{i = 1}^{\alpha} (1+\omega(y_{s,i}^{(n)}))-1 \bigg) \Bigg)\\
&\,=\, U_{s-1,\ell}(n) + (V_{s,\alpha}(n) -1)\,X_{s,\ell}(n)\;.
\end{align*}
Since the quantities $V_{s,\alpha}(n)$ and $X_{s,\ell}(n)$ 
can be pre-computed and stored, this update requires 
$\calO(\alpha\, (s-J)_+  N)$ operations,  where $(x)_+ = \max\{0, x\}$. 
In the next step, the products $V_{s+1,0}(n)$ need to be initialized by $1$
with $\calO(N)$ operations, and the quantities $W_{s+1}(n)$ and
$X_{s+1,\ell}(n)$ need to be computed, which can be done in $\calO(\alpha^2 (s-J)_+ N)$ operations. The algorithm then continuous the search in the new block.

We need to store the quantities $U_{s,\ell}(n)$, $V_{s,t}(n)$, $W_{s}(n)$,
and $X_{s,\ell}(n)$, which 
can be overwritten as we increase $s$ and $t$. Hence, the total memory
requirement is $\calO(\alpha\, s\, N)$.

The 
{\em total computational cost for the CBC construction up to {dimension} $\alpha s$}
is therefore bounded by
$$
\begin{array}{rl}
  \calO\left(\alpha\, \min\{s, J\} \, N \, \log N + \alpha^2 (s-J)_+  \, N\,\log N \right) 
& \mbox{ search cost, plus}  
  \\ 
  \calO(\alpha^2 (s-J)_+^2 N) & \mbox{update cost, plus} 
  \\ 
  \calO(N + \alpha (s-J)_+ N) & \mbox{memory cost}\;.
\end{array}
$$ 
Hence, 
for large values of $J$ (as may occur in practice,
cf. Remark \ref{remk:ChoicS}), and for higher orders $\alpha$
the product structure of the QMC weights up 
to dimension $J$, implied by \eqref{eq:HybBd},
imply quantitative advantages in the CBC construction.

We summarize the algorithm in Pseudocode~1 below;
there, $.*$ means element-wise multiplication. 
Note that $\bsU(\ell)$ for $\ell=0,\ldots, \alpha (s_{\max}-J)$, 
and $\bsV$, $\bsW$, $\bsX(\ell)$ for $\ell=1,\ldots, \alpha (s_{\max}-J)$, 
and ${\boldsymbol \cE}$ are all vectors of length $N-1$, 
while $\bsOmega^{\rm perm}$ denotes the permuted version of the
matrix $\bsOmega$. 
The vector ${\boldsymbol \cE}$ stores the values of $\cE_d$.

\begin{algorithm}[ht] 
\caption{(Fast CBC implementation for hybrid weights)}
\small
\begin{algorithmic}
  \State $\bsY := \bsone$
  \For{\bf $s$ from $1$ to $\min(J,s_{\max})$}
  \State $\bsV := \bsone$
  \For{\bf $t$ from $1$ to $\alpha$}
  \State
  ${\boldsymbol \cE} := \bsOmega^{{\rm perm}}\,(\bsV .\!* \bsY)$
  \Comment{compute -- { use FFT}}
  \State
  $q_{s,t} := {\rm argmin}_{q\in \Gc_{b,m}} \cE(q)$
  \Comment{select -- { pick the correct index}}
  \State
  $\bsV := \big(\bsone + \bsOmega^{{\rm perm}}(q_{s,t},:)\big) \,.\!*\,\bsV$
  \Comment{update products}
  \EndFor
  \State $\bsY := (\bsone + \sum_{\nu_j=1}^\alpha\nu_s!\gamma_s (\bsV - \bsone)) \,.\!*\, \bsY$
  \Comment{update products}
  \EndFor

\State
$\bsS_1 := \bsY - \bsone$

\If{$s_{\max} \le J$} \Return  \EndIf 

\State $\bsU(0) := \bsone$
  \State $\bsU(1:\alpha\, (s_{\max}-J)) := \bszero$
  \For{\bf $s$ from $J+1$ to $s_{\max}$}
  \State $\bsV := \bsone$
  \Comment{initialize products and sums}
  \State $\bsW := \bszero$
  \For{\bf $\ell$ from $1$ to $\alpha (s-J)$}
  \State $\bsX(\ell) := \bszero$
  \For{\bf $\nu$ from $1$ to $\min(\alpha,\ell)$}
  \State $\bsX(\ell) := \bsX(\ell) + \gamma_s(\nu) \displaystyle\frac{\ell!}{(\ell-\nu)!}\, \bsU(\ell - \nu)$
  \EndFor
  \State $\bsW := \bsW + \bsX(\ell)$
  \EndFor
  %
 \For{\bf $t$ from $1$ to $\alpha$}
  \State
  ${\boldsymbol \cE} := \bsOmega^{{\rm perm}}\,(\bsS_1 + (\bsone + \bsS_1).\!*\bsV .\!* \bsW)$
  \Comment{compute -- { use FFT}}
  \State
  $q_{s,t} := {\rm argmin}_{q\in \Gc_{b,m}} \cE(q)$
  \Comment{select -- { pick the correct index}}
  \State
  $\bsV := \big(\bsone + \bsOmega^{{\rm perm}}(q_{s,t},:)\big) \,.\!*\,\bsV$
  \Comment{update products}
  \EndFor
  %
  \For{\bf $\ell$ from $1$ to $\alpha (s-J)$}
  \Comment{update sums}
  \State $\bsU(\ell) := \bsU(\ell) + (\bsV - \bsone) \,.\!*\, \bsX(\ell)$
  \EndFor
  \EndFor
\end{algorithmic}
\end{algorithm}
\section{Conclusion}
\label{sec:Concl}
We have analyzed the convergence of a class of higher order
Quasi Monte-Carlo (HOQMC) quadrature methods for the approximate evaluation
of response-statistics of a class of nonlinear operator equations
subject to distributed uncertainty, corresponding (via an unconditional Schauder
basis) to infinite-dimensional, iterated integrals.
We showed that for operators with analytic dependence on the
uncertain input, the HOQMC quadratures achieve
convergence rates which are independent of the parameter dimension
and which are, in a sense, best possible for a given sparsity
measure of the parameter dependence.
The main result of the present paper,
Theorem \ref{thm:DsiboundC}, 
is of independent interest and has applications
beyond the QMC quadrature error analysis for 
parametric operator equations developed in the present
paper:
countably-parametric integrand 
functions with $(\bsb,p,\eps)$-holomorphic dependence on 
the components $y_j$ of the parameter vector $\bsy$
admit high order quasi Monte-Carlo quadratures with \emph{dimension-independent}
convergence rates of order $1/p$. As the proof
of Theorem \ref{thm:DsiboundC} involved analytic continuation,
analogous results hold also in other contexts, such as 
Bayesian inverse problems which will be considered in 
\cite{DLGCSInv}.
We point out that the high order quasi Monte-Carlo error bounds
in Proposition \ref{prop:main1} require only finite differentiability
of the integrand function with respect to the integration parameters;
therefore the present convergence analysis also applies
to classes of nonanalytic integrand functions $G(\bsy)$, 
even with finite smoothness,
as long as quantitative bounds on its derivatives are available 
that are explicit with respect to the dimension $s$ of the domain
of integration; we refer to \cite{HoSc12Wave} for an example.
Theorem \ref{thm:DsiboundC}
allows us to control derivatives of the integrand functions
of arbitrary order, with constants which are explicit in the
derivative order and independent of the dimension of the 
parameter space.
Applications of the presently proposed,
combined high order quasi Monte-Carlo quadrature with 
Petrov-Galerkin discretizations of the forward problems
to Bayesian inversion in uncertainty quantification 
will be considered in \cite{DLGCSInv}.
There, the posterior densities admit
an infinite-dimensional, parametric deterministic
representation which, as we show, ``inherits''
analyticity from the forward map 
(cp. also \cite{ScSt11,SS12,SS13} and the references there). 
Details on the extension of the present analysis to this problem
class will be available in \cite{DLGCSInv}, with Theorem \ref{thm:DsiboundC}
of the present paper taking again a key role.
Numerical tests confirming the results of the present paper and implementation details are provided in \cite{GSc14}.

In the present paper, we have confined the
analysis to the so-called single-level version of the HOQMC-PG
discretization, 
and assumed minimal regularity $G(\cdot)\in \bcX'$.
Based on the present results, {\em multilevel discretizations} 
can be designed which are more complicated but
which are expected to exhibit, in certain cases, 
superior performance (we refer to \cite{DKLS15}
for the analysis of a 
higher order, multilevel QMC-PG algorithm 
    in the particular case of affine-parametric, linear operators).
The analysis of such multilevel algorithms in the present 
general context, will likewise be presented elsewhere.

\end{document}